\documentclass[11pt,reqno]{amsart}

\usepackage[utf8]{inputenc}
\usepackage[T1]{fontenc}
\usepackage[english]{babel}
\usepackage{lmodern}
\usepackage{microtype}
\usepackage{mathtools}
\usepackage{amssymb}
\usepackage{xspace}
\usepackage{tikz-cd}
\usepackage[margin=1.15in]{geometry}
\usepackage[colorlinks=true,unicode,linkcolor=blue,citecolor=blue,urlcolor=blue]{hyperref}

\title[$\pi_*$-\'etale $\mathbb E_k$-algebras]{A rigidity theorem for $\pi_*$-\'etale $\mathbb E_k$-algebras}
\author{Xiansheng Li}
\date{}
\subjclass[2020]{Primary 55P43; Secondary 18N70, 13B40}
\keywords{Dirac rings, ring spectra, synthetic spectra, Goerss-Hopkins obstruction theory, \texorpdfstring{$\mathbb E_k$}{E_k}-algebras, \texorpdfstring{$\pi_*$}{pi-star}-\'etale morphisms}

\newcommand{\Z}{\mathbb{Z}}

\newcommand{\E}{\mathbb{E}}

\DeclareMathOperator{\op}{op}

\DeclareMathOperator{\Sp}{Sp}
\DeclareMathOperator{\Mod}{Mod}
\DeclareMathOperator{\BMod}{BMod}
\DeclareMathOperator{\Map}{Map}

\DeclareMathOperator{\LMod}{LMod}

\DeclareMathOperator{\Psh}{Psh}

\DeclareMathOperator{\Alg}{Alg}
\DeclareMathOperator{\CAlg}{CAlg}

\DeclareMathOperator{\Cat}{Cat}

\DeclareMathOperator{\Fun}{Fun}

\numberwithin{equation}{section}
\theoremstyle{plain}
\newtheorem{theorem}{Theorem}[section]
\newtheorem{lemma}[theorem]{Lemma}
\newtheorem{proposition}[theorem]{Proposition}
\newtheorem{cor}[theorem]{Corollary}

\theoremstyle{definition}
\newtheorem{construction}[theorem]{Construction}
\newtheorem{definition}[theorem]{Definition}
\newtheorem{example}[theorem]{Example}

\theoremstyle{remark}
\newtheorem*{rmk}{Remark}
\newtheorem*{wng}{Warning}

\begin{document}

\begin{abstract}
We prove a rigidity theorem for $\pi_*$-\'etale $\mathbb E_k$-algebras over an $\mathbb E_{k+1}$-ring spectrum: the category of $\pi_*$-\'etale extensions of an $\mathbb E_k$-algebra is identified with the ordinary category of \'etale Dirac algebras over its graded homotopy Dirac ring. The proof develops a relative Goerss-Hopkins type obstruction theory in synthetic spectra, including an $I$-complete version. As an application, the completed obstruction theory constructs the $I_n$-complete $\mathbb E_3$-$MU_{(p)}$-algebra realization of the Lubin-Tate theory, hence an $\mathbb E_4$-orientation $MU_{(p)}\to E_n$.
\end{abstract}

\maketitle

\section{Introduction}

\subsection{Dirac rings and homotopy theory}
A basic feature of cohomology is that, with coefficients in a commutative ring, the cohomology groups of a space form a graded-commutative ring \cite{hatcher2002algebraic}. The same phenomenon occurs for ring spectra: the homotopy groups of a commutative ring spectrum satisfy the Koszul sign rule
\[
xy=(-1)^{|x||y|}yx.
\]
Consequently, they are generally not commutative as ungraded rings, and constructions from ordinary commutative algebra must be adapted to the graded symmetric monoidal setting. Dirac rings, introduced by Hesselholt and Pstr\k{a}gowski in \cite{DG1}, provide a convenient framework for doing so.

\begin{definition}\label{defdirac}
    A Dirac ring is a commutative algebra object in the symmetric monoidal (1-)category $\operatorname{grAb} = \Fun(\Z,\operatorname{Ab})$ of graded abelian groups with the symmetry isomorphism $$x \otimes y \mapsto (-1)^{\operatorname{deg}(x)\operatorname{deg}(y)} y \otimes x.$$
\end{definition}

\begin{rmk}
    Recall that the category $\Fun(\Z,\Sp)$ of graded spectra inherits a symmetric monoidal structure from that of $\Sp$ via Day convolution (see \cite[2.2.6]{HA}). It also admits the Beilinson $t$-structure (see \cite[5.3]{thhpht}) such that the two structures are compatible with each other. The abelian category  $\operatorname{grAb}$ is therefore identified with the heart with respect to this $t$-structure and inherits a symmetric monoidal structure from that of $\Fun(\Z,\Sp)$.
\end{rmk}

\subsection{The main result}
First, we recall the notion of \'etaleness in ordinary commutative algebra:
\begin{definition}\label{defdiret}
    A map $f: A \rightarrow B$ of commutative rings is called an \'etale extension if
\begin{enumerate}
    \item The commutative ring $B$ is a finitely presented $A$-algebra.
    \item The map $f$ is formally \'etale, i.e., for any solid commutative square of commutative rings
    $$\begin{tikzcd}
	A & C \\
	B & {C/I}
	\arrow[from=1-1, to=1-2]
	\arrow["f"', from=1-1, to=2-1]
	\arrow[from=1-2, to=2-2]
	\arrow[dashed, from=2-1, to=1-2]
	\arrow[from=2-1, to=2-2]
\end{tikzcd},$$ where the right vertical map is the projection onto the quotient by a nilpotent ideal $I \subset C$, there exists a unique dotted arrow making the whole diagram commute.
\end{enumerate}
\end{definition}
In \cite[7.5.0.4, 7.5.1.4]{HA}, Jacob Lurie gave a generalized notion of \'etale extension to ring spectra:

\begin{definition}
A morphism $\phi: A \rightarrow B$ of $\mathbb{E}_k$-ring spectra is said to be $\pi_0$-\'etale if the following two conditions hold:
\begin{enumerate}
    \item The induced ring homomorphism $\pi_0(\phi) : \pi_0(A) \rightarrow \pi_0(B)$ is \'etale in the sense of Definition \ref{defdiret}.
    \item The map $\phi$ exhibits $B$ as a flat $A$-module. Namely, the associated map $\pi_n(A) \otimes_{\pi_0(A)} \pi_0(B) \rightarrow \pi_n(B)$ is an equivalence.
\end{enumerate}
\end{definition}

\begin{rmk}
    In \cite[7.5]{HA} the maps satisfying the above conditions are simply called  \'etale. However, to distinguish this notion of \'etaleness from ours (as we will see later), we shall call these maps $\pi_0$-\'etale instead.
\end{rmk}

Although $\pi_0$-'etale morphisms occur naturally, the condition is too restrictive to include several important examples. For instance, the inclusion of the periodic Adams summand into $p$-complete complex $K$-theory,
\[
L_p\longrightarrow KU_p,
\]
is a finite Galois extension, but it is not flat in the sense required for $\pi_0$-'etaleness. Its full graded homotopy-ring map is nevertheless \'etale in the Dirac sense introduced below.

\begin{definition}[Definition \ref{defet}]
    A morphism $f : A \rightarrow B$ of Dirac rings is called \'etale if it is finitely presented and for any solid commutative square of Dirac rings
    $$\begin{tikzcd}
	A & C \\
	B & {C/I}
	\arrow[from=1-1, to=1-2]
	\arrow["f"', from=1-1, to=2-1]
	\arrow[from=1-2, to=2-2]
	\arrow[dashed, from=2-1, to=1-2]
	\arrow[from=2-1, to=2-2]
\end{tikzcd},$$ where the right vertical map is the projection onto the quotient by a nilpotent ideal $I \subset C$, there exists a unique dotted arrow making the whole diagram commute.
\end{definition}

\begin{definition}[Definition \ref{defpi*et}]
Let $1 \leq k \leq \infty$.
\begin{enumerate}
    \item A map $\phi : A \rightarrow B$ of $\mathbb{E}_{k}$-ring spectra is called $\pi_{*}$-\'etale or a $\pi_{*}$-\'etale extension of $A$ if the induced map $\pi_*(\phi) : \pi_*(A) \rightarrow \pi_*(B)$
     of Dirac rings is \'etale.
     \item More generally, given an $\mathbb{E}_{k+1}$-ring spectrum $R$, a map $\phi : A \rightarrow B$ of $\mathbb{E}_{k}$-$R$-algebras is called $\pi_{*}$-\'etale if the underlying map of $\mathbb{E}_k$-ring spectra is $\pi_{*}$-\'etale.
\end{enumerate}
      Let $\Alg_{\mathbb{E}_{k}}(\LMod_{R})^{\pi_{*}\text{-\'et}}_{A/} \subset \Alg_{\mathbb{E}_{k}}(\LMod_{R})_{A/}$ denote the full subcategory spanned by $\pi_{*}$-\'etale extensions with source $A \in \Alg_{\mathbb{E}_{k}}(\LMod_{R})$.
\end{definition}

Lurie's $\pi_0$-\'etale morphisms have the pleasant property that they are determined by the underlying map on $\pi_0$. In fact, Lurie proved the following \'etale rigidity theorem:

\begin{theorem}[{\cite[7.5.0.6]{HA}}]\label{lurieetrig}
    The functor $\pi_0 : \LMod_{R} \rightarrow \LMod_{\pi_0(R)}(\operatorname{Ab})$ induces an equivalence of categories $\Alg_{\mathbb{E}_{k}}(\LMod_{R})^{\pi_{0}\text{-\'et}}_{A/} \xrightarrow{\simeq}  \CAlg_{\pi_{0}(A)}^{ \text{\'et}}(\operatorname{Ab})$, where the left-hand side denotes the full subcategory of $\Alg_{\mathbb{E}_{k}}(\LMod_{R})_{A/}$ spanned by $\pi_0$-\'etale morphisms and the right-hand side denotes the 1-category of \'etale $\pi_0(A)$-algebras.
\end{theorem}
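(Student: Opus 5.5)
The plan is to follow Lurie's strategy: establish full faithfulness of $\pi_0$ on $\pi_0$-\'etale extensions via a deformation-theoretic vanishing statement, then prove essential surjectivity by building the algebra as a limit up the Postnikov tower.

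\textbf{Full faithfulness.} Let $A\to B$ and $A\to C$ be $\pi_0$-\'etale. We must show that
\[
\Map_{A/}(B,C)\to \operatorname{Hom}_{\pi_0(A)}(\pi_0 B,\pi_0 C)
\]
is an equivalence onto a discrete set. First we reduce to the connective case. Replacing $A,B,C$ by their connective covers preserves $\pi_0$-\'etaleness, since flatness gives $\tau_{\ge 0}B\simeq \tau_{\ge0}A\otimes$-flat. Maps out of a connective object into $C$ factor through $\tau_{\ge0}C$, and the connective cover of an \'etale $A$-algebra can be recovered from the connective data by base change. Then we write $C\simeq\lim_n \tau_{\le n}C$. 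Each stage $\tau_{\le n}C\to\tau_{\le n-1}C$ is a square-zero extension by $\Sigma^n\pi_nC$, classified by an $\mathbb E_k$-derivation. The obstruction to lifting a map $B\to\tau_{\le n-1}C$, and the space of choices of lift, are controlled by $\Map(L^{(k)}_{B/A},\Sigma^{n+1}\pi_nC)$ and its loop space. Here $L^{(k)}_{B/A}$ is the relative $\mathbb E_k$-cotangent complex, taken in $B$-bimodules of type $\mathbb E_k$.

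The key input is the vanishing $L^{(k)}_{B/A}\simeq 0$. Since $B$ is flat over $A$, the base change $\pi_0 B\otimes_{\pi_0 A}$ computes the homotopy of $L_{B/A}$. Its bottom homotopy is the classical module of K\"ahler differentials $\Omega_{\pi_0B/\pi_0A}=0$. For higher homotopy we use the Hurewicz-type connectivity estimate of \cite[7.4.3]{HA} comparing $\mathbb E_k$ and $\mathbb E_\infty$ cotangent complexes. Combined with flatness, this lets us reduce to the discrete case, where \'etaleness gives vanishing of the full cotangent complex. Hence every stage is a trivial fibration and the mapping space is equivalent to the set of maps on $\tau_{\le0}$. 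Maps on $\tau_{\le0}$ of discrete $\mathbb E_k$-algebras are just ring maps, and these are commutative since the targets are commutative.

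\textbf{Essential surjectivity.} Given an \'etale $\pi_0(A)$-algebra $B_0$, we construct $B$ inductively. Begin with the discrete algebra $B_0$ over $\tau_{\le0}A$; the case $n=0$ uses that discrete $\mathbb E_k$-algebras over $\pi_0 A$ are just rings. Then lift $\tau_{\le n}A\to B_n$ along the square-zero extension $\tau_{\le n}A\to\tau_{\le n-1}A$ by pushing out: set $B_n=B_{n-1}\otimes_{\tau_{\le n-1}A}\tau_{\le n}A$. More precisely, we use that relative tensor products of $\mathbb E_k$-algebras over an $\mathbb E_{k+1}$-base remain $\mathbb E_k$-algebras, with the $\mathbb E_{k+1}$-structure on $R$ entering here. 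Finally, set $B=\lim B_n$ for $A$ connective, and base-change along $\tau_{\ge0}A\to A$ in general. Flatness holds by construction. The alternative deformation approach would be to lift through the extension using the vanishing of $L_{B_{n-1}/\tau_{\le n-1}A}$, which kills all obstructions.

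\textbf{Main obstacle.} The hardest step is the cotangent vanishing for $\mathbb E_k$ rather than $\mathbb E_\infty$ structures. It needs care because $\mathbb E_k$ operations on $\pi_0$ are only homotopy commutative, and the $\mathbb E_k$-cotangent complex lives in operadic modules. I would first prove $L^{(k)}_{B/A}\otimes$-vanishes via flatness reduction to $\pi_0$, using the comparison between $\mathbb E_k$ and $\mathbb E_\infty$ cotangent complexes of discrete rings in \cite[7.5.1]{HA}.
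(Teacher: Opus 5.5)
The paper does not reprove this statement. It cites \cite[7.5.0.6]{HA} and only summarizes Lurie's route: pass to connective covers, then induct up the Postnikov tower. That is the outline you follow, but two of your steps fail as written.

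First, your key input $L^{(k)}_{B/A}\simeq 0$ is false for $k=1$, which is exactly the setting of \cite[7.5.0.6]{HA} (\'etale $\E_1$-algebras over an $\E_2$-ring). By \cite[7.3.5.1]{HA}, $L^{(1)}_{B/A}\simeq\operatorname{fib}(B\otimes_A B\to B)$ as a $B$-bimodule. Its bottom homotopy group is therefore the kernel $I$ of $\pi_0B\otimes_{\pi_0A}\pi_0B\to\pi_0B$, not $I/I^2\cong\Omega_{\pi_0B/\pi_0A}$; for $\Z\to\Z\times\Z$ one gets $I\cong\Z^2\neq0$. \'Etaleness only makes $\pi_0B$ a direct factor of $\pi_0B\otimes_{\pi_0A}\pi_0B$ cut out by an idempotent. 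What follows is that $\Map(L^{(1)}_{B/A},\Sigma^mN)\simeq *$ for bimodules $N$ whose left and right actions agree, i.e.\ which factor through multiplication. This is the paper's Lemma~\ref{discretebimodvanishing}, via $B_0\otimes^{\mathbb L}_{B_0^e}I\simeq I/I^2\simeq\Omega^1=0$. So you must check that the coefficients $\pi_nC$ in your tower are such bimodules. That is where \'etaleness of the target is used (commutative $\pi_0C$, with $\pi_nC$ induced from the base), just as the Dirac hypothesis on the target is used in Proposition~\ref{ffness}. The point is not cosmetic: if $M$ is a $\Z\times\Z$-bimodule whose left and right actions factor through different projections, then $\Map_{\Alg_{\E_1}(\Mod_{\Z})}(\Z\times\Z,(\Z\times\Z)\oplus\Sigma M)$ has $\pi_1\cong M\neq0$ at the component lying over $\mathrm{id}_{\Z\times\Z}$. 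For $k\geq2$ the vanishing does hold, but your sketch does not prove it. The discrete input needed is vanishing of the $\E_k$-cotangent complex rather than the classical one, and reducing to $\pi_0$ presupposes that $\pi_0B$ is the $\E_k$-pushout $\pi_0A\amalg_AB$.

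Second, the construction $B_n=B_{n-1}\otimes_{\tau_{\le n-1}A}\tau_{\le n}A$ runs the wrong way. The map is $\tau_{\le n}A\to\tau_{\le n-1}A$, so base change only moves down the tower, and no pushout lifts $B_{n-1}$ up a square-zero extension. Moreover, $\tau_{\le n-1}A$ is only $\E_k$, so relative tensor products over it are not in general $\E_k$-algebras; the $\E_{k+1}$-structure lives on $R$, not on $A$. Existence has to come from deformation theory, as in the paper's Lemma~\ref{relobjob}. The obstruction to lifting $\tau_{\le n-1}A\to B_{n-1}$ lies in $\pi_0\Map(L^{(k)}_{B_{n-1}/\tau_{\le n-1}A},\Sigma^{n+2}(\pi_nA\otimes_{\pi_0A}\pi_0B))$. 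For $k=1$ its vanishing again needs the symmetric-bimodule argument above, not cotangent vanishing.

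The same $\E_k$-base objection applies to your final base change along $\tau_{\ge0}A\to A$ and to the connective reduction in the full-faithfulness step. Both are fine when $A=R$. For a general $\E_k$-$R$-algebra $A$ you would have to analyze the $\E_k$-pushout $A\amalg_{\tau_{\ge0}A}(-)$; compare Lurie's Dunn-additivity argument for \cite[7.5.4.2]{HA}, mirrored in Appendix~\ref{app:dunn-additivity}. The paper's own $\pi_*$-argument avoids this issue altogether. Periodic objects of $\operatorname{Syn}_R$ already model all $R$-modules, so the potential-stage tower never passes through connective covers.
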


In this paper, we prove an analogue of the above theorem for $\pi_*$-\'etale morphisms:

\begin{theorem}[Theorem \ref{k+1}]
The functor $\pi_{*} : \LMod_{R} \rightarrow \LMod_{\pi_{*}(R)}(\operatorname{grAb})$ induces an equivalence of categories $\Alg_{\mathbb{E}_{k}}(\LMod_{R})^{\pi_{*}\text{-\'et}}_{A/} \xrightarrow{\simeq}  \CAlg_{\pi_{*}(A)}^{\text{\'et}}(\operatorname{grAb})$, where the right-hand side denotes the 1-category of \'etale Dirac $\pi_{*}(A)$-algebras.
\end{theorem}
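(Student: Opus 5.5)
The plan is to run the argument in the relative synthetic setting that the abstract advertises. We pass to $\Alg_{\mathbb{E}_k}$ in modules over the synthetic lift $\nu R$ of $R$, and then apply a Goerss--Hopkins type obstruction theory. In that theory the $E_2$-page is computed by Dirac-ring cotangent complexes. Étaleness of a map of Dirac rings should make every obstruction group and every ambiguity group vanish, exactly as flatness does in Lurie's proof of Theorem~\ref{lurieetrig}.

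Concretely, the steps in order are as follows.

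\textbf{Step 1 (algebraic vanishing).} For an étale map $\pi_*(A)\to B_0$ of Dirac rings, show that the graded cotangent complex $L_{B_0/\pi_*(A)}$ vanishes. Use the infinitesimal lifting property in Definition~\ref{defet} together with finite presentation. It follows that the André--Quillen cohomology groups
\[
\operatorname{Ext}^s_{B_0}(L_{B_0/\pi_*(A)}, M[t])
\]
vanish for all $s,t$ and all graded $B_0$-modules $M$. Because $R$ is only $\mathbb{E}_{k+1}$, we need the version adapted to $\mathbb{E}_k$-$\pi_*(A)$-algebras in the relevant heart. I expect this to reduce to the commutative Dirac cotangent complex, because the heart is a $1$-category in which the $\mathbb{E}_k$-operad acts through $\operatorname{Comm}$ for $k\ge 2$. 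The case $k=1$ needs separate care.

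\textbf{Step 2 (synthetic deformation).} Consider the $\mathbb{E}_k$-algebra $\nu A$ over $\nu R$, which can be formed because $\nu$ is lax monoidal. Its $\tau$-cofiber is the discrete object $\pi_*(A)$ in the heart of graded modules. Then build the tower of square-zero extensions
\[
\nu A/\tau^{n+1}\to \nu A/\tau^{n}.
\]
At each stage, the obstruction to lifting an algebra $B_n$ over $\nu A/\tau^n$, and the space of choices of lift, are controlled by the groups from Step~1 with coefficients in $B_0$. Since these groups vanish, the lifts exist uniquely and functorially. This yields an equivalence between étale $\mathbb{E}_k$-$\nu A/\tau$-algebras and étale Dirac $\pi_*(A)$-algebras. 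Taking the limit over $n$ and inverting $\tau$ produces the candidate inverse functor $B_0\mapsto \tau^{-1}\lim_n B_n$. The same vanishing, applied to mapping spaces between two such algebras, shows that these mapping spaces are discrete and equal to Hom-sets of Dirac rings. This gives full faithfulness.

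\textbf{Step 3 (comparison).} Check that for a $\pi_*$-étale $B$, the synthetic algebra $\nu B$ is $\tau$-complete and agrees with the lift constructed in Step~2. This is where étaleness enters again: $\pi_*(B)$ is flat over $\pi_*(A)$, because étale maps of Dirac rings are flat. This flatness gives a Künneth formula, so $\nu B\otimes_{\nu A}\nu C$ behaves well. Conversely, the realization of any lift must have homotopy equal to $B_0$, because the associated $\tau$-Bockstein spectral sequence collapses.

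\textbf{Main obstacle.} I expect the hardest step to be setting up the relative obstruction theory for $\mathbb{E}_k$-algebras over an $\mathbb{E}_{k+1}$ base in synthetic spectra. The key requirement is identifying each square-zero extension $\nu A/\tau^{n+1}\to\nu A/\tau^n$ as classified by an $\mathbb{E}_k$-derivation into $\pi_*(A)[n]$ with the correct shift. My first attempt would use Lurie's $\mathbb{E}_k$-cotangent complex formalism in $\LMod_{\nu R}$ and identify the relevant cohomology as Dirac André--Quillen cohomology via the heart. Convergence of the tower, and the comparison $\tau^{-1}\lim_n$ versus $\nu(-)$, should follow from standard $\tau$-completeness for synthetic objects built from flat homotopy.
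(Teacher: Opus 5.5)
Your overall architecture matches the paper's. You work in $\operatorname{Syn}_R$, climb the $\tau$-adic/potential-stage tower, express obstruction and ambiguity spaces as maps out of a relative cotangent complex with coefficients in shifts of $B_0$, kill them by \'etaleness, and pass to the limit. The genuine gap is in Step 1, which carries all of the \'etale-specific content.

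For $k\geq 2$, your reduction to the commutative Dirac cotangent complex is not valid. The square-zero extensions governing the tower live in the stable category $\mathcal D(\pi_*R)$, not in its heart. There $\mathbb{E}_k$-algebras are not commutative, so $\mathbb{E}_k$-Andr\'e--Quillen cohomology of a discrete ring differs in general from the commutative one. Also, the infinitesimal lifting property only sees discrete square-zero extensions, i.e.\ low-degree Ext, whereas the tower needs vanishing in every degree. The input actually needed is $\mathbb{L}^{\mathbb{E}_k}_{B_0/A_0}\simeq 0$ for an \'etale Dirac map. This is \cite[Proposition 4.35]{DG1}, applied after changing base from $\mathcal D(\pi_*R)$ to $\mathcal D(A_0)$, as in Proposition \ref{discretecotang}.

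For $k=1$, your blanket vanishing claim is false, not merely delicate. $\mathbb{L}^{\mathbb{E}_1}_{B_0/A_0}$ is the augmentation ideal $I=\ker(B_0\otimes_{A_0}B_0^{\op}\to B_0)$ viewed as a bimodule, which is nonzero, so Ext into arbitrary bimodules does not vanish. The missing idea (Lemma \ref{discretebimodvanishing}) is to use only coefficient bimodules whose structure factors through the multiplication $\mu$. For these, flatness of the diagonal and adjunction reduce to maps out of $B_0\otimes_{B_0^e}I\simeq I/I^2\simeq\Omega^1_{B_0/A_0}=0$. You then have to check that the coefficients that actually occur are of this form: $\Sigma^{n+2}B_0[-n]$ for objects and $\Sigma^{n+1}D_0[-n]$ for morphisms. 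They are, because $B_0\to D_0$ is a map of graded-commutative rings.

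Two further steps are asserted rather than supplied.

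First, because $A$ is only $\mathbb{E}_k$, its left modules are only $\mathbb{E}_{k-1}$-monoidal. So your ``$\mathbb{E}_k$-algebras over $\nu A/\tau^n$'' must be read as objects of the under-categories $\Alg_{\mathbb{E}_k}(\mathcal M_n)_{A_n/}$. The relative class is then assembled from the absolute obstruction for $B_{n-1}$ together with the nullhomotopy furnished by the fixed lift $A_n$. This uses the transitivity sequence $B_{n-1}\boxtimes_{A_{n-1}}\mathbb{L}^{\mathbb{E}_k}_{A_{n-1}}\to\mathbb{L}^{\mathbb{E}_k}_{B_{n-1}}\to\mathbb{L}^{\mathbb{E}_k}_{B_{n-1}/A_{n-1}}$ (Lemmas \ref{relobjob} and \ref{relmorob}).

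Second, these classes live in maps out of $\mathbb{L}^{\mathbb{E}_k}_{B_{n-1}/A_{n-1}}$, not $\mathbb{L}^{\mathbb{E}_k}_{B_0/A_0}$. Your phrase ``controlled by the groups from Step 1'' needs Lemma \ref{relativebasechange}. There you apply $\mathbf R_0\boxtimes_{\mathbf R_{n-1}}(-)$ to the transitivity sequence and use base-change compatibility of cotangent complexes. Adjunction then applies because the coefficient is a $B_0$-module.

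On the other hand, your Step 3 can be dropped. Potential $\infty$-stages are periodic by definition. Completeness of $\operatorname{Syn}_R$ and the equivalence $\operatorname{Syn}_R^{\mathrm{per}}\simeq\Mod_R$ identify the top of the tower with $\Alg_{\mathbb{E}_k}(\Mod_R)_{A/}$ and the projection to stage zero with $\pi_*$. No flatness, K\"unneth, or Bockstein argument is needed.
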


This is a graded, nonconnective generalization of Lurie's \'etale rigidity theorem for structured ring spectra \cite[Theorem 7.5.4.2]{HA}: the ordinary invariant $\pi_0$ is replaced by the full Dirac homotopy ring $\pi_*$.

\begin{rmk}
When $A \simeq R$ is the initial object in $\Alg_{\mathbb{E}_{k}}(\LMod{R})$, an $\mathbb{E}_{k}$-algebra $B$ is \'etale in the sense of \cite{DG1} if and only if the unit map $\eta : R \rightarrow B$ is $\pi_{*}$-\'etale in our sense. In this case, the theorem above is stated and proved in \cite[4.33]{DG1}. However, the statement of \cite{DG1} requires an arrow $A \to B$ on the left-hand side in order to exhibit $B$ as an $\E_k$-$A$-algebra. This condition usually fails, since $A$ need not admit an $\E_{k+1}$-structure. In this sense, our theorem is a genuine generalization of theirs.
\end{rmk}

\subsection{Outline of proof}\label{outline}
Lurie proved Theorem \ref{lurieetrig} (\cite[7.5.0.6]{HA}) by first passing to connective covers and then applying induction along the Postnikov tower from the case of $\pi_0$. We follow this idea, but the first obstruction we face is that $\pi_*$-\'etale extensions are not determined by their behavior on $\pi_0$.

The theory of synthetic modules (\cite[Section 6.5]{patchkoria2023adams}) provides the required replacement for the ordinary Postnikov tower.  For every $\mathbb E_{k+1}$-ring spectrum $R$, there is a complete prestable category $\operatorname{Syn}_R$ whose heart is the category of graded $\pi_*R$-modules and whose periodic objects recover $R$-modules in spectra.  Its potential-stage tower therefore interpolates directly between $\mathbb E_k$-$R$-algebras and graded algebraic data.

Building on \cite{pstragowski2021abstract}, we construct relative obstruction classes for both objects and morphisms under a fixed algebra $A$.  A base-change argument identifies every relative obstruction space at a finite stage with a mapping space out of the algebraic relative cotangent complex
\[
    \mathbb L^{\mathbb E_k}_{B_0/A_0}.
\]
For $k\geq2$ this complex vanishes for an \'etale Dirac map by \cite[Proposition 4.35]{DG1}; for $k=1$, the required mapping spaces vanish by the augmentation-ideal calculation $I/I^2\simeq\Omega^1_{B_0/A_0}=0$.  Hence every transition in the relative potential-stage tower is an equivalence.  This proves the theorem uniformly for all $1\leq k\leq\infty$, without passing through a separate change-of-rings or Dunn-additivity argument.  For comparison, Appendix~\ref{app:dunn-additivity} gives a second proof of the cases $k\geq2$, modeled on Lurie's proof of \cite[Theorem~7.5.4.2]{HA}.

\subsection{Notation and conventions}
 We regard the word category as the abbreviation of the word $\infty$-category; ordinary categories are called $1$-categories. All tensor products and mapping objects are derived unless explicitly stated otherwise. If $\mathcal D$ is a stable $\infty$-category equipped with a $t$-structure, its heart is denoted by $\mathcal D^\heartsuit$. We use the same superscript for internal objects in the heart, for example $\CAlg^\heartsuit$. Finally, we always use homological grading convention unless otherwise specified.

\section{The Goerss-Hopkins theory}
The abstract Goerss-Hopkins theory developed in \cite{pstragowski2021abstract} provides a powerful tool to study obstructions to realizing a graded commutative ring as the homotopy groups of a ring spectrum. In this section, we review these results, giving a tower of categories connecting the two categories we hope to identify. Then we give an algebraic description of the space of lifts.

\subsection{The prestable synthetic spectra}
In this section, we recall the theory of Grothendieck prestable categories, introduced in \cite[C]{SAG}.

\begin{definition}
    A category $\mathcal{C}$ is called a Grothendieck prestable category if there exists a stable category $\mathcal{D}$ with a $t$-structure $(\mathcal{D}_{\geq 0},\mathcal{D}_{\leq 0})$ compatible with filtered colimits and an equivalence $\mathcal{C} \xrightarrow{\simeq} \mathcal{D}_{\geq 0}$. In particular, $\mathcal{C}$ is additive and presentable.
\end{definition}

\begin{rmk}
    Given a Grothendieck prestable category $\mathcal{C}$, we can identify it with $\mathcal{D}_{\geq 0}$. Moreover, we may define the heart $\mathcal{C}^{\heartsuit}$ of $\mathcal{C}$ to be the full subcategory spanned by the discrete objects and identify it with $\mathcal{D}^{\heartsuit}$.
\end{rmk}

\begin{definition}
    Let $2 \leq k \leq \infty$. A \textit{grading} on an $\mathbb{E}_k$-monoidal Grothendieck prestable category $\mathcal{C}$ is a choice of a special autoequivalence denoted by $c \mapsto c[1]$ together with a natural equivalence $c[1] \otimes d \xrightarrow{\simeq} (c \otimes d)[1]$.
\end{definition}

\begin{rmk}
    The grading restricts to a self-equivalence on the heart $\mathcal{C}^{\heartsuit}$, making it a graded abelian category. Note that the grading in the heart is compatible with that in $\mathcal{C}$, i.e., we have a canonical equivalence $\pi_{i}(A[k]) \xrightarrow{\simeq} (\pi_{i}(A)) [k]$ in $\mathcal{C}^{\heartsuit}$.
\end{rmk}

\begin{wng}
    One should be careful to distinguish the above autoequivalence from the suspension functor. Of course, the latter may not be an equivalence in a Grothendieck prestable category.
\end{wng}

\begin{definition}
    A Grothendieck prestable category is said to be separated if taking homotopy groups detects equivalences. It is called complete if, in addition, every Postnikov tower converges.
\end{definition}

The following example plays a central role in our discussions. It appears in \cite[6.5]{patchkoria2023adams} and is reviewed by \cite[Proof of Theorem 4.33]{DG1}.

\begin{construction}\label{SynthR}
    Let $R$ be an $\mathbb{E}_{k+1}$-ring spectrum, where $1\leq k \leq \infty$. Let $\LMod^{\mathrm{ff}}_{R} \subset \LMod_R$ be the full subcategory spanned by finite sums of suspensions of $R$. We set $$\operatorname{Syn}_R = \Psh^{\Sigma}(\LMod^{\mathrm{ff}}_{R}, \mathcal{S})$$ to be the category of those presheaves on $\LMod^{\mathrm{ff}}_{R}$ that preserve finite products and call them synthetic $R$-modules.
\end{construction}

    The category $\operatorname{Syn}_{R}$ admits the following substantial features:
\begin{enumerate}
    \item The category $\operatorname{Syn}_{R}$ is a complete Grothendieck prestable category, so we may regard every object in $\operatorname{Syn}_{R}$ as a connective object in $\Sp(\operatorname{Syn}_{R})$.
    \item Via Day convolution, $\operatorname{Syn}_{R}$ inherits an $\mathbb{E}_k$-monoidal structure from the one on $\LMod^{\mathrm{ff}}_{R}$. The restricted Yoneda embedding functor $\nu: \LMod_{R} \rightarrow \operatorname{Syn}_{R}$ is thus $\mathbb{E}_k$-monoidal. The monoidal unit of $\operatorname{Syn}_{R}$ can be identified with $\nu R$.
    \item There exists a natural grading $X[1](T) \xrightarrow{\simeq} X(\Sigma^{-1} T)$ induced by suspension in $\LMod^{\mathrm{ff}}_{R}$, and there exists a canonical equivalence $\operatorname{Syn}_{R}^{\heartsuit} \xrightarrow{\simeq} \LMod_{\pi_{*}(R)}(\operatorname{grAb})$ such that $\pi_{0}(\nu A) \simeq \pi_{*}(A)$ as graded $\pi_{*}(R)$-modules.
\end{enumerate}

\subsection{Goerss-Hopkins tower}
In this section, we review Pstr\k{a}gowski-VanKoughnett's abstract Goerss-Hopkins obstruction theory.
We fix a graded symmetric monoidal, separated Grothendieck prestable category $\mathcal{C}$. For convenience, we may omit $\mathcal{C}$ whenever the context is clear. For example, given $A \in \Alg(\mathcal{C})$, we will write $\LMod_A$ for $\LMod_A(\mathcal{C})$.

\begin{definition}
    A shift algebra $A$ is an associative algebra $A \in \Alg(\mathcal{C})$ equipped with a map $\tau : \Sigma A[-1] \rightarrow A$ of right $A$-modules which induces an isomorphism $\pi_*(A) \xrightarrow{\simeq} \pi_{0}(A)[\tau]$ in $\mathcal{C}^{\heartsuit}$, where the latter is the graded algebra in $\mathcal{C}^{\heartsuit}$ given by $(\pi_{0}(A)[\tau])_k \simeq \pi_0(A)[-k]$.
\end{definition}

\begin{definition}
    Let $A$ be a shift algebra and $M$ be a left $A$-module. We say that $M$ is a periodic $A$-module if $\pi_*(M) \simeq \pi_*(A) \otimes_{\pi_0(A)} \pi_0(M)$. Let $\LMod_{A}^{per} \subset \LMod_{A}$ denote the full subcategory spanned by periodic modules.
\end{definition}

\begin{lemma}
    Let $A$ be a shift algebra and $M$ be a left $A$-module. $M$ is a periodic $A$-module if and only if $\pi_0(A) \otimes_A M$ lies in $\mathcal{C}^{\heartsuit}$.
\end{lemma}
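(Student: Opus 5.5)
The strategy is to use the cofiber sequence defining $\tau$ to relate $\pi_0(A)\otimes_A M$ to the action of $\tau$ on $\pi_*(M)$, and then to see that periodicity is exactly the statement that $\tau$ acts injectively on $\pi_*(M)$ with $\pi_0(M)$ as the only generators.

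First, since $\pi_*(A)\cong \pi_0(A)[\tau]$, multiplication by $\tau$ is injective on homotopy with cokernel $\pi_0(A)$ concentrated in degree $0$. Hence the cofiber of $\tau:\Sigma A[-1]\to A$, a cofiber of right $A$-modules, is identified with $\pi_0(A)$ regarded as a right $A$-module. Tensoring over $A$ with $M$ gives a cofiber sequence
\[
\Sigma M[-1]\xrightarrow{\tau} M \to \pi_0(A)\otimes_A M.
\]
Here I use $\Sigma A[-1]\otimes_A M\simeq \Sigma M[-1]$, which holds because the grading is compatible with the monoidal structure. We compute in $\Sp(\mathcal C)$, where this is a fiber sequence and gives a long exact sequence
\[
\cdots\to \pi_{n-1}(M)[-1]\xrightarrow{\tau}\pi_n(M)\to \pi_n(\pi_0(A)\otimes_A M)\to \pi_{n-2}(M)[-1]\xrightarrow{\tau}\pi_{n-1}(M)\to\cdots,
\]
using $\pi_n(\Sigma M[-1])\cong\pi_{n-1}(M)[-1]$. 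All objects are connective, so $\pi_0(A)\otimes_A M$ is connective. It is therefore discrete if and only if $\pi_n(\pi_0(A)\otimes_AM)=0$ for all $n\ge1$.

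By exactness, this vanishing holds if and only if two conditions hold. First, $\tau:\pi_{n-1}(M)[-1]\to\pi_n(M)$ must be surjective for $n\ge1$. Second, it must be injective for $n-1\ge0$, which comes from the terms $\pi_{n+1}(\pi_0(A)\otimes_AM)\to\pi_{n-1}(M)[-1]\to\pi_n(M)$ with $n\ge1$. Together these say that $\tau$ induces isomorphisms $\pi_{n-1}(M)[-1]\cong\pi_n(M)$ for all $n\ge1$. Iterating gives $\pi_n(M)\cong\pi_0(M)[-n]$ via $\tau^n$, which is precisely $\pi_*(M)\cong\pi_0(A)[\tau]\otimes_{\pi_0(A)}\pi_0(M)$.

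Conversely, if $M$ is periodic, the comparison map $\pi_*(A)\otimes_{\pi_0(A)}\pi_0(M)\to\pi_*(M)$ is an isomorphism of $\pi_*(A)$-modules. Then $\tau$ acts isomorphically in positive degrees, and the same exact sequence yields discreteness.

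The main point needing care is that the periodicity isomorphism is meant as the canonical $\pi_*(A)$-module map, so that ``$\tau$ acts by isomorphisms'' is equivalent to periodicity. An abstract isomorphism would not suffice. A second point is checking that the module action of $\tau$ on $\pi_*(M)$ agrees with the connecting maps in the long exact sequence above. I would handle this by noting that the map $\Sigma M[-1]\to M$ is $\tau\otimes_A M$, so on homotopy it is the action of $\tau\in\pi_1(A)[-1]$.
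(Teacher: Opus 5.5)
Your argument is correct and is essentially the standard proof. The paper gives no argument of its own and simply defers to \cite[2.16]{pstragowski2021abstract}, where one likewise tensors the cofiber sequence $\Sigma A[-1]\xrightarrow{\tau}A\to\pi_0(A)$ of right $A$-modules with $M$ and reads off from the long exact sequence that $\pi_0(A)\otimes_A M$ is discrete exactly when $\tau$ acts isomorphically on $\pi_*(M)$ in positive degrees.
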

\begin{proof}
    See \cite[2.16]{pstragowski2021abstract}.
\end{proof}

As the symmetric monoidal structure is compatible with the $t$-structure, the Postnikov truncation functor $\tau_{\leq n}$ is lax-symmetric monoidal. Hence, we can associate any associative algebra $A$ with a tower of associative algebras $$\dots \rightarrow A_n \rightarrow A_{n-1} \rightarrow \dots \rightarrow A_0$$ given by the Postnikov truncations $A_n \simeq \tau _{\leq n} A$. This tower induces a tower of Grothendieck prestable categories $$\dots \rightarrow \LMod_{A_{n}} \rightarrow \LMod_{A_{n-1}} \rightarrow \dots \rightarrow \LMod_{A_{0}}.$$ If, further, $\mathcal{C}$ is complete, then we have canonical equivalences $$A \xrightarrow{\simeq} \varprojlim A_{n}$$ and $$ \LMod_{A} \xrightarrow{\simeq} \varprojlim \LMod_{A_{n}}.$$

Before we continue, recall the notion of small extensions \cite[7.4.1]{HA}:
\begin{definition}\label{nsmall}
    Let $\mathcal{D}$ be a stable presentable category with compatible $\E_k$-monoidal structure and $t$-structure. Assume that the tensor unit $\mathbf{1} \in \mathcal{D}$ is connective.
    Let $f: A \rightarrow B$ be a map of $\E_k$-algebras in $\mathcal{D}$ and let $n \geq 0$. We say $f$ is an $n$-small extension if the following conditions are satisfied:
    \begin{enumerate}
        \item $A \in \mathcal{D}_{\geq 0}$ is connective and $\operatorname{fib}(f) \in \mathcal{D}_{\geq n} \cap \mathcal{D}_{\leq 2n}$.
        \item The multiplication map on the nonunital algebra $\operatorname{fib}(f) \otimes \operatorname{fib}(f) \rightarrow \operatorname{fib}(f)$ is nullhomotopic.
    \end{enumerate}
\end{definition}

Lurie proved that any $n$-small extension $f: A' \rightarrow A$ with fiber $M \in \Mod^{\E_k}_A$ is uniquely determined by a derivation $\eta: \mathbb{L}_{A}^{\mathbb{E}_k} \rightarrow \Sigma M$ in $\Mod^{\E_k}_A$:

\begin{theorem}[{\cite[7.4.1.1, 7.4.1.22, 7.4.1.26 ]{HA}}]
    Let $\mathcal{D}$ be as in Definition \ref{nsmall}. Let $\operatorname{Der}^{(k)} = \operatorname{Der}^{(k)}(\Alg_{\E_k}(\mathcal{D}))$ be the category of derivations $(A, \eta: \mathbb{L}_{A}^{\mathbb{E}_k} \rightarrow \Sigma M)$ in $\E_k$-algebras. There exists a functor
    $$\Phi^{(k)}: \operatorname{Der}^{(k)} \rightarrow \Fun(\Delta^{1}, \Alg_{\E_k}(\mathcal{D})),$$
    $$(A, \eta: \mathbb{L}_{A}^{\mathbb{E}_k} \rightarrow \Sigma M) \mapsto (A^{\eta} \rightarrow A).$$ Moreover, let $\operatorname{Der}^{(k)}_{n-sm} \subset \operatorname{Der}^{(k)}$ be the full subcategory spanned by those pairs $(A, \eta: \mathbb{L}_{A}^{\mathbb{E}_k} \rightarrow \Sigma M)$ for which $M \in \mathcal{D}_{[n,2n]}$. Then $\Phi^{(k)}$ induces a fully faithful functor
    $$\Phi^{(k)}: \operatorname{Der}^{(k)}_{n-sm} \rightarrow \Fun(\Delta^{1}, \Alg_{\E_k}(\mathcal{D})),$$
    whose (essential) image is exactly the full subcategory spanned by $n$-small extensions.
\end{theorem}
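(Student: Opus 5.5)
The statement is Lurie's theorem on square-zero extensions: $n$-small extensions correspond to derivations. It is cited from \cite[7.4.1.1, 7.4.1.22, 7.4.1.26]{HA}, so the proof I would give reconstructs Lurie's argument in three steps.

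\emph{Step 1: construct $\Phi^{(k)}$.} Let $\Sigma M$ be a module in $\Mod^{\E_k}_A$. Form the trivial square-zero extension $A\oplus \Sigma M$. A derivation $\eta$ corresponds, by the universal property of $\mathbb{L}^{\E_k}_A$, to a section $d_\eta : A \to A\oplus\Sigma M$ of the projection in $\Alg_{\E_k}(\mathcal D)_{/A}$. Define $A^\eta$ as the pullback of $d_\eta$ along the zero section $d_0$:
\[
A^\eta = A\times_{A\oplus\Sigma M} A.
\]
Then the fiber of $A^\eta\to A$ is $M$. To make $\Phi^{(k)}$ functorial in the pair $(A,\eta)$, I would realize $\operatorname{Der}^{(k)}$ as a full subcategory of the fibered category of pairs (algebra, section of a square-zero extension). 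The pullback is then a functorial limit construction; this is essentially the tangent-bundle formalism of \cite[7.3]{HA}.

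\emph{Step 2: image consists of $n$-small extensions, and full faithfulness.} Suppose $M\in\mathcal D_{[n,2n]}$ and $A$ is connective. Then $A^\eta$ is connective, since $n\ge0$. The fiber condition holds by Step 1. Nullity of the multiplication on the fiber follows because the construction is pulled back from $A\oplus\Sigma M$, where $\Sigma M$ squares to zero.

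For full faithfulness, I would compare mapping spaces. Maps $(A,\eta)\to(B,\theta)$ are maps $A\to B$ together with compatible module maps $M\to N$ intertwining the derivations. On the other side, maps $A^\eta\to B^\theta$ over a map $A\to B$ are computed using the pullback description and the adjunction for the cotangent complex. The connectivity bounds ($M$ is $n$-connective and $2n$-truncated) kill the higher corrections, so the two mapping spaces agree.

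\emph{Step 3: essential surjectivity.} This is the main obstacle. Given an $n$-small extension $f:\tilde A\to A$ with fiber $M$, we must produce $\eta$ with $\tilde A\simeq A^\eta$. Lurie's approach is to show that $f$ determines an $\E_k$-$A$-module structure on $\Sigma M$ together with a derivation. One uses the relative cotangent complex $\mathbb L_{A/\tilde A}$ and the natural map $\Sigma M\simeq \Sigma\operatorname{fib}(f)\to \mathbb L_{A/\tilde A}$. The key input is a connectivity estimate \cite[7.4.3.1]{HA}: this map is $(2n+2)$-connective, i.e.\ roughly $\tau_{\le 2n+1}\mathbb L_{A/\tilde A}\simeq \Sigma M$. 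That estimate needs the hypotheses $M\in\mathcal D_{[n,2n]}$ and $M\cdot M\simeq0$. Composing $\mathbb L_A\to\mathbb L_{A/\tilde A}\to\Sigma M$ gives $\eta$. Finally, the canonical comparison $\tilde A\to A^\eta$ is an equivalence, because it is a map over $A$ that induces an equivalence on fibers.

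I expect this connectivity estimate, which relies on the $\E_k$ Hurewicz-type bounds for the cotangent complex, to be the hardest part. I would import it directly from \cite[7.4.3]{HA} rather than reprove it.
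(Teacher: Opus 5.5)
The paper gives no proof of its own here; it only cites \cite{HA}. Your outline is the standard route: build $A^\eta$ as a pullback, then recognize $n$-small extensions through $\tau_{\le 2n+1}\mathbb L_{A/\tilde A}$. You are also right to require $A$ connective in $\operatorname{Der}^{(k)}_{n-sm}$, which the statement as printed omits.

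There is, however, a genuine gap exactly where the hypothesis $M\cdot M\simeq 0$ must be used. Suppose the cofiber of $\Sigma M\to\mathbb L_{A/\tilde A}$ is $(2n+2)$-connective. This only gives isomorphisms on $\pi_i$ for $i\le 2n$ and a \emph{surjection} on $\pi_{2n+1}$. It does not give $\tau_{\le 2n+1}\mathbb L_{A/\tilde A}\simeq\Sigma M$.

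The estimate \cite[7.4.3.1]{HA} is a pure connectivity statement with no multiplicative hypothesis. So it cannot be where nullity of the multiplication enters, and nothing stronger holds without that hypothesis. For example, take $k=1$ and $\tilde A=\Z[x]/(x^3)\to A=\Z$, so $n=0$ and $M=I=(x)$. Then $\pi_1\mathbb L_{A/\tilde A}\cong I/I^2$ is a proper quotient of $\pi_1\Sigma M\cong I$, and this extension is not square-zero.

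Consequently your final claim, that $\tilde A\to A^\eta$ is an equivalence on fibers, is unjustified in degree $2n$. You must show that the connecting map from $\pi_{2n+2}$ of the cofiber of $\Sigma M\to\mathbb L_{A/\tilde A}$ into $\pi_{2n+1}\Sigma M$ vanishes. That means identifying this connecting map with the multiplication of $M$.

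For $k=1$ this is a direct computation:
\[
\mathbb L^{\E_1}_{A/\tilde A}\simeq\operatorname{fib}(A\otimes_{\tilde A}A\to A)\simeq\Sigma(A\otimes_{\tilde A}M)\simeq\Sigma\operatorname{cofib}(M\otimes_{\tilde A}M\to M),
\]
where the last map is induced by the multiplication of $M$. Three facts hold: $M\otimes_{\tilde A}M$ is $2n$-connective, $M$ is $2n$-truncated, and $\pi_{2n}(M\otimes M)\to\pi_{2n}(M\otimes_{\tilde A}M)$ is surjective. Hence a nullhomotopy of $M\otimes M\to M$ makes this map null. It follows that $\mathbb L_{A/\tilde A}\simeq\Sigma M\oplus\Sigma^2(M\otimes_{\tilde A}M)$, and its $(2n+1)$-truncation is $\Sigma M$. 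For $k\geq 2$ you need an analogous identification of the connecting map. This is the real content beyond the connectivity estimate, and your proposal does not supply it.

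Your Step 2 silently relies on the same identification: ``connectivity bounds kill the higher corrections'' again only yields surjectivity on $\pi_{2n+1}$. It is cleaner to note that $\Phi^{(k)}$ has a left adjoint $(\tilde A\to A)\mapsto(\mathbb L_A\to\mathbb L_{A/\tilde A})$. Followed by $\tau_{\le 2n+1}$, it serves as a left adjoint to the restriction to $\operatorname{Der}^{(k)}_{n-sm}$ on maps with connective target and $(n+1)$-connective cofiber. Full faithfulness and the essential image then become the counit and unit statements.

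The counit needs no multiplicative input. Indeed, $\eta$ factors through some $\bar\eta\colon\mathbb L_{A/A^\eta}\to\Sigma M$, and the composite $\Sigma M\simeq\operatorname{cofib}(A^\eta\to A)\to\mathbb L_{A/A^\eta}\xrightarrow{\bar\eta}\Sigma M$ is an equivalence. So $\Sigma M$ splits off with a $(2n+2)$-connective complement. For the unit, the nullity of the multiplication is precisely what must produce such a splitting.
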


\begin{cor}\label{nsmallissquarezero}
    Any $n$-small extension of $\E_k$-algebras is a square-zero extension in the sense that it is given by a derivation.
\end{cor}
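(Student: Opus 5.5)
The corollary follows directly from the preceding theorem by reading off its essential-image statement. Let $f\colon A'\to A$ be an $n$-small extension of $\E_k$-algebras in $\mathcal D$, and view it as an object of $\Fun(\Delta^1,\Alg_{\E_k}(\mathcal D))$. The theorem says that the essential image of the fully faithful functor $\Phi^{(k)}\colon \operatorname{Der}^{(k)}_{n-sm}\to \Fun(\Delta^1,\Alg_{\E_k}(\mathcal D))$ is exactly the full subcategory of $n$-small extensions. So there is an object $(B,\eta\colon \mathbb L^{\E_k}_B\to \Sigma M)$ with $M\in\mathcal D_{[n,2n]}$ and an equivalence of arrows $(f\colon A'\to A)\simeq (B^\eta\to B)$.

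Evaluating this equivalence at the target vertex identifies $B\simeq A$. Transporting $\eta$ along this equivalence gives a derivation $\eta'\colon \mathbb L^{\E_k}_A\to\Sigma M$ in $\Mod^{\E_k}_A$ for which $A'\simeq A^{\eta'}$ over $A$. This is precisely the statement that $f$ is the square-zero extension classified by a derivation.

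I would also record the identification of $M$ with the fiber. From the construction of $\Phi^{(k)}$, $A^\eta$ is the pullback of $A\xrightarrow{0} A\oplus\Sigma M \xleftarrow{\eta} A$. Hence $\operatorname{fib}(A^\eta\to A)\simeq \operatorname{fib}(0\to \Sigma M)\simeq M$, so $M\simeq\operatorname{fib}(f)$ as an $\E_k$-$A$-module. This recovers the form in which the paper states Lurie's result just before the theorem.

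There is no real obstacle. The only point that needs care is the bookkeeping of transporting the derivation along the equivalence $B\simeq A$. This is automatic, since $\operatorname{Der}^{(k)}$ is fibered over $\Alg_{\E_k}(\mathcal D)$ via $(B,\eta)\mapsto B$, so equivalences in the base lift to equivalences of derivations.
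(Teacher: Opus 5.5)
Your proposal is correct and matches the paper, which gives no separate proof and treats the corollary as a direct reading of the essential-image clause of the preceding theorem (Lurie's classification of $n$-small extensions by derivations). Transporting the derivation along the identification of targets and computing $\operatorname{fib}(f)\simeq M$ are harmless bookkeeping; neither is needed for the statement itself.
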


Now we look at the functor $\LMod_{A_{n}} \rightarrow \LMod_{A_{n-1}}$ more closely. By \cite{HA}, $A_{n} \rightarrow A_{n-1}$ is a square-zero extension, and we have the cartesian square $$\begin{tikzcd}
	{A_n} & {A_{n-1}} \\
	{A_{n-1}} & {A_{n-1}\oplus \Sigma F_n}
	\arrow[from=1-1, to=1-2]
	\arrow[from=1-1, to=2-1]
	\arrow["\lrcorner"{anchor=center, pos=0.125}, draw=none, from=1-1, to=2-2]
	\arrow["{d_0}"', from=1-2, to=2-2]
	\arrow["d", from=2-1, to=2-2]
\end{tikzcd}$$ in $\Alg(\mathcal{C})$, where ${A_{n-1}\oplus \Sigma F_n}$ denotes the trivial square-zero extension of $A_{n-1}$ by $F_n$, the fiber of $A_n \rightarrow A_{n-1}$, and $d_0$ and $d$ are respectively the trivial derivation and the derivation classifying $A_n \rightarrow A_{n-1}$. Moreover, we have the cartesian square of categories
$$\begin{tikzcd}
	{\LMod_{A_n}} & {\LMod_{A_{n-1}}} \\
	{\LMod_{A_{n-1}}} & {\LMod_{A_{n-1}\oplus \Sigma F_n}}
	\arrow[from=1-1, to=1-2]
	\arrow[from=1-1, to=2-1]
	\arrow["\lrcorner"{anchor=center, pos=0.125}, draw=none, from=1-1, to=2-2]
	\arrow["{d_0^*}"', from=1-2, to=2-2]
	\arrow["d^*", from=2-1, to=2-2]
\end{tikzcd},$$ where the maps in the second diagram are all extensions of scalars along maps in the first square. Note that $d^*$ admits a right adjoint $d_*$. We thus obtain a lax-symmetric monoidal endofunctor $\Theta = d_* d_0^* : \LMod_{A_{n-1}} \rightarrow \LMod_{A_{n-1}}$ and a natural transformation $\pi : \Theta \simeq d_* d_0^* \Longrightarrow d_* p^* p_* d_0^* \simeq \operatorname{id}$, using the fact that both $d$ and $d_0$ are sections of the projection $p : A_{n-1} \oplus \Sigma F_n \rightarrow A_{n-1}$. The underlying object of $\Theta M$ is merely $M \oplus (A_0 \otimes_{A_{n-1}} M)$, however, with an $A_{n-1}$-module structure depending on $d$.

One useful observation is that $\LMod_{A_n}$ can be identified with the category of sections of $\pi$. Recall the formal definition of the category of such sections:
\begin{construction}
Define $\Theta\text{-}Sect_{A_{n-1}}$ to be the category of pairs $(M,s,h)$, where $s$ is a map $M \rightarrow \Theta M$ in $\mathcal{C}$ and $h : \pi s \simeq \operatorname{id}$ is a homotopy witnessing that $s$ is a section. More formally, we define $\Theta\text{-}Sect$ by the following cartesian square
    $$\begin{tikzcd}
	{\Theta\text{-}Sect_{A_{n-1}}} & {\Fun(\Delta^2 , \LMod_{A_{n-1}})} \\
	{\LMod_{A_{n-1}}} & {\Fun(\Lambda_2^2 , \LMod_{A_{n-1}})}
	\arrow[from=1-1, to=1-2]
	\arrow[from=1-1, to=2-1]
	\arrow["\lrcorner"{anchor=center, pos=0.125}, draw=none, from=1-1, to=2-2]
	\arrow[from=1-2, to=2-2]
	\arrow["{(\pi, \operatorname{id})}", from=2-1, to=2-2]
\end{tikzcd}.$$
\end{construction}

We can now state the observation more precisely as follows:
\begin{proposition}[{\cite[3.8]{pstragowski2021abstract}}] \label{thetasect}
    There is a canonical equivalence of categories $$\LMod_{A_n} \xrightarrow{\simeq} \Theta\text{-}Sect_{A_{n-1}}.$$ Moreover, under this equivalence, the extension of scalars $\LMod_{A_n} \rightarrow \LMod_{A_{n-1}}$ corresponds to the forgetful functor $\Theta\text{-}Sect_{A_{n-1}} \rightarrow \LMod_{A_{n-1}}$ sending $(M,s,h)$ to $M$.
\end{proposition}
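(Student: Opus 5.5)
The plan is to derive the equivalence from the cartesian square of module categories displayed just before the statement:
$$\LMod_{A_n}\simeq \LMod_{A_{n-1}}\times_{\LMod_{A_{n-1}\oplus\Sigma F_n}}\LMod_{A_{n-1}}.$$
An object of the right-hand side is a triple $(M, N, \alpha)$ with $M, N \in \LMod_{A_{n-1}}$ and $\alpha: d_0^* M \simeq d^* N$. Since $p\circ d\simeq \operatorname{id}\simeq p\circ d_0$, applying $p_*$ to $\alpha$ identifies $N \simeq p_* d^* N \simeq p_* d_0^* M \simeq M$. Hence the fiber product can be rewritten as pairs $(M,\alpha)$ with $\alpha: d_0^*M\simeq d^*M$ compatible with this identification after applying $p_*$.

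Next I would pass to adjoints. By the $d^*\dashv d_*$ adjunction, a map $d_0^*M\to d^*M$ corresponds to a map $s: M\to d_*d_0^*M=\Theta M$ in $\LMod_{A_{n-1}}$. The compatibility with $p_*$ becomes a homotopy $h:\pi s\simeq \operatorname{id}$, where $\pi$ is the transformation built from $p$ above. This gives a functor
$$\LMod_{A_n}\to\Theta\text{-}Sect_{A_{n-1}},$$
and by construction its composite with the forgetful functor is the projection to the first factor, i.e.\ extension of scalars along $A_n\to A_{n-1}$. That settles the ``moreover'' clause.

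The main obstacle is proving that this functor is an equivalence. In particular, one must show that every map $d_0^*M\to d^*M$ whose $p_*$-image is the identity is automatically an equivalence. Here I would use that $p: A_{n-1}\oplus\Sigma F_n\to A_{n-1}$ has nilpotent fiber. A map of $A_{n-1}\oplus\Sigma F_n$-modules inducing an equivalence after base change along $p$ is then an equivalence, by a connectivity/nilpotence argument: $\Sigma F_n$ is $(n+1)$-connective and square-zero, so one can filter and use that $\mathcal C$ is separated. For full faithfulness, I would compare mapping spaces on both sides. Both are computed as equalizer-type limits: mapping spaces in the fiber product on one side, and on the other the space of maps $M\to M'$ commuting with $s,s'$ up to coherent homotopy compatible with $h,h'$. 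These agree by adjunction.

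To make the coherence rigorous rather than pointwise, I would run the argument at the level of categories. The full subcategory of $\Fun(\Delta^1,\LMod_{A_{n-1}\oplus\Sigma F_n})$ on the relevant maps should be identified, via the adjunction and the cartesian square defining $\Theta\text{-}Sect$, with the lax equalizer of $d_0^*$ and $d^*$. The sectional condition then cuts this out as the strict pullback, following \cite[3.8]{pstragowski2021abstract}.
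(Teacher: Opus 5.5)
The paper does not prove this itself but imports \cite[3.8]{pstragowski2021abstract}, and your outline is the standard argument behind that citation and is correct: the cartesian square of module categories, transposition of the gluing datum across $d^*\dashv d_*$, and conservativity of base change along the square-zero projection $p$ (immediate from the cofiber sequence $\Sigma F_n\otimes_{A_{n-1}}\bar X\to X\to\bar X$, $\bar X=A_{n-1}\otimes_{A_{n-1}\oplus\Sigma F_n}X$, since $\Sigma F_n$ is restricted along $p$, so no filtration or separatedness is needed). The one thing to fix is orientation: $d^*\dashv d_*$ transposes maps $d^*M\to d_0^*M$ (the direction the displayed square actually provides), not $d_0^*M\to d^*M$, into maps $s\colon M\to d_*d_0^*M=\Theta M$, and under this transposition $\pi\circ s$ is the base change of that map along $p$, so your lax equalizer and your ``automatically an equivalence'' claim should be stated for maps $d^*M\to d_0^*M$.
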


\begin{rmk}
    By \cite[3.9]{pstragowski2021abstract}, the equivalence in the proposition can be promoted to an equivalence of symmetric monoidal categories, provided that $A$ is moreover a commutative algebra.
\end{rmk}

\begin{example}[{\cite[Proof of Theorem 4.33]{DG1}}]
    Let $\mathcal{C} \simeq \operatorname{Syn}_R$ for an $\mathbb{E}_\infty$-ring spectrum $R$, as stated in Construction \ref{SynthR}. Then the monoidal unit $\nu R$ is canonically a shift algebra in $\operatorname{Syn}_R$. Moreover, the restricted Yoneda embedding induces a canonical equivalence $$\LMod_R(\Sp) \xrightarrow{\simeq} \LMod_{\nu R}^{per}(\operatorname{Syn}_R) =: \operatorname{Syn}_{R}^{per}$$ between the category of left $R$-module spectra and the category of periodic $\nu R$-modules in $\operatorname{Syn}_R$, which we may also call periodic $R$-modules or simply periodic objects.
\end{example}

Using the Postnikov tower $$\dots \rightarrow A_n \rightarrow A_{n-1} \rightarrow \dots \rightarrow A_0$$ associated with $A$, we may generalize the notion of periodic modules:

\begin{definition}[{\cite[4.1]{pstragowski2021abstract}}]
    Assume $\mathcal{C}$ is complete. Let $0 \leq n \leq \infty$. An $A_n$-module $M$ is called a potential $n$-stage for a periodic $A$-module if $A_0 \otimes_{A_n} M$ is discrete. We denote the full subcategory of $\LMod_{A_n}$ spanned by potential $n$-stages by $\mathcal{M}_n$.
\end{definition}

\begin{rmk}
    A potential $0$-stage is nothing but a discrete $\pi_0(A)$-module, while a potential $\infty$-stage is a periodic $A$-module. The extension-of-scalars functor $\LMod_{A_n} \rightarrow \LMod_{A_m}$ for $m \leq n$ restricts to $\mathcal{M}_n \rightarrow \mathcal{M}_m$. Note that a potential $n$-stage is always $n$-truncated, and its $A_n$-module structure is uniquely determined by its underlying $A$-module. Consequently, we may compute the mapping spaces in $\LMod_A$.
\end{rmk}

Fix a shift algebra $A$. By the above discussion, we obtain the tower of potential $n$-stages: $$\dots \rightarrow \mathcal{M}_n \rightarrow \mathcal{M}_{n-1} \rightarrow \dots \rightarrow \mathcal{M}_{0}.$$
If $\mathcal{C}$ is complete, we have the inverse limit of this tower $\mathcal{M}_{\infty} = \varprojlim\mathcal{M}_{n}$, together with the induced map $\mathcal{M}_{\infty} \rightarrow \mathcal{M}_0$.

In practice, $\mathcal{M}_{\infty}$ can usually be identified with the category of certain geometric objects one wants to classify, $\mathcal{M}_{0}$ with the category of some algebraic objects, and the arrow $\mathcal{M}_{\infty} \rightarrow \mathcal{M}_0$ with a functor associated to some algebraic invariant (\cite[4.3]{pstragowski2021abstract}).

We may also make a similar definition for algebras.
\begin{definition}[{\cite[5.1]{pstragowski2021abstract}}]
    A potential $n$-stage for an $\mathbb{E}_k$-algebra is an $\mathbb{E}_k$-algebra in $\LMod_{A_n}$ whose underlying $A_n$-module is a potential $n$-stage for a periodic $A$-module. We refer to the category of potential $n$-stages for $\mathbb{E}_k$-algebras \\ $\Alg_{\mathbb{E}_k}(\LMod_{A_n}) \times _{\LMod_{A_n}} \mathcal{M}_n$ by simply writing $\Alg_{\mathbb{E}_k}(\mathcal{M}_n)$.
\end{definition}

\begin{proposition}[{\cite[4.7,4.8,5.3]{pstragowski2021abstract}}]\label{square-zero2}
    Let $R \in \Alg_{\mathbb{E}_k}(\mathcal{M}_n)$ be a potential $n$-stage for an $\mathbb{E}_k$-algebra. Then $\pi : \Theta R \rightarrow R$ is a square-zero extension by $\Sigma^{n+1} R_0[-n]$.
\end{proposition}

\begin{proof}
    First note that $\pi$ is an $n$-equivalence since it is induced by the unit of the adjunction induced by the projection $p : A_{n-1} \oplus \Sigma^{n+1} A_0[-n] \rightarrow A_{n-1}$, where $p$ is an $n$-equivalence. The fiber is computed by definition of potential $n$-stages: as objects in $\mathcal{C}$, $\Theta R \simeq R \oplus \Sigma^{n+1}(A_0 \otimes_{A_{n-1}} R) \simeq R \oplus \Sigma^{n+1} R_0[-n]$, so the cofiber must be equivalent to the suspension of the second summand, and the fiber is therefore equivalent to the second summand. Since the fiber $\Sigma^{n+1} R_0[-n]$ is concentrated in a single degree, $\pi$ is a square-zero extension, according to Corollary \ref{nsmallissquarezero}.
\end{proof}

Using the fact that $\pi : \Theta R \rightarrow R$ for a potential $n$-stage $R$ is a square-zero extension, we may identify the obstructions to lifting objects in $\Alg_{\mathbb{E}_k}(\mathcal{M}_{n-1})$ to $\Alg_{\mathbb{E}_k}(\mathcal{M}_n)$:

\begin{proposition}[{\cite[5.4]{pstragowski2021abstract}}]\label{obobj}
    Let $R \in \Alg_{\mathbb{E}_k}(\mathcal{M}_{n-1})$ be a potential $(n-1)$-stage for an $\mathbb{E}_k$-algebra, $1 \leq k \leq \infty$. Then there exists an obstruction $$\theta \in \operatorname{Ext}_{\pi_0(R)}^{n+2}(\mathbb{L}^{\mathbb{E}_k}_{\pi_0(R)}, \pi_0(R)[-n]),$$ which vanishes if and only if $R$ can be lifted to a potential $n$-stage $R' \in \Alg_{\mathbb{E}_k}(\mathcal{M}_{n})$. Here the Ext group is computed in the derived category of $\E_k$-modules in the derived category $\LMod_{\pi_0(A)}$
\end{proposition}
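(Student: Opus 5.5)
The plan is to reduce lifting along $\mathcal{M}_n \to \mathcal{M}_{n-1}$ to a square-zero lifting problem for $\mathbb{E}_k$-algebras, using Proposition \ref{thetasect} and Proposition \ref{square-zero2}. First I would upgrade Proposition \ref{thetasect} to algebras. Since $\Theta = d_*d_0^*$ is lax monoidal, and the equivalence $\LMod_{A_n}\simeq \Theta\text{-}Sect_{A_{n-1}}$ is compatible with this structure (as in \cite[3.9]{pstragowski2021abstract}, or using only the $\mathbb{E}_k$-structure where needed), it should follow that an $\mathbb{E}_k$-algebra in $\LMod_{A_n}$ lifting $R$ is the same thing as an $\mathbb{E}_k$-algebra section $s : R \to \Theta R$ of $\pi$. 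Moreover, $A_0\otimes_{A_n}(-)$ of such a lift is discrete exactly when it is for $R$, so the potential-stage condition is automatic. Hence the space of lifts of $R$ to $\Alg_{\mathbb{E}_k}(\mathcal{M}_n)$ is the space of sections of $\pi : \Theta R \to R$ in $\Alg_{\mathbb{E}_k}(\LMod_{A_{n-1}})$.

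Next, by Proposition \ref{square-zero2}, $\pi$ is a square-zero extension by $M=\Sigma^{n+1}R_0[-n]$, where $R_0=\pi_0(R)$. Applying Lurie's classification of $n$-small extensions (Corollary \ref{nsmallissquarezero}), $\pi$ is classified by a derivation $\eta : \mathbb{L}^{\mathbb{E}_k}_R \to \Sigma M = \Sigma^{n+2}R_0[-n]$. Because $\Theta R$ is the pullback of the trivial derivation along $\eta$, sections of $\pi$ correspond to nullhomotopies of $\eta$. So a lift exists if and only if the class $[\eta]\in \pi_0\Map_{\Mod^{\mathbb{E}_k}_R}(\mathbb{L}^{\mathbb{E}_k}_R,\Sigma^{n+2}R_0[-n])$ vanishes. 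This class will be the obstruction $\theta$.

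The final step identifies this group algebraically. The target $\Sigma^{n+2}R_0[-n]$ is a module over $R_0$, regarded as an $\mathbb{E}_k$-$R$-module via $R\to R_0$. By base change for the cotangent complex, $\Map_{\Mod^{\mathbb{E}_k}_R}(\mathbb{L}_R,N)\simeq \Map_{\Mod^{\mathbb{E}_k}_{R_0}}(R_0\otimes_R\mathbb{L}_R, N)$. Then I would compare $R_0\otimes_R\mathbb{L}^{\mathbb{E}_k}_R$ with $\mathbb{L}^{\mathbb{E}_k}_{R_0}$, computed in $\LMod_{\pi_0(A)}$ rather than in $\LMod_{A_{n-1}}$. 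This uses the cofiber sequence for $A_{n-1}\to A_0$ together with the fact that $A_0\otimes_{A_{n-1}}R\simeq R_0$, which is the definition of potential stage. Extension of scalars along $A_{n-1}\to A_0$ sends $R$ to $R_0$ and is $\mathbb{E}_k$-monoidal, so it carries $\mathbb{L}_R$ to $\mathbb{L}_{R_0}$. Adjunction then moves the mapping space into $\LMod_{A_0}$. Taking $\pi_0$ gives $\operatorname{Ext}^{n+2}_{\pi_0(R)}(\mathbb{L}^{\mathbb{E}_k}_{\pi_0(R)},\pi_0(R)[-n])$.

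I expect the main obstacle to be this last identification: correctly transporting $\mathbb{E}_k$-$R$-modules over $A_{n-1}$ to $\mathbb{E}_k$-$R_0$-modules over $A_0$, which needs a pullback argument that the relative cotangent complex is compatible with base change of the ambient category. I would handle it by treating $\mathbb{L}$ as a left adjoint and checking that both adjoint functors commute with the monoidal base-change functor. The first step, the algebra version of $\Theta$-sections, I would cite from \cite[5.3]{pstragowski2021abstract}.
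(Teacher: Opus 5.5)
Your proposal is correct and follows essentially the same route as the paper. Proposition \ref{thetasect} reduces lifting to finding a section of the square-zero extension $\pi:\Theta R\to R$ from Proposition \ref{square-zero2}, and the classifying derivation $\mathbb{L}^{\mathbb{E}_k}_R\to\Sigma^{n+2}R_0[-n]$ is the obstruction; base change along $A_{n-1}\to A_0$, using $A_0\otimes_{A_{n-1}}R\simeq R_0$ and the fact that the discrete coefficient module is restricted from $A_0$, then identifies it with the Ext class. In that last step, read $R_0\otimes_R(-)$ as the operadic-module extension of scalars induced by $A_0\otimes_{A_{n-1}}(-)$; its compatibility with absolute cotangent complexes together with the extension--restriction adjunction is all you need, so the cofiber sequence you mention is unnecessary.
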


We recall the proof here as it reflects the general machinery in obstruction theory. We need to apply the following base change formula for cotangent complexes:

\begin{lemma}[{\cite[7.3.3.7]{HA}}]\label{basechange}
    Let $\mathcal{C}$ be a presentable category, $T_\mathcal{C}$ be a tangent bundle to $\mathcal{C}$, and $p$ the composition map $$T_\mathcal{C} \rightarrow \Fun(\Delta^1, \mathcal{C}) \rightarrow \Fun(\{1\}, \mathcal{C}) \simeq {C}.$$
    Suppose that we are given a cocartesian square $$\begin{tikzcd}
	A & B \\
	{A'} & {B'}
	\arrow[from=1-1, to=1-2]
	\arrow[from=1-1, to=2-1]
	\arrow[from=1-2, to=2-2]
	\arrow[from=2-1, to=2-2]
	\arrow["\lrcorner"{anchor=center, pos=0.125, rotate=180}, draw=none, from=2-2, to=1-1]
\end{tikzcd}$$
in $\mathcal{C}$. Then the induced map $\beta :\mathbb{L}_{B/A} \rightarrow \mathbb{L}_{B'/A'}$ is a $p$-cocartesian morphism in $T_\mathcal{C}$.
\end{lemma}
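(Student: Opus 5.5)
We will prove the lemma by checking the standard characterization of $p$-cocartesian edges. Write $f\colon A\to B$, $a\colon A\to A'$, $g\colon B\to B'$, $f'\colon A'\to B'$ for the maps in the square. The projection $p\colon T_\mathcal{C}\to\mathcal{C}$ is a presentable fibration. Its fiber over $X$ is the stable category $\Sp(\mathcal{C}_{/X})$, and for a morphism $u\colon X\to Y$ the pushforward $u_!$ is exact and colimit-preserving. Hence a morphism $M\to M'$ in $T_\mathcal{C}$ lying over $u$ is $p$-cocartesian if and only if the induced map $u_!M\to M'$ is an equivalence in $T_\mathcal{C}\times_\mathcal{C}\{Y\}$. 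We therefore need to show that the comparison map
\[
g_!\,\mathbb{L}_{B/A}\longrightarrow \mathbb{L}_{B'/A'}
\]
is an equivalence in the fiber over $B'$.

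Recall that $\mathbb{L}_{B/A}$ is the cofiber, in the fiber over $B$, of the map $f_!\mathbb{L}_A\to\mathbb{L}_B$ adjoint to $\mathbb{L}(f)$, and similarly for $\mathbb{L}_{B'/A'}$. Since $g_!$ is exact and $g_!f_!\simeq (gf)_!$, we get
\[
g_!\mathbb{L}_{B/A}\simeq \operatorname{cofib}\big((gf)_!\mathbb{L}_A\to g_!\mathbb{L}_B\big).
\]
The key input is that the absolute cotangent complex functor $\mathbb{L}\colon\mathcal{C}\to T_\mathcal{C}$ is a left adjoint, as in \cite[7.3.2]{HA}. It is left adjoint to the functor $T_\mathcal{C}\to\mathcal{C}$ sending $(X,M)$ to $\Omega^\infty M$, the total object over $X$, so it preserves pushouts. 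Therefore $\mathbb{L}_{B'}$ is the pushout in $T_\mathcal{C}$ of $\mathbb{L}_{A'}\leftarrow\mathbb{L}_A\to\mathbb{L}_B$.

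Next we use the description of colimits in a presentable fibration (cf.\ \cite[4.3.1]{HTT}). A colimit in $T_\mathcal{C}$ lies over the colimit in $\mathcal{C}$, here $B'$. It is computed by pushing every term forward to the fiber over $B'$ and taking the colimit there. This gives a pushout square
\[
\begin{tikzcd}
(gf)_!\mathbb{L}_A & g_!\mathbb{L}_B \\
f'_!\mathbb{L}_{A'} & \mathbb{L}_{B'}
\arrow[from=1-1, to=1-2]
\arrow[from=1-1, to=2-1]
\arrow[from=1-2, to=2-2]
\arrow[from=2-1, to=2-2]
\end{tikzcd}
\]
in the stable category $T_\mathcal{C}\times_\mathcal{C}\{B'\}$, using $(gf)_!\simeq(f'a)_!$. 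In a stable category a pushout square induces an equivalence on horizontal cofibers. Thus $\operatorname{cofib}((gf)_!\mathbb{L}_A\to g_!\mathbb{L}_B)\simeq\operatorname{cofib}(f'_!\mathbb{L}_{A'}\to\mathbb{L}_{B'})=\mathbb{L}_{B'/A'}$. It remains to check that this equivalence is the comparison map induced by $\beta$, which follows from naturality of the cofiber construction.

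We expect the main obstacle to be the step computing the pushout in the total category $T_\mathcal{C}$ fiberwise. This requires that $p$ is a cocartesian fibration whose fibers admit colimits preserved by the pushforward functors, and that $\mathbb{L}$ preserves colimits as a functor into the total space rather than merely fiberwise. We plan to justify both by citing the presentability of the tangent bundle and the adjunction defining $\mathbb{L}$ in \cite[7.3.1, 7.3.2]{HA}.
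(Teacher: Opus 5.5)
The paper gives no argument of its own for this lemma---it is imported verbatim from \cite[7.3.3.7]{HA}---and your proposal is a correct proof of it. It rests on the same two inputs as the standard argument: that $\mathbb{L}\colon\mathcal C\to T_{\mathcal C}$ is a left adjoint, and that colimits in the presentable fibration $p\colon T_{\mathcal C}\to\mathcal C$ are computed in the fiber over the colimit after cocartesian pushforward. The step you leave to ``naturality'' (that the resulting equivalence $g_!\mathbb{L}_{B/A}\simeq\mathbb{L}_{B'/A'}$ is the map adjoint to $\beta$) is routine: $\beta$ is the map on cofibers induced by the square $f_!\mathbb{L}_A\to\mathbb{L}_B$ over $f'_!\mathbb{L}_{A'}\to\mathbb{L}_{B'}$ lying over $g$, which is exactly the square you show is a pushout.
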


If we take $\mathcal{C}$ to be $\Alg_{\mathbb{E}_k}(\mathcal{M}_{\infty})$, the above lemma says that the canonical map $g^*\mathbb{L}_{B/A} \rightarrow \mathbb{L}_{B'/A'}$ is an equivalence of $\mathbb{E}_k$-operadic $B'$-modules, where $g^*$ denotes the base change functor $\Mod^{\mathbb{E}_k}_B \simeq \Sp(\mathcal{C}^{/B}) \rightarrow \Sp(\mathcal{C}^{/B'}) \simeq \Mod^{\mathbb{E}_k}_{B'}$.

\begin{proof}[Proof of Proposition \ref{obobj}]
    By Proposition \ref{thetasect}, a potential $(n-1)$-stage $R$ can be lifted to a potential $n$-stage if and only if $\pi : \Theta R \rightarrow R$ admits a section. By Proposition \ref{square-zero2}, $\pi$ exhibits $\Theta R$ as a square-zero extension of $R$ by $\Sigma^{n+1} \pi_0(R)[-n]$, and so is classified by an element in $\pi_0 \Map_{R}^{\E_k}(\mathbb{L}^{\mathbb{E}_k}_{R}, \Sigma^{n+2}\pi_0(R)[-n])$, according to \cite[7.4]{HA}. The target of the mapping space is a suspension of a discrete object, and so it is canonically an $A_0$-module. Also recall that $R$ is a potential $(n-1)$-stage so that $A_0 \otimes_{A_{n-1}} R \simeq R_0$. Thus, we have $$\pi_0 \Map_{R}^{\mathbb{E}_k}(\mathbb{L}^{\mathbb{E}_k}_{R}, \Sigma^{n+2}\pi_0(R)[-n]) \simeq \pi_0 \Map_{\pi_0(R)}^{\mathbb{E}_k}(\mathbb{L}^{\mathbb{E}_k}_{\pi_0(R)}, \Sigma^{n+2}\pi_0(R)[-n]),$$ where the right-hand side is canonically equivalent to the Ext group in the proposition.
\end{proof}

We may also describe the obstruction to lifting morphisms. The proof of the following theorem can be found in \cite{pstragowski2021abstract}.

\begin{proposition}[{\cite[5.7]{pstragowski2021abstract}}]\label{obmor}
    Let $R, S \in \Alg_{\mathbb{E}_k}(\mathcal{M}_{n})$ be objects whose images in $\Alg_{\mathbb{E}_k}(\mathcal{M}_{n-1})$ are denoted by $R',S'$. Then, given any map $\phi : R' \rightarrow S'$, there exists an equivalence
    \begin{equation*}
        \begin{split}
            \mathcal{L}(\phi) &:= \Map_{\Alg_{\E_k}(\mathcal{M}_n)}(R,S) \times_{\Map_{\Alg_{\E_k}(\mathcal{M}_{n-1})}(R',S')} \{ \phi \} \\
            &\xrightarrow{\simeq} P_{0,\eta}\Map_{\pi_0(R)}^{\E_k}(\mathbb{L}_{\pi_0(R)}^{\mathbb{E}_k}, \Sigma^{n+1}\pi_0(S)[-n])
        \end{split}
    \end{equation*}
    between the space of lifts of $\phi$ and the space of nullhomotopies of a certain map $\eta : \mathbb{L}_{\pi_0(R)}^{\mathbb{E}_k} \rightarrow \Sigma^{n+1} \pi_0(S)[-n]$ of $\E_k$-$\pi_0(R)$-modules.
\end{proposition}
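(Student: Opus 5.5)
Via Proposition \ref{thetasect}, a map between potential $n$-stages is the same thing as a map of their images at stage $n-1$ together with a compatibility of the chosen sections. I would therefore identify the fiber $\mathcal{L}(\phi)$ with a space of paths between two sections of a square-zero extension. Concretely, the equivalence $\Alg_{\E_k}(\mathcal{M}_n)\simeq$ ($\E_k$-algebras in $\Theta\text{-}Sect$ restricted to potential stages) presents $R$ and $S$ as $R'$ and $S'$ equipped with sections $s_R : R'\to\Theta R'$ and $s_S : S'\to \Theta S'$ of $\pi$. A lift of $\phi$ is then a homotopy between two maps $R'\to \Theta S'$ lying over $\phi$: the composites $s_S\circ\phi$ and $\Theta(\phi)\circ s_R$. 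Formally, I would write $\Map_{\Alg_{\E_k}(\mathcal{M}_n)}(R,S)$ as an equalizer (pullback) of the two maps $\Map(R',S')\rightrightarrows \Map_{/S'}(R',\Theta S')$ induced by $s_R$ and $s_S$. Fibering over $\phi$ then gives $\mathcal{L}(\phi)\simeq$ the space of paths in $\Map_{\Alg_{\E_k}/S'}(R',\Theta S')$ (over $\phi$) from $s_S\phi$ to $\Theta(\phi)s_R$.

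Next I would use Proposition \ref{square-zero2}: $\Theta S'\to S'$ is a square-zero extension by $\Sigma^{n}\pi_0(S)[-n]$, with indices shifted one down because the stage is $n-1$. So, by \cite[7.4]{HA}, $\Theta S' \simeq S'\times_{S'\oplus\Sigma^{n+1}\pi_0(S)[-n]} S'$ for a derivation $d_{S'}$. Hence $\Map_{/S'}(R',\Theta S')$ over $\phi$ is the space of nullhomotopies of $\phi^*d_{S'}$, which is a torsor under $\Omega\Map^{\E_k}_{R'}(\mathbb{L}^{\E_k}_{R'},\Sigma^{n+1}\pi_0(S)[-n])$. The two points $s_S\phi$ and $\Theta(\phi)s_R$ therefore differ by a loop, i.e. by a map $\eta:\mathbb{L}^{\E_k}_{R'}\to\Sigma^{n+1}\pi_0(S)[-n]$. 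The space of paths between them is then the space of nullhomotopies $P_{0,\eta}$ of $\eta$ in $\Map^{\E_k}_{R'}(\mathbb{L}^{\E_k}_{R'},\Sigma^{n+1}\pi_0(S)[-n])$.

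Finally, I would pass from $R'$ to $\pi_0(R)$ exactly as in the proof of Proposition \ref{obobj}. The target $\Sigma^{n+1}\pi_0(S)[-n]$ is a shifted discrete object, hence an $\E_k$-module over $A_0\otimes_{A_{n-1}}R'\simeq \pi_0(R)$, pulled back along $R'\to\pi_0(R)$. The base change formula Lemma \ref{basechange}, applied to the pushout square $A_{n-1}\to R'$, $A_0\to\pi_0(R)$, together with the adjunction between extension and restriction of $\E_k$-modules, identifies
\[
\Map^{\E_k}_{R'}(\mathbb{L}^{\E_k}_{R'},\Sigma^{n+1}\pi_0(S)[-n])\simeq \Map^{\E_k}_{\pi_0(R)}(\mathbb{L}^{\E_k}_{\pi_0(R)},\Sigma^{n+1}\pi_0(S)[-n]).
\]
Strictly, the cotangent complex should be taken relative to $A_{n-1}$ (resp.\ $A_0$) in module categories. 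This identification transports $\eta$ and its space of nullhomotopies.

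I expect the main obstacle to be the first step: making the description of $\mathcal{L}(\phi)$ as a path space of sections rigorous at the level of $\E_k$-algebras rather than modules. This requires the monoidal upgrade of Proposition \ref{thetasect}, so that $\Theta$ is lax $\E_k$-monoidal and $\pi$ is an algebra map. It also requires checking that the relevant mapping spaces in the lax limit of sections are the stated pullbacks. I would handle it by writing $\Alg_{\E_k}(\LMod_{A_n})$ as the pullback of $\Alg_{\E_k}$ applied to the cartesian square of module categories, and computing mapping spaces in a pullback of categories.
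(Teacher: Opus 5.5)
Your strategy is the argument of \cite{pstragowski2021abstract}, which the paper cites rather than reproves: maps between sections as an equalizer of $f\mapsto s_S f$ and $f\mapsto\Theta(f)s_R$, the square-zero structure of $\Theta S'\to S'$, the difference of the lifts $s_S\phi$ and $\Theta(\phi)s_R$, and base change to stage $0$ as in Proposition \ref{obobj}. However, the degree bookkeeping in your second paragraph is wrong in two places, and you reach the stated formula only because the two errors cancel.

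\emph{First error.} Proposition \ref{square-zero2} already concerns $\Theta=d_*d_0^*$ on $\LMod_{A_{n-1}}$, built from $A_n\to A_{n-1}$. This is exactly the $\Theta$ governing the passage from stage $n-1$ to stage $n$, and it is used this way in the proof of Proposition \ref{obobj}. So for $S'\in\Alg_{\E_k}(\mathcal{M}_{n-1})$ one has, as objects, $\Theta S'\simeq S'\oplus\Sigma^{n+1}\pi_0(S)[-n]$. The fiber of $\pi$ is therefore $\Sigma^{n+1}\pi_0(S)[-n]$, not $\Sigma^{n}\pi_0(S)[-n]$, and $d_{S'}$ is a map $\mathbb{L}^{\E_k}_{S'}\to\Sigma^{n+2}\pi_0(S)[-n]$. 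Your ``shift one down'' also moves the suspension but not the grading, although both come from the single object $\Sigma F_n\simeq\Sigma^{n+1}A_0[-n]$.

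\emph{Second error.} Even within your own indexing, a loop in $\Map(\mathbb{L}^{\E_k}_{R'},\Sigma^{n+1}N)$ is a point of $\Map(\mathbb{L}^{\E_k}_{R'},\Sigma^{n}N)$, not a map $\mathbb{L}^{\E_k}_{R'}\to\Sigma^{n+1}N$. Likewise, paths between two points of a torsor under $\Omega X$ are paths in $\Omega X$, not in $X$. Carried out consistently, your indices would give $P_{0,\eta}\Map(\mathbb{L}^{\E_k}_{R'},\Sigma^{n}\pi_0(S)[-n])$, an obstruction one Ext-degree too low.

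With the correct fiber the argument goes through. Lifts of $\phi$ through $\Theta S'\to S'$ are nullhomotopies of $\phi^*d_{S'}\in\Map(\mathbb{L}^{\E_k}_{R'},\Sigma^{n+2}\pi_0(S)[-n])$. They therefore form a torsor under $\Omega\Map(\mathbb{L}^{\E_k}_{R'},\Sigma^{n+2}\pi_0(S)[-n])\simeq\Map(\mathbb{L}^{\E_k}_{R'},\Sigma^{n+1}\pi_0(S)[-n])$. Viewed as nullhomotopies, the two lifts $s_S\phi$ and $\Theta(\phi)s_R$ glue to a loop, i.e.\ to a point $\eta$ of the latter space. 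Paths between them are then exactly $P_{0,\eta}\Map(\mathbb{L}^{\E_k}_{R'},\Sigma^{n+1}\pi_0(S)[-n])$.

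Your base-change step then finishes the proof, with one caveat. For finite $k$, the square formed by $A_{n-1}\to R'$ and $A_0\to\pi_0(R)$ is not a pushout of $\E_k$-algebras in a single category, since coproducts of $\E_k$-algebras are not relative tensor products. So Lemma \ref{basechange} does not apply as stated. What you need instead is the compatibility of $\mathbb{L}^{\E_k}$ with extension of scalars along the monoidal functor $A_0\otimes_{A_{n-1}}(-)$, which is the input quoted in the proof of Lemma \ref{relativebasechange}.
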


\begin{rmk}
    The above equivalence is natural in the following sense: suppose we are given another object $T \in \Alg_{\mathbb{E}_k}(\mathcal{M}_n)$ with image $T' \in \Alg_{\mathbb{E}_k}(\mathcal{M}_{n-1})$ and another map $\psi : S' \rightarrow T'$. Then we have the following commutative diagram: $$\begin{tikzcd}
	{\mathcal{L}(\phi)} & {P_{0,\eta}\Map_{\pi_0(R)}(\mathbb{L}_{\pi_0(R)}^{\mathbb{E}_k}, \Sigma^{n+1}\pi_0(S)[-n])} \\
	{\mathcal{L}(\psi\circ\phi)} & {P_{0,\eta '}\Map_{\pi_0(R)}(\mathbb{L}_{\pi_0(R)}^{\mathbb{E}_k}, \Sigma^{n+1}\pi_0(T)[-n])}
	\arrow["\simeq", from=1-1, to=1-2]
	\arrow["{\psi \circ -}"', from=1-1, to=2-1]
	\arrow["{\psi^*}", from=1-2, to=2-2]
	\arrow["\simeq", from=2-1, to=2-2]
\end{tikzcd}.$$
\end{rmk}

\section{\texorpdfstring{$\pi_*$-\'etale}{pi-star-etale} morphisms of ring spectra}
From now on, we take $R$ instead of $A$ as the ``base ring'', and reserve the capital letters $A,B,C$ for the algebras. Do not confuse with the notation in the last section, where $A$ serves as the ``base ring''.
\subsection{Relative \texorpdfstring{$\pi_*$-\'etale}{pi-star-etale} rigidity}

We first recall the notion of \'etale morphism of Dirac rings. This notion first appeared in \cite[Ch.4]{DG1}.

\begin{definition} \label{defet}
    A morphism $f : A \rightarrow B$ of Dirac rings is called \'etale if it is finitely presented and for any solid commutative square of Dirac rings
    $$\begin{tikzcd}
	A & C \\
	B & {C/I}
	\arrow[from=1-1, to=1-2]
	\arrow["f"', from=1-1, to=2-1]
	\arrow[from=1-2, to=2-2]
	\arrow[dashed, from=2-1, to=1-2]
	\arrow[from=2-1, to=2-2]
\end{tikzcd},$$ where the right vertical map is the projection onto the quotient by a nilpotent graded ideal $I \subset C$, there exists a unique dotted arrow making the whole diagram commute.
\end{definition}

\'Etale morphisms of Dirac rings are even and flat; moreover, they have vanishing relative cotangent complexes:

\begin{proposition} \label{discretecotang}
   Let $R$ be a Dirac ring and let $f: A \rightarrow B$ be an \'etale morphism of Dirac $R$-algebras. We regard $A$ and $B$ as $\mathbb{E}_k$-algebra objects in $\mathcal{D}(R)$, the derived category of graded $R$-modules, for $2 \leq k \leq \infty$. Then the $\mathbb{E}_k$-cotangent complex $\mathbb{L}^{\mathbb{E}_k}_{B/A} \in \Mod_B^{\E_k}(\mathcal{D}(R))$ vanishes.
\end{proposition}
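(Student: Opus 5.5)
The plan is to prove the vanishing by the diagonal trick: show that the multiplication map $m : B \otimes_A B \to B$ is an idempotent localization, hence has vanishing cotangent complex, and then transport this back to $\mathbb{L}^{\mathbb{E}_k}_{B/A}$ using base change (Lemma \ref{basechange}) and transitivity. Throughout, $A$ and $B$ are discrete commutative algebras in $\mathcal{D}(R)$, so they are $\mathbb{E}_\infty$. Hence every pushout below can be formed inside $\Alg_{\mathbb{E}_k}(\mathcal{D}(R))$; the hypothesis $k\geq 2$ is what lets us run the arguments relative to a base.

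\textbf{Step 1 (algebraic input).} From Definition \ref{defet} I will extract two facts.
\begin{enumerate}
\item $f$ is flat. I expect this to be standard, or quotable from \cite[Ch.4]{DG1}. It implies that the derived tensor product $B\otimes_A B$ is discrete and equals the Dirac tensor product.
\item The multiplication $m_0 : B\otimes_A B \to B$ is, as a map of Dirac rings, the projection onto a factor $(B\otimes_A B)[e^{-1}] = (B\otimes_A B)e$ for an idempotent $e$ of degree $0$.
\end{enumerate}
For (2), finite presentation together with formal unramifiedness, i.e.\ $\Omega^1_{B/A}=0$, makes the kernel $J$ of $m_0$ finitely generated with $J=J^2$. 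Nakayama, applied in the usual way, then yields a degree-zero idempotent $e$ with $J=(1-e)$. Because the idempotent has degree $0$, the sign rule causes no trouble.

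\textbf{Step 2 (idempotent localizations are epimorphisms).} Next I will show that $m : B\otimes_A B \to B$ is an epimorphism in $\Alg_{\mathbb{E}_k}(\mathcal{D}(R))$. Concretely, for every $\mathbb{E}_k$-algebra $C$, the map
\[
\Map(B,C)\to\Map(B\otimes_A B,C)
\]
is the inclusion of those components on which $e$ maps to a unit; since $e$ is idempotent, this means $e\mapsto 1$ in $\pi_0 C$. This is the usual universal property of localization at an element. For $\mathbb{E}_k$-rings it follows from the fact that $\LMod$ of the localization is the full subcategory of modules on which $e$ acts invertibly.

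From the epimorphism property I will deduce $\mathbb{L}^{\mathbb{E}_k}_{B/B\otimes_A B}=0$. Indeed, for any $M\in\Mod^{\mathbb{E}_k}_B$, the space of sections of $B\oplus M\to B$ under $B\otimes_A B$ is contractible, because $B\oplus M\to B$ induces an isomorphism on $\pi_0$-units in the relevant component. Hence $\Map(\mathbb{L}^{\mathbb{E}_k}_{B/B\otimes_A B},M)\simeq *$ for every such $M$.

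\textbf{Step 3 (transitivity and base change).} Consider the composite
\[
B\xrightarrow{i} B\otimes_A B\xrightarrow{m} B,
\]
where $i$ is the second coordinate inclusion; its composite is the identity. The transitivity cofiber sequence for $\mathbb{E}_k$-cotangent complexes in $\Mod^{\mathbb{E}_k}_B$ reads
\[
m^*\mathbb{L}_{(B\otimes_A B)/B}\to \mathbb{L}_{B/B}=0\to \mathbb{L}_{B/(B\otimes_A B)} .
\]
Since the last term vanishes by Step 2, this gives $m^*\mathbb{L}_{(B\otimes_AB)/B}=0$.

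The square with $A\to B$ on top and $i$ on the bottom is cocartesian in $\Alg_{\mathbb{E}_k}$; this uses flatness and $k\geq2$, so that the relative tensor product computes the pushout. Lemma \ref{basechange} then gives
\[
\mathbb{L}_{(B\otimes_AB)/B}\simeq j^*\mathbb{L}_{B/A},
\]
where $j:B\to B\otimes_AB$ is the first inclusion. Therefore
\[
0=m^*j^*\mathbb{L}_{B/A}=(m\circ j)^*\mathbb{L}_{B/A}=\mathbb{L}_{B/A},
\]
because $m\circ j=\mathrm{id}_B$.

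\textbf{Main obstacle.} I expect the hardest part to be Step 2 together with the bookkeeping in Step 3 for $\mathbb{E}_k$-operadic modules rather than ordinary modules. There are two sub-points:
\begin{enumerate}
\item verifying that the localization universal property holds in $\Alg_{\mathbb{E}_k}(\mathcal{D}(R))$;
\item checking that the base-change functors $m^*$ and $j^*$ on $\Mod^{\mathbb{E}_k}$ compose correctly.
\end{enumerate}
If the direct approach to Step 2 proves awkward, I would first try to reduce to \cite[Proposition 4.35]{DG1}, which is the statement being proved, or to the $\mathbb{E}_\infty$ case. The reduction would use that for discrete flat graded algebras the $\mathbb{E}_k$ and $\mathbb{E}_\infty$ relative cotangent complexes vanish simultaneously, via the suspension comparison $\Sigma^{k}\mathbb{L}^{\mathbb{E}_k}$ versus operadic $B$-modules.
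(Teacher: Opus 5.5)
Your argument has a genuine gap in Step~3. There you assert that the square with corner $B\otimes_A B$ (maps $f$, $f$, $i$, $j$) is cocartesian in $\Alg_{\mathbb{E}_k}(\mathcal{D}(R))$ ``by flatness and $k\geq2$''. This is the only place $k$ enters, and the claim is false.

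Relative tensor products compute pushouts only in $\CAlg=\Alg_{\mathbb{E}_\infty}$, where the coproduct is $\otimes$. For finite $k$ the coproduct in $\Alg_{\mathbb{E}_k}$ is not the tensor product, and flatness does not change this. For $k=1$ it is the free product. For any finite $k$, already $\mathrm{Free}_{\mathbb{E}_k}(V)\sqcup\mathrm{Free}_{\mathbb{E}_k}(W)\simeq\mathrm{Free}_{\mathbb{E}_k}(V\oplus W)$ has weight-$(1,1)$ part $C_*(\mathrm{Conf}_2(\mathbb{R}^k))\otimes V\otimes W\simeq(V\otimes W)\oplus\Sigma^{k-1}(V\otimes W)$, whereas the tensor product has only $V\otimes W$.

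Hence Lemma~\ref{basechange} applies only to the genuine pushout $P=B\sqcup^{\mathbb{E}_k}_AB$. Running Step~3 with $P$ and its fold map $P\to B$ yields only $\mathbb{L}_{B/A}\simeq\Sigma^{-1}\mathbb{L}_{B/P}$, which holds for every map. The idempotent in $B\otimes_AB$ tells you nothing about $\mathbb{L}_{B/P}$ unless you already know $P\simeq B\otimes_AB$, and that is at least as strong as the claim.

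A sanity check confirms the problem. Steps~1--2 and the transitivity part of Step~3 hold verbatim for $k=1$, since localization at a central idempotent is an epimorphism of $\mathbb{E}_1$-rings. Yet the conclusion fails for $k=1$: $\mathbb{L}^{\mathbb{E}_1}_{B/A}\simeq\ker(B\otimes_AB\to B)=(1-e)(B\otimes_AB)$, which is nonzero for, e.g., $\mathbb{Q}\to\mathbb{Q}\times\mathbb{Q}$. This is why, for $k=1$, the paper proves only the weaker Lemma~\ref{discretebimodvanishing}.

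So $k\geq2$ must enter substantively, and as written your proof is valid only for $k=\infty$. Your fallback, that $\mathbb{E}_k$- and $\mathbb{E}_\infty$-cotangent complexes of flat discrete algebras vanish simultaneously, is unsupported, and it fails for $k=1$ on exactly these maps.

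The paper makes no diagonal argument. It changes base to $\mathcal{D}(A)$ and quotes \cite[Proposition 4.35]{DG1}, which is your other fallback. If you want a self-contained proof that keeps your Steps~1--2, feed the idempotent into the $\mathbb{E}_k$-structure through \cite[Theorem 7.3.5.1]{HA}. Take the cofiber sequences $U(X)\to X\to\Sigma^k\mathbb{L}^{\mathbb{E}_k}_X$ with $U(X)=\int_{S^{k-1}\times\mathbb{R}}X$, for $X=A,B$, and compare them via $U(B)\otimes_{U(A)}(-)$. This gives
\[
\Sigma^k\mathbb{L}^{\mathbb{E}_k}_{B/A}\simeq\operatorname{cofib}\bigl(U(B)\otimes_{U(A)}A\to B\bigr).
\]
For commutative algebras $U(B)\simeq B^{\otimes_R S^{k-1}}$, so $U(B)\otimes_{U(A)}A\simeq B^{\otimes_AS^{k-1}}$.

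Your Steps~1--2 give $B^{\otimes_AS^1}\simeq B\otimes_{B\otimes_AB}B\simeq B$. The decompositions $S^j=D^j\cup_{S^{j-1}}D^j$ then give $B^{\otimes_AS^j}\simeq B$ for all $j\geq1$, so the cofiber vanishes for $k\geq2$. For $k=1$ the same formula returns $\operatorname{cofib}(B\otimes_AB\to B)$. Thus the role of $k\geq2$ is exactly the connectedness of $S^{k-1}$.
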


\begin{proof}
    Since $A$ is a commutative algebra object of $\mathcal{D}(R)$, the usual change-of-base equivalence identifies $\mathbb{E}_k$-$A$-algebras in $\mathcal{D}(A)$ with $\mathbb{E}_k$-$R$-algebras under $A$ in $\mathcal{D}(R)$. Under this identification, square-zero extensions of $B$ and derivations out of $B$ agree on the two sides. Hence the relative cotangent complex computed in $\mathcal{D}(R)$ is the image of the relative cotangent complex computed in $\mathcal{D}(A)$.
    Since $f$ is \'etale, \cite[Proposition 4.35]{DG1} gives $\mathbb{L}^{\mathbb{E}_k}_{B/A} \simeq 0$ in $\Mod_B^{\E_k}(\mathcal{D}(A))$. Therefore its image in $\Mod_B^{\E_k}(\mathcal{D}(R))$ is also zero.
\end{proof}

\begin{lemma}\label{discretebimodvanishing}
    Let $f_0:A_0\rightarrow B_0$ be an \'etale morphism of Dirac rings. Put
    \[
        B_0^e=B_0\otimes_{A_0}B_0^{\op}
    \]
    and let $\mu:B_0^e\rightarrow B_0$ be multiplication. If $N$ is a graded $B_0$-$B_0$-bimodule whose bimodule structure factors through $\mu$, then
    \[
        \Map_{_{B_0}\BMod_{B_0}}\!\left(\mathbb{L}^{\E_1}_{B_0/A_0},N\right)
    \]
    is contractible. Equivalently, all corresponding Ext groups vanish.
\end{lemma}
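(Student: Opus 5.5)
The plan is to use the standard description of the $\E_1$-cotangent complex via the augmentation ideal. For an $\E_1$-algebra $B_0$ over $A_0$ in $\mathcal{D}(A_0)$, Lurie's computation \cite[7.3.5]{HA} gives a cofiber sequence of $B_0$-$B_0$-bimodules
\[
\mathbb{L}^{\E_1}_{B_0/A_0}\longrightarrow B_0^e\xrightarrow{\ \mu\ } B_0 ,
\]
so that $\mathbb{L}^{\E_1}_{B_0/A_0}\simeq I:=\operatorname{fib}(\mu)$ as left $B_0^e$-modules. Since $f_0$ is \'etale, it is flat, so $B_0^e=B_0\otimes_{A_0}B_0$ is discrete and can be computed underived. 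Moreover $\mu$ is surjective, so $I$ is the discrete kernel ideal of $\mu$. The mapping space we need is therefore $\Map_{B_0^e}(I,N)$ in the derived category of graded $B_0^e$-modules. Here $N$ is a $B_0^e$-module pulled back along $\mu$, i.e.\ $N=\mu_*N'$ for a graded $B_0$-module $N'$. We use $B_0^{\op}\cong B_0$, valid because $B_0$ is graded commutative with the Koszul sign.

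The key algebraic input is that $\mu$ is itself \'etale, hence flat, and that $B_0^e\to B_0$ is a surjective flat map of finite presentation, which forces it to be the projection onto a direct factor. I would argue as follows. The map $\mu$ is \'etale because it is a map between \'etale $B_0$-algebras (via the first factor), and \'etale maps satisfy cancellation, which follows formally from the lifting property in Definition \ref{defet}. Next, $I/I^2\cong\Omega^1_{B_0/A_0}$, and this vanishes: by the unique lifting property, $f_0$ has no nonzero derivations into square-zero extensions. The ideal $I$ is finitely generated since $B_0$ is finitely presented. Nakayama then gives an idempotent $e\in B_0^e$ with $I=eB_0^e$, and this idempotent must be even.

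With this in hand, $B_0^e\simeq B_0\times eB_0^e$ as rings and $I$ is the second factor, a projective module. Hence
\[
\Map_{B_0^e}(I,\mu_*N')\simeq \Map_{B_0^e}(eB_0^e,\mu_*N')\simeq e\cdot\mu_*N' .
\]
This vanishes because $e$ acts on $\mu_*N'$ through $\mu(e)=0$. So all Ext groups vanish and the mapping space is contractible. An alternative route that avoids idempotents is the adjunction $\Map_{B_0^e}(I,\mu_*N')\simeq\Map_{B_0}(B_0\otimes^{L}_{B_0^e}I,N')$ together with $B_0\otimes^L_{B_0^e}I\simeq I/I^2\oplus(\text{higher Tor})$. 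The higher Tor terms vanish by flatness of $\mu$, and $I/I^2=0$.

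The main obstacle is establishing in the graded-commutative (Dirac) setting that $\mu$ is flat, or equivalently that $I$ is generated by an idempotent. Signs and odd-degree elements must be handled with care, and one must check that $I$ is finitely generated so that Nakayama applies. I would first try to deduce flatness of $\mu$ directly from the flatness and unramifiedness results in \cite[Ch.~4]{DG1}. If that fails, I would prove the idempotent statement by hand, using that $I/I^2=0$ for a finitely generated ideal, and then use the determinant trick for graded-commutative rings on the even part.
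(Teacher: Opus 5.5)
Your proposal is correct. The ``alternative route'' in your third paragraph is exactly the paper's proof: adjunction along $\mu$, vanishing of the higher $\operatorname{Tor}$ terms by flatness of $\mu$, and $B_0\otimes_{B_0^e}I\cong I/I^2\cong\Omega^1_{B_0/A_0}=0$. The paper only asserts that the diagonal is flat. Your argument supplies the missing justification: over $B_0$ via the first factor, $\mu$ is a map between \'etale algebras, hence formally \'etale by cancellation for the unique lifting property. It is finitely presented because $I$ is finitely generated, and therefore flat by \cite{DG1}.

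Your primary route via an idempotent is a genuinely different argument. It proves the stronger structural fact that $\mathbb{L}^{\E_1}_{B_0/A_0}\simeq I=eB_0^e$ is a projective $B_0^e$-module. This module lives on the factor of $B_0^e\cong B_0\times eB_0^e$ complementary to the diagonal, so vanishing against anything pulled back along $\mu$ is immediate. The paper's route, by contrast, uses only flatness and never identifies $I$.

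The obstacle you anticipate in that route is harmless. Once $\mu$ is flat, $B_0=B_0^e/I$ is a finitely presented flat, hence projective, graded $B_0^e$-module. So $B_0^e\to B_0$ splits $B_0^e$-linearly, and $I=eB_0^e$ for an idempotent $e$ of degree zero, which is therefore central. No determinant trick with odd entries is needed. The direct Nakayama argument from $I=I^2$ also works if you first eliminate the odd-degree generators using the adjugate of the corresponding diagonal block, whose entries are even. This reduces the determinant trick to a matrix with even entries.

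Since flatness of $\mu$ is required in both routes and already suffices for the adjunction argument, the idempotent step is optional.
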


\begin{proof}
    Let $I=\ker(\mu)$. By \cite[7.3.5.1]{HA}, the associative relative cotangent complex $\mathbb{L}^{\E_1}_{B_0/A_0}$ is represented by $I$ as a left module over $B_0^e$, equivalently as a $B_0$-$B_0$-bimodule. Since $f_0$ is \'etale, $B_0$ is flat over $A_0$ and the diagonal
    \[
        \mu:B_0\otimes_{A_0}B_0\longrightarrow B_0
    \]
    is flat. Thus restriction of scalars along $\mu$ has the expected left adjoint
    \[
        B_0\otimes^{\mathbb L}_{B_0^e}(-):D(B_0^e)\longrightarrow D(B_0).
    \]
    If $N$ factors through $\mu$, then $N\simeq \mu_*\overline N$ for a graded $B_0$-module $\overline N$. The adjunction gives
    \[
        \Map_{B_0^e}(I,N)
        \simeq
        \Map_{B_0}\!\left(B_0\otimes^{\mathbb L}_{B_0^e}I,\overline N\right).
    \]
    The flatness of the diagonal removes higher Tor terms, and therefore
    \[
        B_0\otimes^{\mathbb L}_{B_0^e}I
        \simeq
        B_0\otimes_{B_0^e}I
        \simeq
        I/I^2.
    \]
    Finally $I/I^2\simeq\Omega^1_{B_0/A_0}$, and this module vanishes for \'etale morphisms of Dirac rings by \cite[4.24]{DG1}. Hence the displayed mapping space is contractible. Suspensions and grading shifts of $N$ do not affect the argument.
\end{proof}

In order to capture the information in $\pi_*$ that is not determined by the behavior of $\pi_0$, we make the following definition:

\begin{definition}\label{defpi*et}
Let $1 \leq k \leq \infty$.
\begin{enumerate}
    \item A map $\phi : A \rightarrow B$ of $\mathbb{E}_{k}$-ring spectra is called $\pi_{*}$-\'etale or a $\pi_{*}$-\'etale extension of $A$ if the induced map $\pi_*(\phi) : \pi_*(A) \rightarrow \pi_*(B)$
     of Dirac rings is \'etale (see Definition \ref{defet}).
     \item More generally, given an $\mathbb{E}_{k+1}$-ring spectrum $R$, a map $\phi : A \rightarrow B$ of $\mathbb{E}_{k}$-$R$-algebras is called $\pi_{*}$-\'etale if the underlying map of $\mathbb{E}_k$-ring spectra is $\pi_{*}$-\'etale.
\end{enumerate}

\end{definition}

\begin{rmk}
    Here, for $k = 1$, the condition that the homotopy groups form a Dirac ring is part of the definition.
\end{rmk}

\begin{definition}
    We call an $\E_1$-ring spectrum $\pi_*$-Dirac if its homotopy groups form a Dirac ring. Clearly, (the underlying $\E_1$-ring spectrum of) an $\E_k$-ring spectrum is $\pi_*$-Dirac for $k \geq 2$.
\end{definition}

      Given $\psi: A \rightarrow C$, a map of $\pi_*$-Dirac $\E_k$-$R$-algebras, let
      \[
        \Alg_{\mathbb{E}_{k}}(\LMod_{R})^{\pi_{*}\text{-\'et}}_{A//C}
        \subset
        \Alg_{\mathbb{E}_{k}}(\LMod_{R})_{A//C}
      \]
      denote the full subcategory spanned by those maps $\phi: A \rightarrow B$ of $\E_k$-$R$-algebras over $C$ which are $\pi_{*}$-\'etale extensions. In other words, an object of
      $\Alg_{\mathbb{E}_{k}}(\LMod_{R})^{\pi_{*}\text{-\'et}}_{A//C}$ is a commutative diagram
      $$\begin{tikzcd}
	& B \\
	A && C
	\arrow[from=1-2, to=2-3]
	\arrow["\phi", from=2-1, to=1-2]
	\arrow["\psi", from=2-1, to=2-3]
\end{tikzcd}$$
      of $\pi_*$-Dirac $\E_k$-$R$-algebras, where $\phi$ is $\pi_*$-\'etale.
      We also denote \[\Alg_{\mathbb{E}_{k}}(\LMod_{R})^{\pi_{*}\text{-\'et}}_{A/} \simeq \Alg_{\mathbb{E}_{k}}(\LMod_{R})^{\pi_{*}\text{-\'et}}_{A//0}\]
      when $C \simeq 0$ is a terminal object.

      On the algebraic side, for a map $A_*\rightarrow C_*$ of Dirac rings, we write
      \[
      \left(\CAlg_{A_*}^{\heartsuit}\right)^{\text{\'et}}_{/C_*}
      :=
      \CAlg_{A_*}^{\heartsuit,\text{\'et}}
      \times_{\CAlg_{A_*}^{\heartsuit}}
      \left(\CAlg_{A_*}^{\heartsuit}\right)_{/C_*}.
      \]
      Thus an object is a triangle
      \[
      \begin{tikzcd}
          & B_* \ar[dr] \\
          A_* \ar[ur] \ar[rr] && C_*
      \end{tikzcd}
      \]
      of Dirac rings in which $A_*\rightarrow B_*$ is \'etale.

\begin{example}[Finite Galois extensions; \cite{baker2005realizability}]
    Let $R$ be an $\mathbb E_\infty$-ring spectrum, let $G$ be a finite group, and let
    \[
        S\in \CAlg(\LMod_R)^{BG}
    \]
    be a commutative $R$-algebra equipped with a $G$-action. Suppose that $|G|$ is invertible in $\pi_0R$, that $\pi_*S$ is a projective graded $\pi_*R$-module, and that $S$ is $G$-Galois over $R$ in the following sense:
    \begin{enumerate}
        \item the canonical map $R\to S^{hG}$ is an equivalence in $\CAlg(\LMod_R)$;
        \item the descent map
        \[
            S\otimes_R S \longrightarrow \prod_{g\in G} S,
            \qquad
            x\otimes y\longmapsto (x\cdot g(y))_{g\in G},
        \]
        is an equivalence of commutative $S$-algebras.
    \end{enumerate}
    Then the unit $R\to S$ is $\pi_*$-\'etale.
\end{example}

\begin{example}
    Let $p$ be an odd prime, and let
    \[
        \eta : L \longrightarrow KU_{(p)}
    \]
    denote the inclusion of the $p$-local periodic Adams summand $L=E(1)$ into $p$-local complex $K$-theory. Passing to $p$-completions, the Adams operations give an action of $C_{p-1}$ on $KU_p$ over $L_p$, and this exhibits $KU_p$ as a $C_{p-1}$-Galois extension, hence a $\pi_*$-\'etale extension of $L_p$. By a straightforward computation, identifying $v_1$ with $\beta^{p-1}$, the induced morphism on homotopy groups is
    \[
        \Z_p[v_1^{\pm 1}] \simeq \Z_p[\beta^{\pm (p-1)}]
        \longrightarrow
        \Z_p[\beta^{\pm 1}].
    \]
    One can see that the map $L_p \rightarrow KU_p$ is not an \'etale morphism in the sense of \cite{HA}.
\end{example}

Now we are ready to state our main theorem:
\begin{theorem}[$\pi_*$-\'etale rigidity] \label{k+1}
    Let $R$ be an $\mathbb{E}_{k+1}$-ring spectrum, and let $A \rightarrow C$ be a map of $\pi_*$-Dirac $\mathbb{E}_k$-$R$-algebras, where $1\leq k \leq \infty$. The functor $\pi_{*} : \Mod_{R} \simeq \Mod_{R}(\Sp) \rightarrow \Mod_{\pi_{*}(R)}(\operatorname{grAb})$ induces an equivalence of categories
    $$\Alg_{\mathbb{E}_{k}}(\Mod_{R})^{\pi_{*}\text{-\'et}}_{A//C} \xrightarrow{\simeq}  \left(\CAlg_{\pi_{*}(A)}^{\heartsuit}\right)^{\text{\'et}}_{/\pi_*(C)},$$
    where the right-hand side is the relative category defined above: its objects are triangles $\pi_*(A)\to B_*\to\pi_*(C)$ in which $\pi_*(A)\to B_*$ is an \'etale Dirac map.
\end{theorem}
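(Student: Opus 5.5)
We transport the statement into synthetic $R$-modules and compare both sides through the relative Goerss--Hopkins tower. Via the restricted Yoneda embedding $\nu$, the category $\Alg_{\E_k}(\Mod_R)$ is identified with $\Alg_{\E_k}(\mathcal M_\infty)$, the $\E_k$-algebras in periodic $\nu R$-modules. Here we use that $\nu$ is $\E_k$-monoidal on the relevant flat objects and that $\operatorname{Syn}_R^{per}\simeq\LMod_R$. Under this identification $\pi_0(\nu B)\simeq\pi_*(B)$. We also identify $\Alg_{\E_k}(\mathcal M_0)$ with $\E_k$-algebras in $\LMod_{\pi_*R}(\operatorname{grAb})$.

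The first check is that $\pi_*$-Dirac algebras land in the commutative part. For $k\ge 2$ this is automatic, and for $k=1$ it is the $\pi_*$-Dirac hypothesis. Moreover, since the space of $\E_k$-structures on a discrete commutative object compatible with a given multiplication is contractible on the relevant \'etale subcategory, the target category can be taken to be $(\CAlg^{\heartsuit}_{\pi_*A})^{\text{\'et}}_{/\pi_*C}$.

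For each $0\le n\le\infty$ we define $\mathcal E_n\subset \Alg_{\E_k}(\mathcal M_n)_{A_n//C_n}$ as the full subcategory of objects $A_n\to B\to C_n$ whose $\pi_0$ is \'etale over $\pi_0A=\pi_*A$. Here $A_n,C_n$ are the images of $\nu A,\nu C$. Then $\mathcal E_\infty=\varprojlim\mathcal E_n$ by completeness of $\operatorname{Syn}_R$, $\mathcal E_\infty$ is the left-hand side of the theorem, and $\mathcal E_0$ is the right-hand side. It therefore suffices to show that each restriction $\mathcal E_n\to\mathcal E_{n-1}$ is an equivalence.

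\emph{Essential surjectivity.} Given $B\in\mathcal E_{n-1}$ with structure maps $A_{n-1}\to B\to C_{n-1}$, we must lift $B$ together with both maps. We do this with a relative obstruction theory. By Proposition~\ref{thetasect} a lift under $A_n$ and over $C_n$ amounts to a section of the square-zero extension $\Theta B\to B$ of Proposition~\ref{square-zero2}, compatible with the given sections for $A_n$ and $C_n$. This is a relative version of the argument in Proposition~\ref{obobj}. The resulting obstruction lives in $\pi_0\Map^{\E_k}_{B}(\mathbb L^{\E_k}_{B/A_{n-1}},\Sigma^{n+2}\pi_0B[-n])$, which is the relative cotangent complex term. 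The compatibility with $C$ is handled by taking fibres over the corresponding space for $C$. Lemma~\ref{basechange} applied to the cocartesian square obtained by tensoring down to $A_0=\pi_0$ identifies this space with $\Map^{\E_k}_{B_0}(\mathbb L^{\E_k}_{B_0/\pi_*A},\Sigma^{n+2}B_0[-n])$, where $B_0=\pi_*$-data. We use here that $B$ is a potential stage, so $A_0\otimes_{A_{n-1}}B\simeq B_0$. For $k\ge2$ this vanishes by Proposition~\ref{discretecotang}. For $k=1$, $\E_1$-$B_0$-modules are bimodules, and the coefficient $\Sigma^{n+2}B_0[-n]$ is a bimodule factoring through $\mu$ because $B_0$ is commutative. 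So Lemma~\ref{discretebimodvanishing} gives vanishing. Hence the obstruction vanishes and the space of choices is connected.

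\emph{Full faithfulness.} For $B,B'\in\mathcal E_n$ and a map $\phi$ of their images, the relative analogue of Proposition~\ref{obmor} gives the fibre of the mapping spaces of $\mathcal E_n\to\mathcal E_{n-1}$ over $\phi$. This fibre is a space of nullhomotopies in $\Map^{\E_k}_{B_0}(\mathbb L^{\E_k}_{B_0/\pi_*A},\Sigma^{n+1}B'_0[-n])$, together with a compatible fibre for the map to $C$. The module $B'_0$ is regarded as a $B_0$-module via $\phi$; for $k=1$ it is again a bimodule through $\mu$. This space is contractible by the same vanishing, so the fibres are contractible and the functor is fully faithful.

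The main obstacle is establishing the relative obstruction theory. Concretely, one must show that lifting under $A_n$ and over $C_n$ is governed by the relative cotangent complex $\mathbb L_{B/A}$ rather than the absolute one, and that this complex base changes to $\mathbb L_{B_0/A_0}$. My approach would be to work in the slice $\Alg_{\E_k}(\LMod_{A_{n-1}})_{A//}$ and use that square-zero extensions there are classified by derivations relative to $A$. This follows from the cofibre sequence $B\otimes\mathbb L_A\to\mathbb L_B\to\mathbb L_{B/A}$ combined with the fact that the sections over $A$ are already fixed. Lemma~\ref{basechange} then supplies the reduction to $\pi_0$.
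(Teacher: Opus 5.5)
Your argument is correct, and it runs on the same machinery as the paper. That machinery is: relative object and morphism obstructions valued in $\Map(\mathbb L^{\E_k}_{B_{n-1}/A_{n-1}},-)$; reduction of these spaces to stage zero; vanishing via Proposition~\ref{discretecotang} for $k\ge2$ and Lemma~\ref{discretebimodvanishing} for $k=1$; and convergence of the tower.

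The difference is where the target $C$ enters.
\begin{itemize}
\item You run one tower of over--under categories $\Alg_{\E_k}(\mathcal M_n)_{A_n//C_n}$ and absorb compatibility with $C$ into each inductive step.
\item The paper runs the tower only for under-categories (Proposition~\ref{stagewiserigidity}, Theorem~\ref{underrigidity}). It separately proves that $\Map_{A/}(B,C)\to\Map_{\pi_*A}(\pi_*B,\pi_*C)$ is an equivalence (Proposition~\ref{ffness}). It then deduces the $A//C$ statement formally from the fiberwise criterion for right fibrations.
\end{itemize}
The paper's route isolates the case $C=0$ and mapping-space rigidity as standalone results, and uses Lemmas~\ref{relobjob} and~\ref{relmorob} exactly as stated. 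Yours avoids the right-fibration step but needs a lifting statement for whole triangles.

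Your phrase ``taking fibres over the corresponding space for $C$'' should be made precise as follows. The fibre of $\mathcal E_n\to\mathcal E_{n-1}$ over a triangle fibres over the contractible space of object lifts from Lemma~\ref{relobjob}. Its fibres are the morphism-lift spaces of Lemma~\ref{relmorob} for $B_{n-1}\to C_{n-1}$, with coefficients $\Sigma^{n+1}C_0[-n]$. For $k=1$ these are contractible only because $C$ is $\pi_*$-Dirac, so $C_0$ is a $B_0$-bimodule through $\mu$. You should say this explicitly, as you did for $B_0$.

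One correction to your reduction to stage zero. The potential-stage identity is $\mathbf R_0\otimes_{\mathbf R_{n-1}}B_{n-1}\simeq B_0$, where $\mathbf R=\nu R$ is the shift algebra. It is not $A_0\otimes_{A_{n-1}}B$ for your algebra $A$. Moreover, for finite $k$ this extension of scalars is not in general a pushout of $\E_k$-algebras, so Lemma~\ref{basechange} does not apply literally. Instead, argue as in Lemma~\ref{relativebasechange}:
\begin{itemize}
\item the $\E_k$-monoidal left adjoint $\mathbf R_0\otimes_{\mathbf R_{n-1}}(-)$ preserves absolute cotangent complexes;
\item the transitivity cofiber sequence then identifies the image of $\mathbb L^{\E_k}_{B_{n-1}/A_{n-1}}$ with $\mathbb L^{\E_k}_{B_0/A_0}$.
\end{itemize}
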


Before starting the proof, we first state a few consequences showing that our notion of $\pi_*$-\'etaleness is at least as good as Lurie's notion of \'etaleness.

\begin{cor}\label{autoEk}
    Let $A$ be an $\mathbb{E}_{k+1}$-$R$-algebra for an $\mathbb{E}_{k+2}$-ring spectrum $R$, $1 \leq k \leq \infty$. Let $\eta : A \rightarrow B$ be a $\pi_*$-\'etale extension of $\mathbb{E}_1$-ring spectra. Then $\eta$ exhibits $B$ as a canonical $\mathbb{E}_k$-$A$-algebra. In particular, $\eta$ is canonically a map of $\mathbb{E}_{k+1}$-$R$-algebras.
\end{cor}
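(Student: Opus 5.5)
We apply Theorem \ref{k+1} twice: once to produce the $\E_{k+1}$-structure on $B$, and once to identify it with the given $\E_1$-map, comparing both on homotopy groups. The setup is as follows. Since $R$ is $\E_{k+2}$, the category $\Alg_{\E_{k+1}}(\Mod_R)$ is available, and Theorem \ref{k+1} applies with $k+1$ in place of $k$ (and $C=0$). The input is $A\in\Alg_{\E_{k+1}}(\Mod_R)$, which is $\pi_*$-Dirac because $k+1\geq 2$. It gives an equivalence
\[
\Alg_{\E_{k+1}}(\Mod_R)^{\pi_*\text{-\'et}}_{A/}\xrightarrow{\simeq}\CAlg^{\heartsuit,\text{\'et}}_{\pi_*(A)}.
\]
The map $\pi_*(\eta):\pi_*(A)\to\pi_*(B)$ is an \'etale map of Dirac rings; here $\pi_*(B)$ is a Dirac ring by the definition of $\pi_*$-\'etale. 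Essential surjectivity therefore produces an $\E_{k+1}$-$R$-algebra $B'$ and a $\pi_*$-\'etale map $\eta':A\to B'$ of $\E_{k+1}$-$R$-algebras, together with an isomorphism $\pi_*(B')\cong\pi_*(B)$ of Dirac $\pi_*(A)$-algebras.

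The main step is to identify $\eta'$ with the given $\eta$. Both are $\pi_*$-\'etale $\E_1$-maps out of the underlying $\E_1$-ring of $A$, so we use Theorem \ref{k+1} once more in the case $k=1$, over the sphere and with $C=0$. The sphere is $\E_\infty$, hence $\E_2$, so this case applies, and it identifies $\Alg_{\E_1}(\Sp)^{\pi_*\text{-\'et}}_{A/}$ with the same category $\CAlg^{\heartsuit,\text{\'et}}_{\pi_*(A)}$. The forgetful functor from $\E_{k+1}$-$R$-algebras under $A$ to $\E_1$-rings under $A$ commutes with the two $\pi_*$ functors. Hence the underlying $\E_1$-map of $\eta'$ and the map $\eta$ have isomorphic images on the algebraic side. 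Full faithfulness then gives an equivalence $B'\simeq B$ of $\E_1$-rings under $A$, unique up to contractible choice, since the algebraic side is a $1$-category and the chosen isomorphism on $\pi_*$ singles out a point. Transporting along this equivalence endows $B$ with an $\E_{k+1}$-$R$-algebra structure for which $\eta$ is a map of $\E_{k+1}$-$R$-algebras; this is the second assertion of the corollary.

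For the assertion that $B$ is canonically an $\E_k$-$A$-algebra, we would invoke the standard fact that $\E_k$-$A$-algebras are equivalent to $\E_{k+1}$-algebras under $A$, more precisely to $\E_k$-algebras in $\LMod_A$ when $A$ is $\E_{k+1}$ (\cite[HA, 5.3]{HA}-style centralizer/Dunn argument). The map $\eta'$ is an $\E_{k+1}$-map out of $A$, so $B'$ is an $\E_k$-algebra in $\LMod_A(\Mod_R)$, and the identification above carries this structure over to $B$.

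We take canonicity to mean that the structure is unique up to contractible choice. Any two such $\E_{k+1}$-lifts of $\eta$ are $\pi_*$-\'etale and have the same image on $\pi_*$, so by the full faithfulness in Theorem \ref{k+1} for $k+1$ the space of lifts is contractible. The main obstacle we expect is the middle step: checking that the forgetful functor is compatible with the equivalences of Theorem \ref{k+1} at levels $k+1$ and $1$, and keeping track of the change of base ring from $R$ to the sphere. This should follow because both functors are $\pi_*$ on underlying spectra.
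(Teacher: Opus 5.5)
Your proof is correct in outline, but its second leg differs from the paper's. The paper never passes to level $k+1$. It composes the $k=1$ case of Theorem \ref{k+1} over $\Sp$ with the absolute rigidity theorem \cite[4.33]{DG1} over the $\E_{k+1}$-ring $A$ itself:
\[
\Alg_{\E_1}(\Sp)^{\pi_*\text{-\'et}}_{A/}\simeq\CAlg^{\heartsuit,\text{\'et}}_{\pi_*(A)}\simeq\Alg_{\E_k}(\Mod_A)^{\pi_*\text{-\'et}}.
\]
Hence the forgetful functor from $\pi_*$-\'etale $\E_k$-$A$-algebras to $\pi_*$-\'etale $\E_1$-rings under $A$ is an equivalence. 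This gives the $\E_k$-$A$-algebra structure on $B$, together with its uniqueness among all $\E_k$-$A$-structures refining $\eta$.

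You instead apply the relative theorem at level $k+1$ over $R$, which is where $R$ being $\E_{k+2}$ is used. You conclude that $\Alg_{\E_{k+1}}(\Mod_R)^{\pi_*\text{-\'et}}_{A/}\to\Alg_{\E_1}(\Sp)^{\pi_*\text{-\'et}}_{A/}$ is an equivalence. This proves the last sentence of the corollary directly: $\eta$ lifts, uniquely up to contractible choice, to a map of $\E_{k+1}$-$R$-algebras. The paper's chain does not by itself deliver that clause, since an $\E_k$-$A$-algebra carries only an $\E_k$-structure.

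The one incorrect step is the ``standard fact'' you invoke for the $\E_k$-$A$-algebra clause. For finite $k$, $\E_k$-algebras in $\LMod_A$ are not equivalent to $\E_{k+1}$-algebras under $A$. For $A=R=\mathbb S$ and $k=1$, that claim would identify $\E_1$-rings with $\E_2$-rings. Rather, $\E_k$-$A$-algebras are $\E_k$-algebras $B$ equipped with an $\E_{k+1}$-map from $A$ to the $\E_k$-center of $B$, as used in the proof of Corollary \ref{cor:LubinTateE4orientation}.

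What does exist is a functor $\Alg_{\E_{k+1}}(\Mod_R)_{A/}\to\Alg_{\E_k}(\LMod_A(\Mod_R))$, and that suffices for existence. Your canonicity argument, however, only controls $\E_{k+1}$-lifts. It does not rule out $\E_k$-$A$-algebra structures refining $\eta$ that do not come from an $\E_{k+1}$-structure.

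To obtain canonicity of the $\E_k$-$A$-structure in that stronger sense, add the paper's leg. This is rigidity for $\pi_*$-\'etale $\E_k$-$A$-algebras over the base $A$: \cite[4.33]{DG1}, i.e.\ Theorem \ref{k+1} with $R$ replaced by $A$ and source the unit. Its compatibility with the forgetful functor to $\Alg_{\E_1}(\Sp)_{A/}$ is again immediate, since every functor involved is $\pi_*$ of underlying spectra.
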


\begin{proof}
    Combining Theorem~\ref{k+1} and~\cite[4.33]{DG1}, we have the canonical equivalences
    \[
        \Alg_{\mathbb{E}_{1}}(\Sp)^{\pi_{*}\text{-\'et}}_{A/}
        \xrightarrow{\simeq}
        \CAlg_{\pi_{*}(A)}^{\heartsuit,\text{\'et}}
        \xleftarrow{\simeq}
        \Alg_{\mathbb{E}_{k}}(\Mod_{A})^{\pi_{*}\text{-\'et}}.
    \]
    Here the last category denotes the category of
    $\mathbb{E}_k$-$A$-algebras which are $\pi_*$-\'etale over $A$.
\end{proof}

We now prove Theorem \ref{k+1} directly by relative obstruction theory in
synthetic $R$-modules.  Let $R$ be an $\mathbb E_{k+1}$-ring spectrum,
$1\leq k\leq\infty$.  The category $\operatorname{Syn}_R$ is complete and
$\mathbb E_k$-monoidal, its heart is canonically equivalent to the category of
graded $\pi_*R$-modules, and periodic objects identify $\mathbb E_k$-monoidally
with $R$-modules in spectra.  Although \cite[Section 5]{pstragowski2021abstract}
is formulated for a symmetric monoidal prestable category, the construction of
the tower for $\mathbb E_k$-algebras only uses the ambient $\mathbb E_k$-monoidal
structure; this is recorded explicitly in \cite[Proof of Theorem 4.33]{DG1}.

Fix $A\in\Alg_{\mathbb E_k}(\Mod_R)$ and write $A_n$ for its image in the
category of potential $n$-stages.  We first formulate the relative forms of the
object and morphism obstruction theories for arbitrary $k$.  When $k=1$, the
category of $\mathbb E_1$-operadic modules is the category of bimodules.

\begin{lemma}[Relative obstruction for morphisms]
\label{relmorob}
    Let $n\geq 1$, let $A_n,B_n,D_n\in\Alg_{\mathbb E_k}(\mathcal M_n)$,
    and suppose that maps $A_n\to B_n$ and $A_n\to D_n$ have been fixed.
    Let
    \[
        g_{n-1}:B_{n-1}\longrightarrow D_{n-1}
    \]
    be a map under $A_{n-1}$.  Then there is a relative obstruction class
    \[
        \eta_{B/A}(g_{n-1})\in
        \Map_{\Mod^{\mathbb E_k}_{B_{n-1}}}\!\left(
            \mathbb L^{\mathbb E_k}_{B_{n-1}/A_{n-1}},
            \Sigma^{n+1}D_0[-n]
        \right)
    \]
    such that the space of lifts of $g_{n-1}$ to a map $B_n\to D_n$ under
    $A_n$ is canonically equivalent to the path space
    \[
        P_{0,\eta_{B/A}(g_{n-1})}
        \Map_{\Mod^{\mathbb E_k}_{B_{n-1}}}\!\left(
            \mathbb L^{\mathbb E_k}_{B_{n-1}/A_{n-1}},
            \Sigma^{n+1}D_0[-n]
        \right).
    \]
\end{lemma}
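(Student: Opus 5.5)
The strategy is to reduce the relative statement to the absolute morphism obstruction theory (Proposition~\ref{obmor}), or rather to the square-zero mechanism behind it, and to pass from absolute to relative cotangent complexes through the standard fiber sequence. First I will reformulate the lifting problem. By Proposition~\ref{thetasect} and Proposition~\ref{square-zero2}, giving $D_n$ over $D_{n-1}$ is the same as giving a section of the square-zero extension $\pi:\Theta D_{n-1}\to D_{n-1}$ with fiber $\Sigma^{n}D_0[-n]$. This extension is classified by a derivation
\[
  d_D:\mathbb L^{\mathbb E_k}_{D_{n-1}}\to\Sigma^{n+1}D_0[-n].
\]
A map $B_n\to D_n$ lifting $g_{n-1}$ then corresponds to a map $B_{n-1}\to\Theta D_{n-1}$ lifting $g_{n-1}$, compatible with the data already fixed for $B_n$. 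The space of lifts in Proposition~\ref{obmor} is accordingly identified with the space of nullhomotopies of $\eta=g_{n-1}^*d_D-(\text{the class coming from }B)$, viewed as a map $\mathbb L^{\mathbb E_k}_{B_{n-1}}\to\Sigma^{n+1}D_0[-n]$. Here I use that the target is discrete up to shift, hence an $A_0$-module, so base change along $B_{n-1}\to B_0$ is harmless.

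Second, I will add the condition of working under $A$. The space of lifts under $A_n$ is the fiber of the map
\[
  \mathcal L(g_{n-1})\longrightarrow \mathcal L(g_{n-1}\circ(A_{n-1}\to B_{n-1}))\times\ldots
\]
over the fixed lift $A_n\to D_n$. By the naturality remark after Proposition~\ref{obmor}, this map is restriction along $\mathbb L^{\mathbb E_k}_{A_{n-1}}\to\mathbb L^{\mathbb E_k}_{B_{n-1}}$ (after base change to $B_{n-1}$) on path spaces. Take the cofiber sequence
\[
  B_{n-1}\otimes\mathbb L^{\mathbb E_k}_{A_{n-1}}\to\mathbb L^{\mathbb E_k}_{B_{n-1}}\to\mathbb L^{\mathbb E_k}_{B_{n-1}/A_{n-1}}
\]
in $\Mod^{\mathbb E_k}_{B_{n-1}}$; for $k=1$ this is read in bimodules. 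Applying $\Map(-,\Sigma^{n+1}D_0[-n])$ gives a fiber sequence of mapping spaces. Since $\eta$ restricts to the class for $A$, and that class is nullhomotopic via the given lift $A_n\to D_n$, the class $\eta$ descends to a class $\eta_{B/A}(g_{n-1})$ on the relative cotangent complex. A fiber of path spaces over a fixed path is the path space of the fiber, which yields the asserted equivalence
\[
  \mathcal L_{A}(g_{n-1})\simeq P_{0,\eta_{B/A}(g_{n-1})}\Map\bigl(\mathbb L^{\mathbb E_k}_{B_{n-1}/A_{n-1}},\Sigma^{n+1}D_0[-n]\bigr).
\]

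The main obstacle is the second step. I must verify that the equivalence of Proposition~\ref{obmor} is natural enough in the source, namely compatible with precomposition by $A_n\to B_n$, and not only in the target as the remark states. The cleanest route is to rederive Proposition~\ref{obmor} functorially. I will express both the absolute and the relative lift spaces as mapping spaces in $\Alg_{\mathbb E_k}$ over $D_{n-1}$ (respectively under $A_{n-1}$) into $\Theta D_{n-1}$, a pullback of $D_{n-1}\oplus\Sigma^{n+1}D_0[-n]$ along $d_D$. I will then use that, in $\Alg_{\mathbb E_k}(\mathcal C)_{A/}$, maps into a square-zero extension are corepresented by the relative cotangent complex; this is the universal property of $\mathbb L_{B/A}$, and Lemma~\ref{basechange} supplies compatibility with the base change to $B_0$. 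With this formulation the relative statement follows directly, without comparing fibers by hand.
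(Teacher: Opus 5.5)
Your proposal is correct and follows essentially the paper's argument: absolute classes $\eta_B$ and $\eta_A$ from Proposition~\ref{obmor}, the identification of the restriction of $\eta_B$ with $\eta_A$, the nullhomotopy of $\eta_A$ supplied by $A_n\to D_n$, the transitivity cofiber sequence, and passage to the fiber of path spaces. The paper justifies the identification of $\eta_B$ with $\eta_A$ only by ``functoriality of square-zero extensions and derivations,'' which is exactly the source-naturality you flag. One slip in your first step: by Proposition~\ref{square-zero2}, the extension $\Theta D_{n-1}\to D_{n-1}$ has fiber $\Sigma^{n+1}D_0[-n]$ and is classified by a map $\mathbb L^{\mathbb E_k}_{D_{n-1}}\to\Sigma^{n+2}D_0[-n]$. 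The degree-$(n+1)$ class is therefore not a difference of classifying derivations. It is the difference of two nullhomotopies of a single degree-$(n+2)$ class, namely the sections $s_D\circ g_{n-1}$ and $\Theta(g_{n-1})\circ s_B$. This does not affect the relative step, which only uses the output of Proposition~\ref{obmor}.
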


\begin{proof}
    Put $M=\Sigma^{n+1}D_0[-n]$ and let
    \[
        X_B=\Map_{\Mod^{\mathbb E_k}_{B_{n-1}}}
        (\mathbb L^{\mathbb E_k}_{B_{n-1}},M)
    \]
    and
    \[
        X_A=\Map_{\Mod^{\mathbb E_k}_{B_{n-1}}}\!\left(
        B_{n-1}\boxtimes_{A_{n-1}}
        \mathbb L^{\mathbb E_k}_{A_{n-1}},M\right).
    \]
    By extension--restriction adjunction, $X_A$ is the mapping space that
    occurs when Proposition \ref{obmor} is applied to the restriction of
    $g_{n-1}$ along $A_{n-1}\to B_{n-1}$.  Proposition \ref{obmor} gives an
    absolute obstruction $\eta_B\in X_B$ and an obstruction
    $\eta_A\in X_A$ for the restricted map.  Functoriality of square-zero
    extensions and derivations identifies the image of $\eta_B$ in $X_A$
    with $\eta_A$.

    The given map $A_n\to D_n$ determines a chosen nullhomotopy of $\eta_A$.
    The transitivity cofiber sequence
    \[
        B_{n-1}\boxtimes_{A_{n-1}}
        \mathbb L^{\mathbb E_k}_{A_{n-1}}
        \longrightarrow
        \mathbb L^{\mathbb E_k}_{B_{n-1}}
        \longrightarrow
        \mathbb L^{\mathbb E_k}_{B_{n-1}/A_{n-1}}
    \]
    therefore identifies the pair consisting of $\eta_B$ and this chosen
    nullhomotopy with a point
    \[
        \eta_{B/A}(g_{n-1})\in
        \Map_{\Mod^{\mathbb E_k}_{B_{n-1}}}
        (\mathbb L^{\mathbb E_k}_{B_{n-1}/A_{n-1}},M).
    \]
    Taking the fiber of the absolute path-space description in Proposition
    \ref{obmor} over the chosen nullhomotopy gives the asserted relative
    path-space description.
\end{proof}

\begin{lemma}[Relative obstruction for objects]
\label{relobjob}
    Let $n\geq 1$, let $A_n$ be a fixed lift of $A_{n-1}$, and let
    $f_{n-1}:A_{n-1}\to B_{n-1}$ be an object of
    $\Alg_{\mathbb E_k}(\mathcal M_{n-1})_{A_{n-1}/}$.  There is a relative
    obstruction class
    \[
        \theta_{B/A}\in
        \Map_{\Mod^{\mathbb E_k}_{B_{n-1}}}\!\left(
            \mathbb L^{\mathbb E_k}_{B_{n-1}/A_{n-1}},
            \Sigma^{n+2}B_0[-n]
        \right)
    \]
    whose space of nullhomotopies is canonically equivalent to the space of
    lifts of $f_{n-1}$ to an object $A_n\to B_n$ of
    $\Alg_{\mathbb E_k}(\mathcal M_n)_{A_n/}$.
\end{lemma}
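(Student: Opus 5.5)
We argue exactly parallel to Lemma~\ref{relmorob}, using the absolute object obstruction of Proposition~\ref{obobj} in place of the morphism obstruction of Proposition~\ref{obmor}, and cutting it down along the fixed lift $A_n$ of $A_{n-1}$. Put $M=\Sigma^{n+2}B_0[-n]$ and consider the two mapping spaces
\[
    Y_B=\Map_{\Mod^{\mathbb E_k}_{B_{n-1}}}(\mathbb L^{\mathbb E_k}_{B_{n-1}},M),
    \qquad
    Y_A=\Map_{\Mod^{\mathbb E_k}_{B_{n-1}}}\!\left(B_{n-1}\boxtimes_{A_{n-1}}\mathbb L^{\mathbb E_k}_{A_{n-1}},M\right).
\]
By the proof of Proposition~\ref{obobj}, the square-zero extension $\pi:\Theta B_{n-1}\to B_{n-1}$ of Proposition~\ref{square-zero2} is classified by a derivation $\theta_B\in Y_B$. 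Lifts of $B_{n-1}$ to $\Alg_{\mathbb E_k}(\mathcal M_n)$ correspond, via Proposition~\ref{thetasect}, to sections of $\pi$, hence to nullhomotopies of $\theta_B$.

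The first step is the relative bookkeeping. The functor $\Theta$ is natural in the potential stage, because it is built from the fixed derivations $d,d_0$ of the base tower. Hence $f_{n-1}$ induces a map of square-zero extensions from $\Theta A_{n-1}\to A_{n-1}$ to $\Theta B_{n-1}\to B_{n-1}$, after pulling back along $f_{n-1}$ and pushing $\Sigma^{n+1}A_0[-n]\to\Sigma^{n+1}B_0[-n]$. Functoriality of derivations \cite[7.4]{HA} then says that the image of $\theta_B$ under restriction $Y_B\to Y_A$ is the composite of the absolute class $\theta_A$ for $A_{n-1}$ with $A_0[-n]\to B_0[-n]$, transported by extension--restriction adjunction. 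The fixed lift $A_n$ is a section of $\Theta A_{n-1}\to A_{n-1}$, so it gives a chosen nullhomotopy $h_A$ of $\theta_A$, and hence of the image of $\theta_B$ in $Y_A$.

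Next, we apply $\Map(-,M)$ to the transitivity cofiber sequence
\[
    B_{n-1}\boxtimes_{A_{n-1}}\mathbb L^{\mathbb E_k}_{A_{n-1}}\to\mathbb L^{\mathbb E_k}_{B_{n-1}}\to\mathbb L^{\mathbb E_k}_{B_{n-1}/A_{n-1}}.
\]
This exhibits $\Map(\mathbb L^{\mathbb E_k}_{B_{n-1}/A_{n-1}},M)$ as the fiber of $Y_B\to Y_A$ over $0$. The pair $(\theta_B,h_A)$ is a point of this fiber, and we define it to be $\theta_{B/A}$. For nullhomotopies, a nullhomotopy of $\theta_{B/A}$ in the fiber is the same as a nullhomotopy of $\theta_B$ in $Y_B$ together with a $2$-cell identifying its image in $Y_A$ with $h_A$.

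On the geometric side, a lift $A_n\to B_n$ of $f_{n-1}$ is a section $s$ of $\Theta B_{n-1}\to B_{n-1}$ together with an identification of $s\circ f_{n-1}$ with the section of $\Theta A_{n-1}$ determined by $A_n$. To see this, write $\Alg_{\mathbb E_k}(\mathcal M_n)_{A_n/}$ as the pullback of $\Alg_{\mathbb E_k}(\Theta\text{-}Sect)$ over the arrow category, using Proposition~\ref{thetasect} applied to arrows; the section-of-$\pi$ picture is compatible with morphisms by the naturality remark after Proposition~\ref{obmor}. Comparing this description with the fiber description of nullhomotopies of $\theta_{B/A}$ gives the claimed equivalence.

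The main obstacle is making the identification ``sections $\simeq$ nullhomotopies of the classifying derivation'' functorial in the arrow $A_{n-1}\to B_{n-1}$, rather than only objectwise. I would handle this by working in the tangent bundle over $\Fun(\Delta^1,\Alg_{\mathbb E_k})$ and using Lemma~\ref{basechange}, so that pulling back along $f_{n-1}$ is a $p$-cocartesian transport. The fiber-over-$h_A$ statement then becomes a pullback of mapping spaces.
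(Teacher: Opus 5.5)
Your proposal is correct and is essentially the paper's proof: the absolute class $\theta_B$ classifying $\Theta B_{n-1}\to B_{n-1}$, the nullhomotopy of its restriction to $B_{n-1}\boxtimes_{A_{n-1}}\mathbb L^{\mathbb E_k}_{A_{n-1}}$ supplied by the fixed lift $A_n$, and the transitivity cofiber sequence packaging this pair into $\theta_{B/A}$, whose nullhomotopies are the sections compatible with the one over $A_{n-1}$ (you spell out the 2-cell bookkeeping that the paper leaves implicit). One small correction to your closing remark: restricting square-zero extensions along $f_{n-1}$ is cartesian, not cocartesian, transport in the tangent bundle, and the naturality you need comes from the functoriality of square-zero extensions in their classifying derivations \cite[7.4.1]{HA}, not from Lemma~\ref{basechange}, which concerns pushout squares.
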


\begin{proof}
    Put $M=\Sigma^{n+2}B_0[-n]$.  By Proposition \ref{thetasect}, a lift of
    $B_{n-1}$ to a potential $n$-stage is the same as a section of
    $\Theta B_{n-1}\to B_{n-1}$.  Proposition \ref{square-zero2} and
    square-zero deformation theory identify the space of such sections with
    the nullhomotopy space of an absolute class
    \[
        \theta_B\in
        X_B:=\Map_{\Mod^{\mathbb E_k}_{B_{n-1}}}
        (\mathbb L^{\mathbb E_k}_{B_{n-1}},M).
    \]
    The fixed lift $A_n$ supplies a chosen nullhomotopy of the restriction of
    $\theta_B$ in
    \[
        X_A:=\Map_{\Mod^{\mathbb E_k}_{B_{n-1}}}\!\left(
        B_{n-1}\boxtimes_{A_{n-1}}
        \mathbb L^{\mathbb E_k}_{A_{n-1}},M\right).
    \]
    Applying $\Map(-,M)$ to the transitivity cofiber sequence identifies this
    compatible pair with the class $\theta_{B/A}$ displayed above.  Its
    nullhomotopies are exactly the sections compatible with the fixed section
    over $A_{n-1}$, hence exactly the required relative lifts.
\end{proof}

The following observation removes any convergence issue from the preceding
obstruction spaces.  It is the key point in the relative argument.

\begin{lemma}[Reduction of relative obstruction spaces to stage zero]
\label{relativebasechange}
    Let $n\geq1$ and let
    \[
        f_{n-1}:A_{n-1}\longrightarrow B_{n-1}
    \]
    be a morphism of potential $(n-1)$-stages for
    $\mathbb E_k$-algebras. If $N$ is an operadic $B_0$-module in the
    derived category of graded $\pi_*R$-modules, regarded by restriction as
    a $B_{n-1}$-module, then there is a canonical equivalence
    \[
    \begin{split}
        &\Map_{\Mod^{\mathbb E_k}_{B_{n-1}}}\!\left(
            \mathbb L^{\mathbb E_k}_{B_{n-1}/A_{n-1}},
            N
        \right)
        \\
        &\qquad\simeq
        \Map_{\Mod^{\mathbb E_k}_{B_0}}\!\left(
            \mathbb L^{\mathbb E_k}_{B_0/A_0},
            N
        \right).
    \end{split}
    \]
\end{lemma}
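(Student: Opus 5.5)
The strategy is to use the base-change formula for cotangent complexes (Lemma \ref{basechange}) together with the extension--restriction adjunction for operadic modules along $B_{n-1}\to B_0$. The key input is that $f_{n-1}$ is a morphism of potential stages. Tensoring with $A_0$ over the base $A_{n-1}$ of the tower therefore sends $A_{n-1}$ to $A_0\otimes_{A_{n-1}}A_{n-1}\simeq \pi_0(A_{n-1})$ and $B_{n-1}$ to $B_0=\pi_0(B_{n-1})$. Here I use that potential stages satisfy $A_0\otimes_{A_{n-1}}M\simeq \pi_0 M$ in the heart, as in the proof of Proposition \ref{obobj}. I will write $\mathrm{base}$ for the base algebra of the tower, to avoid the clash of notation with the algebra $A$.

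\textbf{Step 1: a pushout square.} I will exhibit the square
\[
\begin{tikzcd}
A_{n-1} \ar[r] \ar[d] & B_{n-1} \ar[d] \\
A_0 \ar[r] & B_0
\end{tikzcd}
\]
as a cocartesian square in $\Alg_{\mathbb E_k}$ of the appropriate ambient category. The point is that the truncation $\tau_{\le 0}$, or equivalently the base change $\mathrm{base}_0\otimes_{\mathrm{base}_{n-1}}(-)$, is an $\mathbb E_k$-monoidal left adjoint $\LMod_{\mathrm{base}_{n-1}}\to\LMod_{\mathrm{base}_0}$. It therefore induces a left adjoint on $\mathbb E_k$-algebras that preserves pushouts. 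So I will work in the category of $\mathbb E_k$-algebras in $\LMod_{\mathrm{base}_{n-1}}$ and use the adjunction to pass to $\LMod_{\mathrm{base}_0}$.

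\textbf{Step 2: base change of the cotangent complex.} Let $g\colon B_{n-1}\to B_0$, viewed as extension of scalars along the induced functor. I will show that $g_!\,\mathbb L^{\mathbb E_k}_{B_{n-1}/A_{n-1}}\simeq \mathbb L^{\mathbb E_k}_{B_0/A_0}$, where the right-hand side is computed in $\mathbb E_k$-algebras over $\mathrm{base}_0$, which is the derived category of graded $\pi_*R$-modules.

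The argument is that a symmetric-monoidal-compatible left adjoint $F$ between presentably monoidal categories, with a lax right adjoint $G$, induces an equivalence of derivation spaces. Derivations of $F(B)$ into $N$ correspond to derivations of $B$ into $G(N)$, because $G$ preserves trivial square-zero extensions. This gives $\mathbb L_{F B/F A}\simeq F_!\,\mathbb L_{B/A}$. Alternatively, one can invoke Lemma \ref{basechange} applied to the pushout $A_0\otimes_{A_{n-1}}(-)$, which gives that $\mathbb L$ is compatible with cobase change.

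\textbf{Step 3: the adjunction.} For $N$ an operadic $B_0$-module in the heart-derived category, regarded as a $B_{n-1}$-module by restriction, the extension--restriction adjunction for $\mathbb E_k$-operadic modules gives
\[
\Map_{B_{n-1}}\bigl(\mathbb L_{B_{n-1}/A_{n-1}},N\bigr)\simeq \Map_{B_0}\bigl(g_!\,\mathbb L_{B_{n-1}/A_{n-1}},N\bigr),
\]
and Step 2 identifies the right-hand side with $\Map_{B_0}(\mathbb L_{B_0/A_0},N)$. Canonicity follows from the naturality of the adjunction and of the base-change map.

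The main obstacle is Step 2: matching the two ambient categories. On one side, $\mathbb L_{B_{n-1}/A_{n-1}}$ is computed among $\mathbb E_k$-algebras in $\LMod_{\mathrm{base}_{n-1}}(\operatorname{Syn}_R)$. On the other, $\mathbb L_{B_0/A_0}$ is computed in $\mathcal D(\pi_*R)$ graded. I will handle this by first computing $\mathbb L_{B_0/A_0}$ in $\LMod_{\mathrm{base}_0}(\operatorname{Syn}_R)$. I will then note that $\mathrm{base}_0=\pi_0(\nu R)$-modules in $\operatorname{Sp}(\operatorname{Syn}_R)$ identify $\mathbb E_k$-monoidally with the derived category of the heart $\LMod_{\pi_*R}(\operatorname{grAb})$. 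This is where the shift-algebra structure is used, as in \cite[Proof of Theorem 4.33]{DG1}. Since this identification is a monoidal equivalence, it carries cotangent complexes to cotangent complexes.
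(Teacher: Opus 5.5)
Your main argument is correct but organized differently from the paper's. The paper base-changes the \emph{absolute} cotangent complexes $\mathbb L^{\E_k}_{X/\mathbf R_{n-1}}$, for $X=A_{n-1},B_{n-1}$, along $\Phi_n=\mathbf R_0\boxtimes_{\mathbf R_{n-1}}(-)$. For this it cites \cite[Lemma 7.6]{pstragowski2021abstract} and asserts that the proof carries over to $\E_k$. It then pushes the transitivity cofiber sequence through $\Phi_{n,B}$, identifies the result with the transitivity map of $\mathbf R_0\to A_0\to B_0$, and only then applies extension--restriction.

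Your first argument in Step 2 works directly with \emph{relative} derivations. The algebra-level adjunction $F\dashv G$, plus the fact that the limit-preserving functor $G$ commutes with trivial square-zero extensions (so $B_{n-1}\times_{GB_0}G(B_0\oplus N)\simeq B_{n-1}\oplus N$ under $A_{n-1}$), gives $\operatorname{Der}_{A_0}(B_0,N)\simeq\operatorname{Der}_{A_{n-1}}(B_{n-1},N)$. By corepresentability this is already the lemma, so Step 3 is a formality. Your route avoids the transitivity sequence and the naturality check on the transitivity map, and it is uniform in $k$, including the bimodule case $k=1$. It is essentially the mechanism behind the result the paper cites, applied one level more directly. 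Your explicit identification of $\mathbf R_0$-modules in $\Sp(\operatorname{Syn}_R)$ with the derived category of graded $\pi_*R$-modules is a point the paper leaves implicit.

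You should, however, delete Step 1 and the ``Alternatively'' clause in Step 2, because they are valid only for $k=\infty$. For finite $k$, extension of scalars $\mathbf R_0\otimes_{\mathbf R_{n-1}}(-)$ on $\E_k$-algebras is not cobase change along $\mathbf R_{n-1}\to\mathbf R_0$ in $\Alg_{\E_k}$. Pushouts of $\E_k$-algebras are not relative tensor products; for $k=1$, a pushout over the initial algebra $\mathbf R_{n-1}$ is a free product. Concretely, take $A_{n-1}=\mathbf R_{n-1}$ and $B_{n-1}$ free on a shift of $\mathbf R_{n-1}$, which is a potential stage. Then $\mathbf R_0*B_{n-1}$ contains summands built from $\mathbf R_0\otimes_{\mathbf R_{n-1}}\mathbf R_0$, which is not discrete for $n\geq2$, whereas $B_0$ is discrete. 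So the square with corners $A_{n-1},B_{n-1},A_0,B_0$ is not cocartesian in general, and Lemma~\ref{basechange} does not apply. That the induced left adjoint on algebras preserves pushouts does not make its values pushouts. Likewise, ``$\tau_{\le 0}$, or equivalently the base change'' holds only on potential stages. Since your derivation argument never uses Step 1, the proof stands once these remarks are removed.
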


\begin{proof}
    Let $\mathbf R$ denote the shift algebra in $\operatorname{Syn}_R$, and
    consider the stage-zero extension-of-scalars functor
    \[
        \Phi_n
        :=
        \mathbf R_0\boxtimes_{\mathbf R_{n-1}}(-):
        \LMod_{\mathbf R_{n-1}}(\operatorname{Syn}_R)
        \longrightarrow
        \LMod_{\mathbf R_0}(\operatorname{Syn}_R).
    \]
    By the defining property of potential $(n-1)$-stages, there are
    canonical equivalences
    \[
        \Phi_n(A_{n-1})\simeq A_0,
        \qquad
        \Phi_n(B_{n-1})\simeq B_0.
    \]

    For $X=A_{n-1}$ or $X=B_{n-1}$, let
    \[
        \Phi_{n,X}:
        \Mod^{\mathbb E_k}_{X}
        \longrightarrow
        \Mod^{\mathbb E_k}_{X_0}
    \]
    denote the induced extension-of-scalars functor on operadic module
    categories. The base-change compatibility of cotangent complexes under
    cocontinuous monoidal functors gives canonical equivalences
    \[
        \Phi_{n,X}\!\left(
            \mathbb L^{\mathbb E_k}_{X/\mathbf R_{n-1}}
        \right)
        \simeq
        \mathbb L^{\mathbb E_k}_{X_0/\mathbf R_0}.
    \]
    For the $\mathbb E_\infty$ case, this is
    \cite[Lemma 7.6]{pstragowski2021abstract}; the same proof applies to
    $\mathbb E_k$-cotangent complexes, as noted in
    \cite[Proof of Proposition 4.35]{DG1}. This is also the stage-zero
    base-change argument used in
    \cite[Proof of Theorem 5.4]{pstragowski2021abstract}.

    Apply $\Phi_{n,B}$ to the transitivity cofiber sequence
    \[
        B_{n-1}\boxtimes_{A_{n-1}}
        \mathbb L^{\mathbb E_k}_{A_{n-1}/\mathbf R_{n-1}}
        \longrightarrow
        \mathbb L^{\mathbb E_k}_{B_{n-1}/\mathbf R_{n-1}}
        \longrightarrow
        \mathbb L^{\mathbb E_k}_{B_{n-1}/A_{n-1}}.
    \]
    Since $\Phi_{n,B}$ is a colimit-preserving functor between stable
    operadic module categories, it preserves this cofiber sequence.
    Moreover, extension of scalars is compatible with $f_{n-1}$, so the
    first two terms become
    \[
        B_0\boxtimes_{A_0}
        \mathbb L^{\mathbb E_k}_{A_0/\mathbf R_0}
        \longrightarrow
        \mathbb L^{\mathbb E_k}_{B_0/\mathbf R_0}.
    \]
    By naturality, this is the transitivity map associated with
    \[
        \mathbf R_0\longrightarrow A_0\longrightarrow B_0.
    \]
    Its cofiber is therefore
    $\mathbb L^{\mathbb E_k}_{B_0/A_0}$. Consequently, there is a canonical
    equivalence
    \[
        \Phi_{n,B}\!\left(
            \mathbb L^{\mathbb E_k}_{B_{n-1}/A_{n-1}}
        \right)
        \simeq
        \mathbb L^{\mathbb E_k}_{B_0/A_0}.
    \]

    Finally, since $N$ is already an operadic $B_0$-module,
    extension--restriction adjunction gives
    \[
    \begin{split}
        &\Map_{\Mod^{\mathbb E_k}_{B_{n-1}}}\!\left(
            \mathbb L^{\mathbb E_k}_{B_{n-1}/A_{n-1}},
            N
        \right)
        \\
        &\qquad\simeq
        \Map_{\Mod^{\mathbb E_k}_{B_0}}\!\left(
            \Phi_{n,B}\!\left(
                \mathbb L^{\mathbb E_k}_{B_{n-1}/A_{n-1}}
            \right),
            N
        \right)
        \\
        &\qquad\simeq
        \Map_{\Mod^{\mathbb E_k}_{B_0}}\!\left(
            \mathbb L^{\mathbb E_k}_{B_0/A_0},
            N
        \right),
    \end{split}
    \]
    as required.
\end{proof}

\begin{cor}[Vanishing of relative obstruction spaces]
\label{relativevanishing}
    Suppose that $A_0\to B_0$ is an \'etale map of Dirac rings.  In the
    situation of Lemma \ref{relativebasechange}, the displayed mapping space
    is contractible in either of the following cases:
    \begin{enumerate}
        \item $2\leq k\leq\infty$ and $N$ is any operadic $B_0$-module;
        \item $k=1$ and the $B_0$-$B_0$-bimodule structure on $N$ factors
        through the multiplication
        $B_0\otimes_{A_0}B_0^{\op}\to B_0$.
    \end{enumerate}
\end{cor}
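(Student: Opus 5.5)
By Lemma \ref{relativebasechange}, it suffices to show that
\[
\Map_{\Mod^{\mathbb E_k}_{B_0}}\!\left(\mathbb L^{\mathbb E_k}_{B_0/A_0},N\right)
\]
is contractible. Here $A_0\to B_0$ is the étale map of Dirac rings, regarded as a map of $\mathbb E_k$-algebras in the derived category of graded $\pi_*R$-modules, which is the stable envelope of the heart $\operatorname{Syn}_R^\heartsuit$. I would split into the two cases of the statement. In both, the only real input is a vanishing statement about the algebraic relative cotangent complex.

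In case (1), $2\leq k\leq\infty$, I would invoke Proposition \ref{discretecotang}. We may take the base Dirac ring there to be $\pi_*(R)$. This needs a little care when $R$ is only $\mathbb E_{k+1}$: then $\pi_*R$ is still a Dirac ring as soon as $k+1\geq 2$, so $\mathcal D(\pi_*R)$ is symmetric monoidal and Proposition \ref{discretecotang} applies as stated. It gives $\mathbb L^{\mathbb E_k}_{B_0/A_0}\simeq 0$ in $\Mod^{\mathbb E_k}_{B_0}(\mathcal D(\pi_*R))$. The mapping space out of a zero object is contractible for every operadic $B_0$-module $N$, which finishes this case.

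In case (2), $k=1$, I would identify $\E_1$-operadic $B_0$-modules with $B_0$-$B_0$-bimodules in $\mathcal D(\pi_*R)$. I would then use the standard comparison
\[
\Map_{{}_{B_0}\BMod_{B_0}}\!\left(\mathbb L^{\E_1}_{B_0/A_0},N\right)
\]
between bimodules over $B_0$ relative to $\pi_*R$ and bimodules over $B_0$ relative to $A_0$, i.e.\ left $B_0\otimes_{A_0}B_0^{\op}$-modules. This is the $k=1$ analogue of the change-of-base argument in the proof of Proposition \ref{discretecotang}. It uses that $A_0$ is a commutative algebra in $\mathcal D(\pi_*R)$, so that $\E_1$-algebras under $A_0$ agree with $\E_1$-$A_0$-algebras, with matching square-zero extensions and derivations.

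Under this identification, and with the hypothesis that the bimodule structure on $N$ factors through $\mu\colon B_0^e\to B_0$, Lemma \ref{discretebimodvanishing} applies directly and gives contractibility. It also covers the suspensions and grading shifts of $N$ appearing in the obstruction groups, such as $\Sigma^{n+1}D_0[-n]$ and $\Sigma^{n+2}B_0[-n]$.

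I expect the main obstacle to be the $k=1$ change of base. The paper's Lemma \ref{discretebimodvanishing} is phrased over $A_0$, but Lemma \ref{relativebasechange} produces bimodules over the ground category $\mathcal D(\pi_*R)$. One has to check that the relative cotangent complex $\mathbb L^{\E_1}_{B_0/A_0}$, which is the fiber of $B_0\otimes_{A_0}B_0\to B_0$ by \cite[7.3.5.1]{HA} in either setting, is the same object in both settings. One also has to check that the mapping spaces agree, using that the forgetful functor from $B_0\otimes_{A_0}B_0^{\op}$-modules to $B_0\otimes_{\pi_*R}B_0^{\op}$-modules is fully faithful on modules pulled back along the surjection $B_0\otimes_{\pi_*R}B_0^{\op}\to B_0\otimes_{A_0}B_0^{\op}$. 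I would check this first, via the explicit fiber description of $\mathbb L^{\E_1}$.
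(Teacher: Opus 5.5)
Your reduction and case split are the paper's own: Lemma~\ref{relativebasechange}, then Proposition~\ref{discretecotang} for $k\geq 2$ and Lemma~\ref{discretebimodvanishing} for $k=1$. Case (1) is fine.

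In case (2) you correctly spot something the paper's one-line proof passes over. Lemma~\ref{discretebimodvanishing} computes maps of modules over $S':=B_0\otimes_{A_0}B_0^{\op}$, while Lemma~\ref{relativebasechange} produces maps of modules over $S:=B_0\otimes_{\pi_*R}B_0^{\op}$. Your identification of the $\pi_*R$-relative $\mathbb L^{\E_1}_{B_0/A_0}$ with $\phi^*I$, where $I=\operatorname{fib}(\mu)$ and $\phi\colon S\to S'$, is also right.

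However, both tools you propose for the comparison are false.
\begin{enumerate}
\item $\E_1$-algebras under a commutative $A_0$ are not $\E_1$-$A_0$-algebras, because the latter force $A_0$ to act centrally. For instance, $\pi_*R[x]$ with $|x|=0$ is the free $\E_1$-algebra on one class, so an $\E_1$-map out of it picks out an arbitrary class, possibly non-central.
\item Restriction along $\phi$ is fully faithful on derived module categories only if $S'\otimes^{\mathbb L}_S S'\simeq S'$. Since $S'\simeq S\otimes_{A_0\otimes_{\pi_*R}A_0}A_0$, one computes $S'\otimes^{\mathbb L}_S S'\simeq S'\otimes_{A_0}\operatorname{HH}(A_0/\pi_*R)$. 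This is not $S'$ already for $B_0=A_0=\pi_*R[x]$.
\end{enumerate}
So the check you intend to do first would fail. The two mapping spaces do agree in the situation at hand, but only because both vanish, and that needs its own argument.

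The missing step is to adjoin directly along the composite $S\to S'\xrightarrow{\mu}B_0$. Write $N=\phi^*\mu^*\overline N$. Then $\Map_S(\phi^*I,N)\simeq\Map_{B_0}(B_0\otimes^{\mathbb L}_S\phi^*I,\overline N)$, and
\[
B_0\otimes^{\mathbb L}_S\phi^*I\simeq\operatorname{fib}\bigl(B_0\otimes_{A_0}\operatorname{HH}(A_0/\pi_*R)\longrightarrow\operatorname{HH}(B_0/\pi_*R)\bigr).
\]
This fiber vanishes by \'etale base change for Hochschild homology. Concretely:
\begin{enumerate}
\item The map $\mu$ is a flat, finitely presented surjection with $I=I^2$, so $I=eS'$ for a degree-zero idempotent $e$ with $\mu(e)=0$.
\item Since $\pi_0S\to S'$ is surjective, $e$ has a lift $\tilde e\in\pi_0S$.
\item Through $I$, $\tilde e$ acts on $\operatorname{Tor}^S_*(B_0,\phi^*I)$ as the identity; through $B_0$, it acts as $\mu(e)=0$. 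Hence this Tor is zero.
\end{enumerate}
Note that this uses more than $I/I^2\simeq\Omega^1_{B_0/A_0}=0$. The flatness argument in Lemma~\ref{discretebimodvanishing} only controls Tor over $S'$, not over $S$.
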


\begin{proof}
    By Lemma \ref{relativebasechange}, it suffices to work at stage zero.  For
    $k\geq2$, Proposition \ref{discretecotang} gives
    $\mathbb L^{\mathbb E_k}_{B_0/A_0}\simeq0$.  For $k=1$, the assertion is
    Lemma \ref{discretebimodvanishing}.
\end{proof}

For every $0\leq n\leq\infty$, define
\[
    \mathcal E_n(A)
    :=
    \Alg_{\mathbb E_k}(\mathcal M_n)_{A_n/}
    \times_{\Alg_{\mathbb E_k}(\mathcal M_0)_{A_0/}}
    \CAlg_{A_0}^{\heartsuit,\text{\'et}}.
\]
Thus $\mathcal E_n(A)$ is the full subcategory of the under-category whose
stage-zero map is an \'etale morphism of Dirac rings.

\begin{proposition}[Stagewise relative rigidity]
\label{stagewiserigidity}
    For every $n\geq1$, the transition functor
    \[
        \mathcal E_n(A)\longrightarrow\mathcal E_{n-1}(A)
    \]
    is an equivalence of $\infty$-categories.
\end{proposition}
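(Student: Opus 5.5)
To prove that $\mathcal E_n(A)\to\mathcal E_{n-1}(A)$ is an equivalence, I will show that it is essentially surjective and fully faithful, using the relative obstruction theory of Lemmas \ref{relobjob} and \ref{relmorob} together with the vanishing result Corollary \ref{relativevanishing}. Throughout, $A_n$ is the fixed image of $A$, so objects of $\mathcal E_n(A)$ are maps $f_n:A_n\to B_n$ of potential $n$-stages with $A_0\to B_0$ \'etale. Both the transition functor and the condition defining $\mathcal E_n(A)$ only involve stage zero, which the transition functor does not change.

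\emph{Essential surjectivity.} Let $f_{n-1}:A_{n-1}\to B_{n-1}$ be an object of $\mathcal E_{n-1}(A)$. By Lemma \ref{relobjob}, lifts of $f_{n-1}$ to $A_n\to B_n$ are the nullhomotopies of a class $\theta_{B/A}$ in $\Map_{\Mod^{\E_k}_{B_{n-1}}}(\mathbb L^{\E_k}_{B_{n-1}/A_{n-1}},\Sigma^{n+2}B_0[-n])$. The coefficient $N=\Sigma^{n+2}B_0[-n]$ is a shifted suspension of the discrete $B_0$-module $B_0$. Lemma \ref{relativebasechange} identifies this mapping space with $\Map_{\Mod^{\E_k}_{B_0}}(\mathbb L^{\E_k}_{B_0/A_0},N)$. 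By Corollary \ref{relativevanishing}, that space is contractible: case (1) applies for $k\ge2$. For $k=1$, case (2) applies because the bimodule structure on $B_0$, viewed as a module over $B_0\otimes_{A_0}B_0^{\op}$, is exactly the one through multiplication. So $\theta_{B/A}$ is nullhomotopic, in fact the space of nullhomotopies is contractible, and a lift exists. The lift lies in $\mathcal E_n(A)$ because its stage-zero map is unchanged.

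\emph{Full faithfulness.} Take objects $A_n\to B_n$ and $A_n\to D_n$ of $\mathcal E_n(A)$. The mapping space in $\mathcal E_n(A)$ is the mapping space in the under-category $\Alg_{\E_k}(\mathcal M_n)_{A_n/}$, since $\mathcal E_n(A)$ is a full subcategory. I will show that the restriction map $\Map_{A_n/}(B_n,D_n)\to\Map_{A_{n-1}/}(B_{n-1},D_{n-1})$ has contractible fibers. Fix $g_{n-1}$. By Lemma \ref{relmorob}, the fiber over $g_{n-1}$ is the path space $P_{0,\eta}$ in $\Map(\mathbb L^{\E_k}_{B_{n-1}/A_{n-1}},\Sigma^{n+1}D_0[-n])$, where $D_0$ is regarded as a $B_{n-1}$-module by restriction along $g_{n-1}$. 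Here $D_0$ is a $B_0$-module via $g_0:B_0\to D_0$, which is a map of Dirac rings, so Lemma \ref{relativebasechange} applies. In the case $k=1$, the $B_0$-bimodule $D_0$ is restricted from the $D_0$-bimodule $D_0$. Its bimodule action factors through $B_0\otimes_{A_0}B_0^{\op}\to D_0\otimes_{A_0}D_0^{\op}\to D_0$, and therefore through the multiplication $\mu$ of $B_0$, since $B_0\to D_0$ is a ring map and $D_0$ is commutative in the graded sense. Corollary \ref{relativevanishing} then makes the ambient mapping space contractible, and a path space in a contractible space is contractible. Hence the restriction map is an equivalence on all mapping spaces.

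The main obstacle I expect is the coefficient bookkeeping in the $k=1$ case. One has to check that the coefficients $B_0$ and $D_0$, with the bimodule structures produced by the obstruction theory (via $\Theta$ and the square-zero extension $\Sigma^{n+1}R_0[-n]$), really are the ones factoring through $\mu$. Only then does Lemma \ref{discretebimodvanishing} apply. One also has to confirm that the grading shift and suspension do not spoil this, which Lemma \ref{discretebimodvanishing} remarks. Once this is in place, the argument is a direct combination of the two relative obstruction lemmas with the vanishing corollary.
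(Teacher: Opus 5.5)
Your proposal is correct and follows the paper's proof essentially step for step. Essential surjectivity comes from the relative object obstruction (Lemma~\ref{relobjob}) and full faithfulness from the relative morphism obstruction (Lemma~\ref{relmorob}), with both obstruction spaces made contractible by Corollary~\ref{relativevanishing}. Your $k=1$ bookkeeping is the same point the paper makes, just spelled out in more detail: the coefficient bimodule $B_0$, and $D_0$ via the Dirac ring map $B_0\to D_0$, act through the multiplication $B_0\otimes_{A_0}B_0^{\op}\to B_0$.
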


\begin{proof}
    Let $A_{n-1}\to B_{n-1}$ be an object of $\mathcal E_{n-1}(A)$.  By
    Lemma \ref{relobjob}, its relative object obstruction is valued in
    \[
        \Map_{\Mod^{\mathbb E_k}_{B_{n-1}}}\!\left(
            \mathbb L^{\mathbb E_k}_{B_{n-1}/A_{n-1}},
            \Sigma^{n+2}B_0[-n]
        \right).
    \]
    This space is contractible by Corollary \ref{relativevanishing}.  For
    $k=1$, the coefficient bimodule is $B_0$ itself and therefore factors
    through multiplication.  Hence a lift exists and its space of choices is
    contractible.  This proves essential surjectivity.

    To prove full faithfulness, let $B_n,D_n\in\mathcal E_n(A)$ and let
    $g_{n-1}:B_{n-1}\to D_{n-1}$ be a map under $A_{n-1}$.  Lemma
    \ref{relmorob} identifies the space of lifts of $g_{n-1}$ with a path
    space in
    \[
        \Map_{\Mod^{\mathbb E_k}_{B_{n-1}}}\!\left(
            \mathbb L^{\mathbb E_k}_{B_{n-1}/A_{n-1}},
            \Sigma^{n+1}D_0[-n]
        \right).
    \]
    This mapping space is again contractible.  If $k=1$, the map of Dirac
    rings $B_0\to D_0$ makes the two $B_0$-actions on $D_0$ agree, so the
    coefficient bimodule factors through multiplication.  Thus every
    lower-stage map has a contractibly unique lift, which proves full
    faithfulness.
\end{proof}

\begin{theorem}[Under-category rigidity]
\label{underrigidity}
    Let $R$ be an $\mathbb E_{k+1}$-ring spectrum and let $A$ be a
    $\pi_*$-Dirac $\mathbb E_k$-$R$-algebra.  Taking homotopy groups induces
    an equivalence
    \[
        \Alg_{\mathbb E_k}(\Mod_R)^{\pi_*\text{-\'et}}_{A/}
        \xrightarrow{\simeq}
        \CAlg_{\pi_*A}^{\heartsuit,\text{\'et}}.
    \]
\end{theorem}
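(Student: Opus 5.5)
The plan is to pass to the limit of the stagewise equivalences in Proposition~\ref{stagewiserigidity}, and then identify the two ends of the tower $\{\mathcal E_n(A)\}$ with the two sides of the statement. The argument has three steps.

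First, identify the bottom of the tower. By definition $\mathcal E_0(A)$ is the full subcategory of $\Alg_{\mathbb E_k}(\mathcal M_0)_{A_0/}$ on maps that are \'etale on stage zero. A potential $0$-stage is a discrete object of $\operatorname{Syn}_R^\heartsuit\simeq\LMod_{\pi_*R}(\operatorname{grAb})$. So $\Alg_{\mathbb E_k}(\mathcal M_0)$ consists of $\mathbb E_k$-algebras in the heart, which are associative algebras in graded $\pi_*R$-modules, with $A_0=\pi_*A$. An \'etale map of Dirac rings has commutative target. Hence $\mathcal E_0(A)\simeq\CAlg^{\heartsuit,\text{\'et}}_{\pi_*A}$, the commutative (Dirac) $\pi_*A$-algebras that are \'etale. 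Since the heart is a $1$-category, this is an ordinary category.

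Second, identify the top of the tower. Completeness of $\operatorname{Syn}_R$ gives $\mathcal M_\infty\simeq\varprojlim\mathcal M_n$, and therefore $\Alg_{\mathbb E_k}(\mathcal M_\infty)\simeq\varprojlim_n\Alg_{\mathbb E_k}(\mathcal M_n)$, because algebras commute with limits of monoidal categories. Passing to under-categories and to the full subcategory cut out by the stage-zero condition, we get $\mathcal E_\infty(A)\simeq\varprojlim_n\mathcal E_n(A)$. The periodicity equivalence $\nu:\Mod_R\simeq\operatorname{Syn}_R^{per}=\mathcal M_\infty$ is $\mathbb E_k$-monoidal, so it identifies $\Alg_{\mathbb E_k}(\Mod_R)_{A/}$ with $\Alg_{\mathbb E_k}(\mathcal M_\infty)_{\nu A/}$. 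Under this identification the stage-zero map of $A\to B$ is $\pi_0\nu A\to\pi_0\nu B$, which is $\pi_*A\to\pi_*B$. So the $\pi_*$-\'etale condition matches the defining condition of $\mathcal E_\infty(A)$, and the functor $\mathcal E_\infty(A)\to\mathcal E_0(A)$ is $\pi_*$.

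Third, conclude. By Proposition~\ref{stagewiserigidity} every transition $\mathcal E_n(A)\to\mathcal E_{n-1}(A)$ is an equivalence, so the limit projection $\varprojlim_n\mathcal E_n(A)\to\mathcal E_0(A)$ is an equivalence. Composing the three identifications gives the theorem.

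I expect the main obstacle to be the second step: that the full subcategory $\mathcal E_\infty(A)$ genuinely agrees with the limit of the $\mathcal E_n(A)$. Two things must be checked. One is that the under-category construction commutes with the inverse limit, for which I would write $\mathcal C_{a/}$ as a fiber product with $\Fun(\Delta^1,\mathcal C)$, which commutes with limits. The other is that the stage-zero \'etale condition is a condition on objects pulled back from stage zero, so it is preserved levelwise.

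For $k=1$ there is an additional point. The heart algebras in $\mathcal E_0(A)$ must be Dirac rings, which holds because their targets are \'etale over the Dirac ring $\pi_*A$. This is also where we use that $A$ is $\pi_*$-Dirac.
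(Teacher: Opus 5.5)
Your proposal is correct and is essentially the paper's proof. You pass to the limit of the stagewise equivalences of Proposition~\ref{stagewiserigidity}, identify the top of the tower with $\pi_*$-\'etale $\mathbb E_k$-$R$-algebras under $A$ via the periodic-object equivalence $\operatorname{Syn}_R^{\mathrm{per}}\simeq\Mod_R$ (so that the projection to stage zero is $\pi_*$), and identify the bottom with $\CAlg^{\heartsuit,\text{\'et}}_{\pi_*A}$; the extra checks you list (under-categories and the stage-zero \'etale condition commuting with the inverse limit) are what the paper leaves implicit here and records separately in Lemma~\ref{lem:relativePVconvergence}.
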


\begin{proof}
    Completeness of $\operatorname{Syn}_R$ and the convergence of the
    potential-stage tower give
    \[
        \mathcal E_\infty(A)\simeq\varprojlim_n\mathcal E_n(A).
    \]
    Proposition \ref{stagewiserigidity} identifies this limit with
    $\mathcal E_0(A)=\CAlg_{A_0}^{\heartsuit,\text{\'et}}$.  Under the
    periodic-object equivalence
    $\operatorname{Syn}_R^{\mathrm{per}}\simeq\Mod_R$, the functor from the
    top to the bottom of the tower is precisely $\pi_*$.  This proves the
    assertion.
\end{proof}

We now impose a fixed target.  The following mapping-space statement is the
relative input required to pass from the under-category to the category of
triangles.

\begin{proposition}[Relative mapping rigidity]
\label{ffness}
    Let $A\to B$ be a $\pi_*$-\'etale morphism of $\pi_*$-Dirac
    $\mathbb E_k$-$R$-algebras, and let $A\to C$ be any morphism of
    $\pi_*$-Dirac $\mathbb E_k$-$R$-algebras.  Then taking homotopy groups
    induces an equivalence
    \[
        \Map_{\Alg_{\mathbb E_k}(\Mod_R)_{A/}}(B,C)
        \xrightarrow{\simeq}
        \Map_{\CAlg^{\heartsuit}_{\pi_*A}}(\pi_*B,\pi_*C).
    \]
    The mapping space on the right is discrete.
\end{proposition}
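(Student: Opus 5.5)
We will prove Proposition \ref{ffness} by running the relative tower argument of Proposition \ref{stagewiserigidity}, but only for mapping spaces, and without assuming that $A\to C$ is \'etale. Pass to synthetic $R$-modules via the periodic-object equivalence $\operatorname{Syn}_R^{\mathrm{per}}\simeq\Mod_R$. Write $A_n,B_n,C_n$ for the potential $n$-stages obtained from $\nu A,\nu B,\nu C$. Since $\operatorname{Syn}_R$ is complete, mapping spaces in $\Alg_{\mathbb E_k}(\mathcal M_\infty)_{A/}$ are the limits of those in $\Alg_{\mathbb E_k}(\mathcal M_n)_{A_n/}$. This gives
\[
    \Map_{\Alg_{\mathbb E_k}(\Mod_R)_{A/}}(B,C)\simeq\varprojlim_n \Map_{\Alg_{\mathbb E_k}(\mathcal M_n)_{A_n/}}(B_n,C_n).
\]
At stage zero, $\mathcal M_0$ consists of discrete objects, so $\Alg_{\mathbb E_k}(\mathcal M_0)$ is the $1$-category of Dirac rings in graded $\pi_*R$-modules. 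Here we use that $A,B,C$ are $\pi_*$-Dirac, so for $k=1$ the $\mathbb E_1$-algebras in the heart are these commutative objects. Hence the stage-zero mapping space is the discrete set $\Hom_{\CAlg^\heartsuit_{\pi_*A}}(\pi_*B,\pi_*C)$, and the composite map from the limit to stage zero is exactly $\pi_*$. Discreteness of the right-hand side is then automatic.

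It therefore suffices to show that each transition map
\[
    \Map_{A_n/}(B_n,C_n)\to\Map_{A_{n-1}/}(B_{n-1},C_{n-1})
\]
is an equivalence for $n\ge1$. To see this, fix $g_{n-1}:B_{n-1}\to C_{n-1}$ under $A_{n-1}$ and apply Lemma \ref{relmorob} with $D=C$. The fiber of the transition map over $g_{n-1}$ is a path space in
\[
    \Map_{\Mod^{\mathbb E_k}_{B_{n-1}}}\!\left(\mathbb L^{\mathbb E_k}_{B_{n-1}/A_{n-1}},\Sigma^{n+1}C_0[-n]\right).
\]
Here $C_0=\pi_*C$ is regarded as an operadic $B_0$-module via the stage-zero map $g_0:B_0\to C_0$; it is discrete up to suspension and shift, hence an operadic $B_0$-module restricted to $B_{n-1}$. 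Lemma \ref{relativebasechange} reduces this space to $\Map_{\Mod^{\mathbb E_k}_{B_0}}(\mathbb L^{\mathbb E_k}_{B_0/A_0},\Sigma^{n+1}C_0[-n])$, which Corollary \ref{relativevanishing} shows is contractible because $A_0\to B_0$ is \'etale. A path space in a contractible space is contractible, and it is nonempty because the obstruction class lies in a contractible space. So every fiber is contractible, and the transition map is an equivalence.

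The main obstacle is the case $k=1$, where Corollary \ref{relativevanishing} requires the $B_0$-bimodule structure on $C_0$ to factor through the multiplication $B_0\otimes_{A_0}B_0^{\op}\to B_0$. We handle it as in the full-faithfulness step of Proposition \ref{stagewiserigidity}. The bimodule structure is induced by the ring map $g_0:B_0\to C_0$ acting by left and right multiplication. Because $C_0$ is a Dirac ring, hence graded-commutative, these two actions agree up to the Koszul sign, which is precisely the symmetry used to form $B_0^{\op}$. Both actions restrict to the same $A_0$-action through $A_0\to C_0$. The bimodule therefore factors through $\mu$, as $\mu_*$ of $C_0$ regarded as a $B_0$-module via $g_0$.

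A secondary point is that the relative obstruction of Lemma \ref{relmorob} was stated for $A_n\to D_n$ inside $\mathcal E_n(A)$, whereas here $A\to C$ is arbitrary. The proof of Lemma \ref{relmorob} never used \'etaleness of the target, so it applies verbatim.
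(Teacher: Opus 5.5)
Your proposal is correct and follows the paper's own route. It uses the tower of relative mapping spaces, Lemma \ref{relmorob} with coefficient $\Sigma^{n+1}C_0[-n]$, reduction to stage zero and vanishing via Corollary \ref{relativevanishing} (for $k=1$ the bimodule factors through multiplication because $B_0\to C_0$ is a map of Dirac rings), discreteness at stage zero, and passage to the inverse limit. Your closing "secondary point" is unnecessary: Lemma \ref{relmorob} is already stated for arbitrary potential stages under $A_n$, with no \'etaleness hypothesis on the target, so no extension of it is needed.
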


\begin{proof}
    Write $A_n,B_n,C_n$ for the associated potential stages.  Given a map
    $g_{n-1}:B_{n-1}\to C_{n-1}$ under $A_{n-1}$, Lemma \ref{relmorob}
    identifies its lift space with a path space in
    \[
        \Map_{\Mod^{\mathbb E_k}_{B_{n-1}}}\!\left(
            \mathbb L^{\mathbb E_k}_{B_{n-1}/A_{n-1}},
            \Sigma^{n+1}C_0[-n]
        \right).
    \]
    Corollary \ref{relativevanishing} makes this space contractible.  In the
    associative case, the coefficient bimodule factors through multiplication
    because $B_0\to C_0$ is a map of Dirac rings.  Thus every transition in
    the tower of mapping spaces is an equivalence.  At stage zero the mapping
    space is the discrete set of maps of Dirac $A_0$-algebras.  Passing to the
    inverse limit proves the claim.
\end{proof}

\begin{proof}[Proof of Theorem \ref{k+1}]
    Consider the commutative square
    \[
    \begin{tikzcd}
        \Alg_{\mathbb E_k}(\Mod_R)^{\pi_*\text{-\'et}}_{A//C}
        \arrow[r,"\pi_*"] \arrow[d]
        &
        \left(\CAlg^{\heartsuit}_{\pi_*A}\right)^{\text{\'et}}_{/\pi_*C}
        \arrow[d]\\
        \Alg_{\mathbb E_k}(\Mod_R)^{\pi_*\text{-\'et}}_{A/}
        \arrow[r,"\pi_*"']
        &
        \CAlg^{\heartsuit,\text{\'et}}_{\pi_*A}.
    \end{tikzcd}
    \]
    The vertical maps are right fibrations, classified respectively by the
    functors
    \[
        B\longmapsto
        \Map_{\Alg_{\mathbb E_k}(\Mod_R)_{A/}}(B,C)
    \]
    and
    \[
        B_0\longmapsto
        \Map_{\CAlg^{\heartsuit}_{\pi_*A}}(B_0,\pi_*C).
    \]
    The bottom horizontal functor is an equivalence by Theorem
    \ref{underrigidity}, and Proposition \ref{ffness} identifies the
    corresponding fibers.  The fiberwise criterion for right fibrations
    \cite[Proposition 2.4.4.4]{HTT} therefore shows that the top horizontal
    functor is an equivalence.  This is the desired relative rigidity theorem.
\end{proof}

\begin{cor}\label{cotangentvanishing}
    Let $f : A \rightarrow B$ be a $\pi_*$-\'etale extension of $\mathbb{E}_k$-$R$-algebras for some $\mathbb{E}_{k+1}$-ring spectrum $R$, $2 \leq k \leq \infty$. Then the relative cotangent complex $\mathbb{L}_{B/A}^{\mathbb{E}_k} \in \Mod^{\E_k}_B(\LMod_R)$ vanishes.
\end{cor}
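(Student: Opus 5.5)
We want $\mathbb{L}^{\E_k}_{B/A}\simeq 0$. It suffices to show that $\Map_{\Mod^{\E_k}_B}(\mathbb{L}^{\E_k}_{B/A},M)$ is contractible for every $M\in\Mod^{\E_k}_B(\LMod_R)$. We will deduce this from Proposition~\ref{ffness}, applied to trivial square-zero extensions. Fix $M$ and consider the trivial square-zero extension $B\oplus M\to B$. By the universal property of the relative cotangent complex (\cite[7.3, 7.4]{HA}), the space $\Map_{\Mod^{\E_k}_B}(\mathbb{L}^{\E_k}_{B/A},M)$ is the fiber of
\[
\Map_{\Alg_{\E_k}(\Mod_R)_{A/}}(B,B\oplus M)\longrightarrow \Map_{\Alg_{\E_k}(\Mod_R)_{A/}}(B,B)
\]
over $\mathrm{id}_B$. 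Here $A$ maps to $B\oplus M$ through $B$ via the zero derivation.

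The first step is to check the hypotheses of Proposition~\ref{ffness}. The target $B\oplus M$ must be $\pi_*$-Dirac, which is automatic since $k\geq 2$, and $A\to B$ is $\pi_*$-\'etale by assumption. Proposition~\ref{ffness} then identifies both mapping spaces above with discrete sets of Dirac $\pi_*A$-algebra maps, namely $\Hom(\pi_*B,\pi_*B\oplus\pi_*M)$ and $\Hom(\pi_*B,\pi_*B)$. By naturality in the target, the comparison map corresponds to composition with the projection $\pi_*B\oplus\pi_*M\to\pi_*B$. We have $\pi_*(B\oplus M)\simeq \pi_*B\oplus\pi_*M$ as a Dirac ring in which $\pi_*M$ is a square-zero ideal.

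Hence the fiber over $\mathrm{id}$ is the set of lifts of $\mathrm{id}_{\pi_*B}$ along the square-zero quotient $\pi_*B\oplus\pi_*M\to\pi_*B$, as Dirac $\pi_*A$-algebra maps. Since $\pi_*A\to\pi_*B$ is \'etale and the ideal $\pi_*M$ squares to zero, Definition~\ref{defet} shows this set has exactly one element, the canonical section. The fiber is therefore a point, so the mapping space is contractible for every $M$, and Yoneda gives $\mathbb{L}^{\E_k}_{B/A}\simeq0$.

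The main obstacle is the identification of the derivation space with that fiber in the relative setting over $R$ with $\E_k$-operadic modules, and checking that the equivalence of Proposition~\ref{ffness} is natural in the target $C$. Naturality is needed so that the induced map of discrete sets really is postcomposition with the projection. We expect naturality to follow from the functoriality of $\pi_*$ used in Proposition~\ref{ffness}. If the identification of derivations is delicate, we would instead use the alternative route: apply Lemma~\ref{relativebasechange} at each stage and the convergence of the synthetic tower to reduce to Proposition~\ref{discretecotang}. The fiber-of-mapping-spaces argument is more direct, so we would try it first.
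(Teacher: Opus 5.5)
Your proposal is correct and is essentially the paper's argument. The paper also forms $B\oplus M$ and identifies the derivation space with the space of lifts over $\mathrm{id}_B$. It then uses rigidity to replace this with the discrete lifting set for $\pi_*A\to\pi_*B$ against the square-zero quotient $\pi_*B\oplus\pi_*M\to\pi_*B$, and concludes by formal \'etaleness. The only difference is that the paper cites Theorem~\ref{k+1} with target $C=B\oplus M$, whereas you invoke Proposition~\ref{ffness}, on which that theorem's proof rests. The naturality you flag is automatic, since the comparison map is just the functor $\pi_*$.
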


\begin{proof}
    Let $M\in \Mod^{\E_k}_B(\LMod_R)$ and form the square-zero extension $B\oplus M\to B$. A derivation from $B$ to $M$ over $A$ is equivalently a lift of the map $A\to B\oplus M$ through $B\oplus M\to B$ whose composite with $B\oplus M\to B$ is the identity of $B$. Apply Theorem \ref{k+1} to the over-category with target $C=B\oplus M$. Since $A\to B$ is $\pi_*$-\'etale, the space of such lifts is identified with the corresponding discrete lifting space for the \'etale Dirac morphism $\pi_*A\to\pi_*B$ against the square-zero extension
    \[
        \pi_*B\oplus\pi_*M\longrightarrow \pi_*B.
    \]
    Formal \'etaleness of $\pi_*A\to\pi_*B$ makes this discrete lifting space a point. Therefore
    \[
        \Map_{\Mod^{\E_k}_B(\LMod_R)}(\mathbb L^{\E_k}_{B/A},M)
        \simeq *
    \]
    for every $M$, and hence $\mathbb L^{\E_k}_{B/A}\simeq 0$.
\end{proof}

\section{The \texorpdfstring{$\mathbb E_4$}{E4}-orientation of the Lubin--Tate theory}\label{sec:BHLS-application}

Burklund, Hahn, Levy, and Schlank construct an $\mathbb E_3$-$MU_{(p)}$-algebra structure on the Lubin--Tate spectrum and promote it to an $\mathbb E_4$-orientation
\[
    MU_{(p)}\longrightarrow E_n
\]
in \cite[Proposition 5.9, Construction 5.10, and Corollary 5.12]{BHLS}.  In this section we construct such an $\mathbb E_3$-$MU_{(p)}$-algebra structure directly by completed relative obstruction theory.  We do not use \cite[Proposition 5.9 or Construction 5.10]{BHLS}.  The final promotion to an $\mathbb E_4$-orientation uses only the self-centrality theorem \cite[Proposition 5.11]{BHLS}.

The coefficient map from a truncated Brown--Peterson spectrum to Lubin--Tate theory is not $\pi_*$-\'etale before completion.  Its relative cotangent complex nevertheless becomes zero after completion at
\[
    I_n=(p,v_1,\ldots,v_{n-1}).
\]
Since every coefficient module occurring in the Goerss--Hopkins obstruction tower is $I_n$-complete, this completed vanishing is exactly what the obstruction theory detects.

\subsection{Completed obstruction spaces}\label{subsec:completedPV}

We first record the completion formalism used below.  If $B_0$ is a Dirac ring and $I\subset B_0$ is a finitely generated homogeneous ideal, derived $I$-completion is defined in the stable category of graded $B_0$-modules by the graded analogue of \cite[Definition 7.3.1.1 and Notation 7.3.1.5]{SAG}.  Thus a homogeneous element $x\in B_0$ of degree $d$ acts by a map $M[-d]\to M$, and the proofs of \cite[Section 7.3]{SAG} apply verbatim.  The same construction applies to every stable $B_0$-linear category, in particular to the category of operadic modules over an $\mathbb E_k$-algebra.

\begin{lemma}[Completion in operadic module categories]\label{lem:completionoperadic}
    Let $B_0$ be a Dirac ring, let $I\subset B_0$ be a finitely generated homogeneous ideal, and let
    \[
        \Mod^{\mathbb E_k}_{B_0}
    \]
    denote the stable category of operadic $B_0$-modules.  The full subcategory of derived $I$-complete objects is stable, and its inclusion admits an exact left adjoint
    \[
        M\longmapsto M_I^{\wedge}.
    \]
    Consequently, for every derived $I$-complete operadic module $N$, the unit $M\to M_I^{\wedge}$ induces a canonical equivalence
    \[
        \Map_{\Mod^{\mathbb E_k}_{B_0}}(M_I^{\wedge},N)
        \simeq
        \Map_{\Mod^{\mathbb E_k}_{B_0}}(M,N).
    \]
\end{lemma}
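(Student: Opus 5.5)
The plan is to reduce to the case of ordinary graded $B_0$-modules, since $I$-completeness only involves the underlying $B_0$-linear structure. The key observation is that $\Mod^{\mathbb E_k}_{B_0}$ is a stable presentable $B_0$-linear category. Concretely, it is equivalent to left modules over the enveloping algebra $U^{(k)}(B_0)$, which receives a central map from $B_0$ because $B_0$ is commutative. Each homogeneous $x\in B_0$ of degree $d$ therefore acts by a natural transformation $M[-d]\to M$ on every operadic module. I will define $M$ to be \emph{derived $I$-complete} if, for each generator $x_i$ of $I=(x_1,\dots,x_r)$, the limit of
\[
\cdots\xrightarrow{x_i} M[2d_i]\xrightarrow{x_i} M[d_i]\xrightarrow{x_i} M
\]
vanishes. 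This is the graded analogue of \cite[7.3.1.1]{SAG}. By the usual argument, the condition depends only on the radical of $I$, not on the chosen generators.

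First, stability of the complete subcategory. The condition is the vanishing of a limit built from exact functors, namely shifts, so it is closed under finite limits, finite colimits, shifts, retracts, and all small limits. The complete objects thus form a stable full subcategory closed under limits. They are exactly the right orthogonal to the \emph{$I$-torsion} objects, i.e. those on which each $x_i$ acts locally nilpotently, or equivalently those killed by inverting any $x_i$. This is because the limit above is $\Map(B_0[x_i^{-1}]\otimes -, M)$ up to shift.

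Second, existence of the left adjoint. I will construct it explicitly rather than through the adjoint functor theorem. Define the local cohomology functor
\[
\Gamma_I M := \operatorname{fib}\Big(M\to \check{C}(x_1,\dots,x_r)\otimes_{B_0}M\Big),
\]
where $\check{C}$ is the augmented-removed Čech complex built from the localizations $B_0[x_{i_1}^{-1}\cdots x_{i_s}^{-1}]$. Tensoring with these localizations makes sense because the $B_0$-action on operadic modules is central. Next set
\[
M_I^\wedge := \underline{\Map}_{B_0}(\Gamma_I B_0, M).
\]
The argument of \cite[7.3.1]{SAG} then carries over. $\Gamma_I B_0$ is a colimit of Koszul-type objects $B_0/(x^m)$, so $M_I^\wedge\simeq \varprojlim_m M\otimes_{B_0}\mathrm{Kos}(x^m)$ is complete. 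The fiber of $M\to M_I^\wedge$ is $\underline{\Map}(B_0[x^{-1}]$-type objects$,M)$, which is right orthogonal to torsion objects; I will check that it is also local, so that maps out of it into complete objects vanish. This gives the universal property. The functor is exact because it is a composite of exact functors.

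Third, the displayed equivalence follows formally from the adjunction: $\Map(M_I^\wedge,N)\simeq\Map(M,N)$ for complete $N$ is exactly the unit's universal property.

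The main obstacle is the claim that the $B_0$-action on $\Mod^{\mathbb E_k}_{B_0}$ is genuinely central and compatible with the grading shifts, with the correct Koszul signs. This is what makes the tensoring with $B_0[x^{-1}]$ and the internal Hom $\underline{\Map}_{B_0}$ well defined. I would first try to establish this by writing $\Mod^{\mathbb E_k}_{B_0}\simeq \LMod_{U(B_0)}$, giving a map $B_0\to Z(U(B_0))$ into the $\mathbb E_2$-center. Failing that, I would define $\Gamma_I$ and the completion directly by the limit formulas above and verify the universal property by hand.
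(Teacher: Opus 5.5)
This is essentially the paper's argument: the paper applies the graded form of \cite[Proposition 7.3.1.4]{SAG} to the stable $B_0$-linear category $\Mod^{\mathbb E_k}_{B_0}$, and your local-cohomology construction $M_I^{\wedge}=\underline{\Map}_{B_0}(\Gamma_I B_0,M)$ simply unpacks that citation. One slip needs fixing. Your own identity $\varprojlim(\cdots\xrightarrow{x_i}M)\simeq\underline{\Map}_{B_0}(B_0[x_i^{-1}],M)$ shows that complete objects are the right orthogonal of the $I$-\emph{local} objects (those generated by objects of the form $B_0[x_i^{-1}]\otimes N$), not of the $I$-torsion ones: for instance $\mathbb Z_p$ is $p$-complete although $\operatorname{Ext}^1(\mathbb Z/p,\mathbb Z_p)\neq 0$. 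Local-versus-complete orthogonality is exactly what your second step needs to conclude that the fiber of $M\to M_I^{\wedge}$ maps trivially into complete objects.
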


\begin{proof}
    Apply the graded form of \cite[Proposition 7.3.1.4]{SAG} to the stable $B_0$-linear category $\Mod^{\mathbb E_k}_{B_0}$.  It gives a semiorthogonal decomposition into $I$-local and $I$-complete objects and therefore a left adjoint to the inclusion of the complete subcategory.  Both categories are stable and the inclusion is exact, so its left adjoint is exact.  The displayed mapping-space equivalence is the defining adjunction.
\end{proof}

For a map $A_0\to B_0$ of Dirac algebras, put
\[
    \widehat{\mathbb L}^{\mathbb E_k}_{B_0/A_0}
    :=
    \left(\mathbb L^{\mathbb E_k}_{B_0/A_0}\right)^{\wedge}_{I}
    \in \Mod^{\mathbb E_k}_{B_0},
\]
where $I$ also denotes the ideal generated by its image in $B_0$.

\begin{lemma}[Completed obstruction spaces]\label{lem:completedPV}
    Let $I\subset A_0$ be a finitely generated homogeneous ideal and let $A_0\to B_0$ be a map of Dirac algebras.  If an operadic $B_0$-module $N$ appearing as a coefficient object in the relative obstruction theory is derived $I$-complete, then there is a canonical equivalence
    \[
        \Map_{\Mod^{\mathbb E_k}_{B_0}}
        \left(
            \mathbb L^{\mathbb E_k}_{B_0/A_0},N
        \right)
        \simeq
        \Map_{\Mod^{\mathbb E_k}_{B_0}}
        \left(
            \widehat{\mathbb L}^{\mathbb E_k}_{B_0/A_0},N
        \right).
    \]
    In particular, the object and morphism obstruction spaces of Lemmas \ref{relobjob} and \ref{relmorob} may be computed from the completed relative cotangent complex whenever their coefficient modules are $I$-complete.
\end{lemma}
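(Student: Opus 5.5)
The plan is to deduce the statement directly from Lemma \ref{lem:completionoperadic}, and then to transport it to the finite-stage obstruction spaces using Lemma \ref{relativebasechange}. The first step is to check that the ideal is set up correctly. The ideal $I\subset A_0$ is finitely generated and homogeneous, so its image generates a finitely generated homogeneous ideal of $B_0$; call it $IB_0$. Derived $I$-completeness of a $B_0$-module means completeness with respect to the action of the images of the generators. Since these images are exactly the generators of $IB_0$, derived $I$-completeness and derived $IB_0$-completeness agree for operadic $B_0$-modules. This is the graded analogue of the independence of derived completion from the choice of generators, and of its compatibility with restriction along ring maps \cite[Section 7.3]{SAG}.

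With this in place, apply Lemma \ref{lem:completionoperadic} to $B_0$, the ideal $IB_0$, and $M=\mathbb L^{\mathbb E_k}_{B_0/A_0}$. Since $N$ is derived $I$-complete, the unit $M\to M^\wedge_I=\widehat{\mathbb L}^{\mathbb E_k}_{B_0/A_0}$ induces the displayed equivalence of mapping spaces. It is canonical because it is the adjunction equivalence.

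For the ``in particular'' clause, combine this with Lemma \ref{relativebasechange}. The obstruction spaces of Lemmas \ref{relobjob} and \ref{relmorob} have the form $\Map_{\Mod^{\mathbb E_k}_{B_{n-1}}}(\mathbb L^{\mathbb E_k}_{B_{n-1}/A_{n-1}},N)$, with $N=\Sigma^{n+2}B_0[-n]$ or $N=\Sigma^{n+1}D_0[-n]$. In each case $N$ is an operadic $B_0$-module restricted to $B_{n-1}$. Lemma \ref{relativebasechange} rewrites each such space as a stage-zero mapping space out of $\mathbb L^{\mathbb E_k}_{B_0/A_0}$, and the equivalence above then replaces that cotangent complex by its completion. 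Suspensions $\Sigma^{j}$ and grading shifts $[-n]$ preserve derived $I$-completeness, because the completeness condition is formulated through limits and shifts that commute with these operations. So the hypothesis only needs to be checked on the underlying module $B_0$ or $D_0$.

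I expect the only delicate point to be making sure that completion happens in the right category. Lemma \ref{lem:completionoperadic} completes inside the operadic module category, whereas $I$-completeness of $N$ is naturally a statement about its underlying graded $\pi_*R$-module or $B_0$-module. I would handle this by observing that the forgetful functor from operadic $B_0$-modules to underlying graded modules is conservative. It also commutes with limits, and it intertwines multiplication by the homogeneous elements of $I$. Hence the $I$-local objects, meaning those on which $\Map(\text{local},\text{complete})\simeq *$, are detected on underlying objects. It follows that an operadic module is derived $I$-complete exactly when its underlying module is.
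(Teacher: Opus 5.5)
Your proposal is correct and follows the paper's argument. The displayed equivalence is just the adjunction of Lemma~\ref{lem:completionoperadic} applied to $M=\mathbb L^{\mathbb E_k}_{B_0/A_0}$, and the ``in particular'' clause is the identification of the finite-stage obstruction spaces with stage-zero mapping spaces from Lemma~\ref{relativebasechange}. Your extra checks are supplementary care that the paper leaves implicit: that $I$- and $IB_0$-completeness agree, and that completeness is detected on underlying modules, which is cleanest via the vanishing of $\varprojlim(\cdots\to N[-2d]\xrightarrow{x}N[-d]\xrightarrow{x}N)$ for homogeneous $x\in I$ of degree $d$ rather than your phrasing in terms of local objects.
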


\begin{proof}
    This is Lemma \ref{lem:completionoperadic}, applied to
    \[
        M=\mathbb L^{\mathbb E_k}_{B_0/A_0}.
    \]
    The relative obstruction spaces have already been identified above with mapping spaces out of this relative cotangent complex, so no compatibility between completion and the transitivity sequence is needed here.
\end{proof}

\begin{lemma}[Convergence of the relative tower]\label{lem:relativePVconvergence}
    Let $R$ be an $\mathbb E_{k+1}$-ring spectrum and work in $\operatorname{Syn}_R$. Let $A$ be an $\mathbb E_k$-$R$-algebra, and let $A_n$ be its compatible system of potential stages. For a fixed map of $0$-stages $A_0\to B_0$, there is a natural equivalence
    \[
    \begin{split}
        &\Alg_{\mathbb E_k}(\mathcal M_\infty)_{A/}
        \times_{\Alg_{\mathbb E_k}(\mathcal M_0)_{A_0/}}
        \{B_0\}
        \\
        &\qquad\simeq
        \varprojlim_n\left(
            \Alg_{\mathbb E_k}(\mathcal M_n)_{A_n/}
            \times_{\Alg_{\mathbb E_k}(\mathcal M_0)_{A_0/}}
            \{B_0\}
        \right).
    \end{split}
    \]
\end{lemma}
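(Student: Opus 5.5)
The plan is to deduce the relative statement from the absolute convergence of the Goerss--Hopkins tower in $\operatorname{Syn}_R$, together with the fact that limits commute with limits. First I record the absolute input. Since $\operatorname{Syn}_R$ is complete, the equivalences $\mathbf R\simeq\varprojlim\mathbf R_n$ and $\LMod_{\mathbf R}\simeq\varprojlim\LMod_{\mathbf R_n}$ restrict to $\mathcal M_\infty\simeq\varprojlim\mathcal M_n$. To see this, note that a compatible system of potential $n$-stages assembles to an $\mathbf R$-module $M$. Its stage-zero base change $\mathbf R_0\boxtimes_{\mathbf R}M$ is the limit of the discrete objects $\mathbf R_0\boxtimes_{\mathbf R_n}M_n$, and these are all identified with $M_0$, so $M$ is periodic. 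The tower $\Alg_{\mathbb E_k}(\LMod_{\mathbf R_n})$ is obtained by applying $\Alg_{\mathbb E_k}(-)$ to a tower of $\mathbb E_k$-monoidal categories and $\mathbb E_k$-monoidal functors. Since $\Alg_{\mathbb E_k}$ commutes with limits of monoidal categories, and the fiber product with $\mathcal M_n$ is taken levelwise, we get
\[
\Alg_{\mathbb E_k}(\mathcal M_\infty)\simeq\varprojlim_n\Alg_{\mathbb E_k}(\mathcal M_n).
\]

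Next I pass to under-categories. The object $A$ corresponds to the compatible system $(A_n)$ under this equivalence. For a limit of categories $\mathcal C\simeq\varprojlim\mathcal C_n$ and an object $x=(x_n)$, the under-category $\mathcal C_{x/}$ is the limit of the $(\mathcal C_n)_{x_n/}$. This holds because $\mathcal C_{x/}\simeq\Fun(\Delta^1,\mathcal C)\times_{\mathcal C}\{x\}$, and both $\Fun(\Delta^1,-)$ and fiber products commute with limits in $\Cat_\infty$. This gives
\[
\Alg_{\mathbb E_k}(\mathcal M_\infty)_{A/}\simeq\varprojlim_n\Alg_{\mathbb E_k}(\mathcal M_n)_{A_n/}.
\]
These equivalences are compatible with the projections to stage $0$, because the stage-$0$ term is one term of the limit diagram and the projections are the limit cone maps.

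Finally I take the fiber over $B_0$. The functor $\varprojlim_n$ from towers of categories commutes with pullbacks. I apply it to the tower of pullback squares
\[
\Alg_{\mathbb E_k}(\mathcal M_n)_{A_n/}\times_{\Alg_{\mathbb E_k}(\mathcal M_0)_{A_0/}}\{B_0\},
\]
in which the bottom terms $\Alg_{\mathbb E_k}(\mathcal M_0)_{A_0/}$ and $\{B_0\}$ form constant towers. The limit of a constant tower over $\mathbb N^{\op}$ is its value, since $\mathbb N^{\op}$ is weakly contractible. This yields the stated natural equivalence, and naturality in $B_0$ follows from functoriality of these identifications.

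The main obstacle is the first step. I need to justify that the completeness statement for module categories, which in \cite{pstragowski2021abstract} is stated in the symmetric monoidal setting, holds $\mathbb E_k$-monoidally for $\operatorname{Syn}_R$ when $R$ is only $\mathbb E_{k+1}$. I also need to check that the potential-stage condition passes to the limit. I would handle the first point as the paper does earlier: the tower only uses the $\mathbb E_k$-monoidal structure, as recorded in \cite[Proof of Theorem 4.33]{DG1}. For the second, I would use that a potential $n$-stage is $n$-truncated, so the Postnikov towers converge by completeness.
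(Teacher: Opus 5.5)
Your proposal is correct and follows the paper's own proof. The paper also starts from the absolute equivalence $\Alg_{\mathbb E_k}(\mathcal M_\infty)\simeq\varprojlim_n\Alg_{\mathbb E_k}(\mathcal M_n)$, citing Remark~5.2 of Pstr\k{a}gowski--VanKoughnett together with completeness of $\operatorname{Syn}_R$, and then notes that taking the slice under $A_\bullet$ and the fiber over $B_0$ commute with limits in $\Cat_\infty$; your description of slices via $\Fun(\Delta^1,-)$ and your constant-tower argument over $\mathbb N^{\op}$ simply make those formal steps explicit.
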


\begin{proof}
    By Construction \ref{SynthR}, the category $\operatorname{Syn}_R$ is complete. Hence \cite[Remark 5.2]{pstragowski2021abstract} identifies the category of potential $\infty$-stages with the limit of the finite-stage categories:
    \[
        \Alg_{\mathbb E_k}(\mathcal M_\infty)
        \simeq
        \varprojlim_n \Alg_{\mathbb E_k}(\mathcal M_n).
    \]
    Passing to the slice under the compatible system $A_\bullet$ and then taking the fiber over the fixed $0$-stage $B_0$ commute with limits in $\Cat_\infty$. This gives the claimed equivalence.
\end{proof}

\begin{cor}[Completed vanishing criterion]\label{cor:completedPVvanishing}
    In the situation of Lemma \ref{lem:completedPV}, assume
    \[
        \widehat{\mathbb L}^{\mathbb E_k}_{B_0/A_0}\simeq 0.
    \]
    Then every relative obstruction and ambiguity space whose coefficient module is derived $I$-complete is contractible.
\end{cor}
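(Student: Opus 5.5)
The argument is a formal combination of Lemma \ref{lem:completedPV} with the description of the obstruction spaces in Lemmas \ref{relobjob} and \ref{relmorob}, as reduced to stage zero by Lemma \ref{relativebasechange}. I would argue in three steps.

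\emph{Step 1: reduce to stage zero.} Fix a relative obstruction or ambiguity space at stage $n\geq1$. By Lemma \ref{relobjob} or Lemma \ref{relmorob}, it is either the mapping space
\[
    \Map_{\Mod^{\mathbb E_k}_{B_{n-1}}}\!\left(\mathbb L^{\mathbb E_k}_{B_{n-1}/A_{n-1}},N\right)
\]
or a path space in it, with coefficient object $N=\Sigma^{n+2}B_0[-n]$ or $N=\Sigma^{n+1}D_0[-n]$. In both cases $N$ is an operadic $B_0$-module, restricted along $B_{n-1}\to B_0$. Lemma \ref{relativebasechange} then identifies this mapping space with
\[
    \Map_{\Mod^{\mathbb E_k}_{B_0}}\!\left(\mathbb L^{\mathbb E_k}_{B_0/A_0},N\right).
\]

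\emph{Step 2: replace by the completed cotangent complex.} The hypothesis is that $N$ is derived $I$-complete. Lemma \ref{lem:completedPV} (equivalently, the adjunction of Lemma \ref{lem:completionoperadic}) therefore rewrites the stage-zero mapping space as
\[
    \Map_{\Mod^{\mathbb E_k}_{B_0}}\!\left(\widehat{\mathbb L}^{\mathbb E_k}_{B_0/A_0},N\right).
\]
Since $\widehat{\mathbb L}^{\mathbb E_k}_{B_0/A_0}\simeq0$ by assumption, this is a mapping space out of the zero object in a stable category. It is therefore contractible.

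\emph{Step 3: conclude for obstruction and ambiguity spaces.} The obstruction spaces are these mapping spaces, so they are contractible. Each obstruction class is then canonically equivalent to the zero point. The ambiguity spaces are path spaces $P_{0,\eta}X$ in a contractible space $X$, and such path spaces are again contractible. Hence lifts exist and are contractibly unique.

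The main obstacle is Step 2, namely checking that completeness is being tested in the right category. The coefficient $N$ must be derived $I$-complete as an object of $\Mod^{\mathbb E_k}_{B_0}$, where $I$ is the ideal generated by its image in $B_0$. Completion is tested on the underlying graded module via the action of $I$, and suspensions and grading shifts preserve completeness. So it suffices that $B_0$, respectively $D_0$, is derived $I$-complete as a graded module. This is exactly what the phrase ``coefficient module is derived $I$-complete'' in the hypothesis records. I would also check that the equivalence of Lemma \ref{relativebasechange} is an adjunction identity applied with the same $N$. Completeness is then only needed at stage zero, and no compatibility between completion and the functor $\Phi_{n,B}$ is required.
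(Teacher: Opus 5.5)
Your proposal is correct and follows the paper's route. The paper's proof is the one-line combination of Lemma \ref{lem:completedPV} with the contractibility of mapping spaces out of the zero object. Your Steps 1--3 make explicit two points the paper leaves implicit: the stage-zero reduction via Lemma \ref{relativebasechange}, on which Lemma \ref{lem:completedPV} tacitly relies, and the fact that path spaces in a contractible space are contractible.
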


\begin{proof}
    Combine Lemma \ref{lem:completedPV} with the fact that every mapping space out of the zero object is contractible.
\end{proof}

\begin{proposition}[Existence from completed obstruction theory]\label{prop:completedPVexistence}
    Let $R$ be an $\mathbb E_{k+1}$-ring spectrum and let $A$ be a fixed $\mathbb E_k$-$R$-algebra, with $A_0=\pi_*A$. Let $A_0\to B_0$ be a map of Dirac algebras and let $I\subset A_0$ be a finitely generated homogeneous ideal. Assume:
    \begin{enumerate}
        \item $B_0$, and hence every shift of $B_0$ occurring as an obstruction coefficient, is derived $I$-complete;
        \item the completed relative cotangent complex vanishes:
        \[
            \widehat{\mathbb L}^{\mathbb E_k}_{B_0/A_0}\simeq 0.
        \]
    \end{enumerate}
    Then the space
    \[
        \Alg_{\mathbb E_k}(\mathcal M_\infty)_{A/}
        \times_{\Alg_{\mathbb E_k}(\mathcal M_0)_{A_0/}}
        \{B_0\}
    \]
    is contractible. Equivalently, the space of $\mathbb E_k$-$R$-algebras $B$ under $A$, equipped with a specified identification
    \[
        \pi_*B\simeq B_0
    \]
    compatible with the prescribed map $A_0\to B_0$, is contractible. In particular, such a realization exists.
\end{proposition}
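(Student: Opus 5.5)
The strategy is to run the relative potential-stage tower of Lemma \ref{lem:relativePVconvergence} with the fiber over the fixed $0$-stage $B_0$. We show that every transition is an equivalence of spaces, so that the limit agrees with the stage-zero term. Put
\[
    F_n := \Alg_{\mathbb E_k}(\mathcal M_n)_{A_n/}\times_{\Alg_{\mathbb E_k}(\mathcal M_0)_{A_0/}}\{B_0\}.
\]
Then Lemma \ref{lem:relativePVconvergence} identifies the space in the statement with $\varprojlim_n F_n$. The term $F_0$ is the fiber of the identity over the point $B_0$, and is therefore contractible.

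The inductive step is to show that each $F_n\to F_{n-1}$ is an equivalence for $n\geq1$. We fix a point of $F_{n-1}$, namely $A_{n-1}\to B_{n-1}$ together with an identification of its $0$-stage with $B_0$. By Lemma \ref{relobjob}, the fiber of $F_n\to F_{n-1}$ over this point is the space of nullhomotopies of the relative class $\theta_{B/A}$, which lives in
\[
    \Map_{\Mod^{\mathbb E_k}_{B_{n-1}}}\!\left(\mathbb L^{\mathbb E_k}_{B_{n-1}/A_{n-1}},\Sigma^{n+2}B_0[-n]\right).
\]
Lemma \ref{relativebasechange} rewrites this space as $\Map_{\Mod^{\mathbb E_k}_{B_0}}(\mathbb L^{\mathbb E_k}_{B_0/A_0},\Sigma^{n+2}B_0[-n])$. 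The coefficient $\Sigma^{n+2}B_0[-n]$ is a suspension and grading shift of $B_0$, so by hypothesis (1) it is derived $I$-complete; here we use that completeness is stable under shifts and suspensions, because the complete objects form a stable subcategory by Lemma \ref{lem:completionoperadic}. Corollary \ref{cor:completedPVvanishing}, together with hypothesis (2), then shows that the mapping space is contractible. Consequently $\theta_{B/A}$ is null and its space of nullhomotopies is contractible, so each fiber of $F_n\to F_{n-1}$ is contractible. We also need that fixing the $0$-stage identification does not add extra data over $F_{n-1}$. This holds because the fiber over $\{B_0\}$ is pulled back from stage $0$ and the transition maps commute with the projection to stage $0$.

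Once every transition is an equivalence, the limit satisfies $\varprojlim_n F_n\simeq F_0\simeq *$. For the final clause we identify $\Alg_{\mathbb E_k}(\mathcal M_\infty)_{A/}$ with $\mathbb E_k$-$R$-algebras under $A$, using $\operatorname{Syn}_R^{\mathrm{per}}\simeq\Mod_R$ as an $\mathbb E_k$-monoidal equivalence. Under this identification the functor to stage $0$ becomes $\pi_*$, so the fiber over $B_0$ is exactly the space of $B$ under $A$ equipped with $\pi_*B\simeq B_0$ compatible with $A_0\to B_0$.

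The main obstacle I expect is the $k=1$ case. There the coefficient $B_0$ carries a bimodule structure, and the completed cotangent complex must be interpreted in bimodules. Hypothesis (2) is stated directly for $\widehat{\mathbb L}$, so no factorization through multiplication is needed, and Corollary \ref{cor:completedPVvanishing} applies verbatim. I would also check that completing along the image of $I$ in $B_0$ agrees with completion relative to $I\subset A_0$ on $B_0$-modules. The generators act through their images, so the two completions coincide.
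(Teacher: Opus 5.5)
Your proposal is correct and is essentially the paper's argument. Like the paper, you run the relative potential-stage tower over the fixed $0$-stage, use Lemma~\ref{relobjob} together with the stage-zero reduction of Lemma~\ref{relativebasechange} and Corollary~\ref{cor:completedPVvanishing} to get each next stage with contractible choices, and pass to the limit via Lemma~\ref{lem:relativePVconvergence}. The only difference is that the paper also cites Lemma~\ref{relmorob} for morphism lifts. You instead work with fibers of the maps of spaces $F_n\to F_{n-1}$, which is legitimate: the transition functors are conservative, so the $F_n$ are spaces and these fibers already account for the automorphism data.
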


\begin{proof}
    The map $A_0\to B_0$ is the prescribed $0$-stage. Suppose inductively that a potential $(n-1)$-stage has been constructed whose $0$-stage is equipped with the prescribed identification with $B_0$. Lemmas \ref{relobjob} and \ref{relmorob}, together with Corollary \ref{cor:completedPVvanishing}, show that the obstruction to constructing the next stage is canonically null and that the spaces of object and morphism lifts are contractible. Hence, at every finite stage, the fiber over $B_0$ is nonempty and contractible. Lemma \ref{lem:relativePVconvergence} identifies the desired realization space with the inverse limit of these fibers, and a limit of contractible spaces is contractible.
\end{proof}

\subsection{The completed cotangent calculation}\label{subsec:completed-cotangent}

The missing algebraic input is that completion itself contributes no cotangent directions after completing the coefficient module.  This can be proved entirely in the ordinary ambient operadic module category.

\begin{lemma}[Formal \(I\)-adic invariance after completion]\label{lem:completionformal}
    Let $A$ be a Noetherian Dirac ring, let $I\subset A$ be a finitely generated homogeneous ideal, and let
    \[
        A\longrightarrow \widehat A
    \]
    be its derived $I$-completion.  Assume that $\widehat A$ is discrete as a graded derived ring.  Then, for every $1\leq k\leq\infty$,
    \[
        \left(
            \mathbb L^{\mathbb E_k}_{\widehat A/A}
        \right)^{\wedge}_{I}
        \simeq 0
    \]
    in $\Mod^{\mathbb E_k}_{\widehat A}$.
\end{lemma}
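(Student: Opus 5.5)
Since $\widehat A$ is discrete and flat over $A$ in the derived $I$-complete sense, the key input is that the derived $I$-completion of $\widehat A\otimes_A \widehat A$ is $\widehat A$. The plan is to deduce the vanishing of the completed cotangent complex from this idempotence. I would argue through the universal property of $\mathbb L^{\mathbb E_k}_{\widehat A/A}$ rather than by explicit computation.

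\emph{Step 1: reduce to derivations.} By Lemma~\ref{lem:completionoperadic}, $\bigl(\mathbb L^{\mathbb E_k}_{\widehat A/A}\bigr)^{\wedge}_I\simeq 0$ holds if and only if
\[
\Map_{\Mod^{\mathbb E_k}_{\widehat A}}\bigl(\mathbb L^{\mathbb E_k}_{\widehat A/A},N\bigr)
\]
is contractible for every derived $I$-complete operadic $\widehat A$-module $N$. This mapping space is the space of $A$-linear derivations, that is, the space of sections of $\widehat A\oplus N\to\widehat A$ under $A$ in $\mathbb E_k$-algebras. So it suffices to show that this space of sections is contractible.

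\emph{Step 2: identify maps out of $\widehat A$ into complete targets.} Every underlying module appearing is derived $I$-complete: $N$ by assumption, hence so is $\widehat A\oplus N$. For $k\geq 2$ the underlying object of an operadic module is just an $\widehat A$-module, so completeness is a condition on underlying objects. For $k=1$, completeness of a bimodule is checked on the underlying graded module, since the $I$-action can be taken from either side and these agree up to the homotopy provided by the module structure.

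I then claim that for any $\mathbb E_k$-$A$-algebra $C$ whose underlying module is derived $I$-complete, restriction along $A\to\widehat A$ gives an equivalence
\[
\Map_{\Alg_{\mathbb E_k,A}}(\widehat A,C)\xrightarrow{\ \simeq\ }\Map_{\Alg_{\mathbb E_k,A}}(A,C)\simeq *.
\]
The argument is that derived $I$-completion is symmetric monoidal on the $I$-complete category with unit $\widehat A$. Hence $\widehat A$ is the unit of the completed $\mathbb E_\infty$-monoidal category $\Mod_A^{\wedge}$, and $\mathbb E_k$-algebras in $\Mod_A$ with complete underlying object are identified with $\mathbb E_k$-algebras in $\Mod_A^{\wedge}$. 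Under this identification $\widehat A$ is the initial object, which gives the equivalence.

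\emph{Step 3: conclude.} Applying Step 2 to $C=\widehat A\oplus N$ and to $C=\widehat A$ shows that both mapping spaces are contractible. The fiber of the map between them, which is exactly the space of sections from Step 1, is therefore contractible, and Lemma~\ref{lem:completionoperadic} gives the vanishing.

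The Noetherian and discreteness hypotheses enter in two places. They ensure that completion is computed classically, so that $\widehat A$ is genuinely an ordinary Dirac ring and the objects in Steps 2 and 3 are the expected ones. They also ensure that the completed tensor product $(\widehat A\otimes_A\widehat A)^\wedge_I$ is equivalent to $\widehat A$, which is what the monoidality in Step 2 requires.

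\emph{Main obstacle.} The hardest step is Step 2: justifying that the localization $\Mod_A\to\Mod_A^{\wedge}$ is compatible with the $\mathbb E_k$-monoidal structure in the graded Dirac setting. Concretely, one must show that $M\to M'$ being a $(-)^\wedge_I$-equivalence implies that $M\otimes P\to M'\otimes P$ is one as well. I would try to prove this by characterising equivalences via the Koszul objects $A/\!/x^m$ for the finitely many homogeneous generators $x$ of $I$, tensoring with which commutes with everything in sight. For $k=1$, I would additionally check that the bimodule $\widehat A\oplus N$ arising from a derivation still has complete underlying object.
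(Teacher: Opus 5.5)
Your proposal is correct and is essentially the paper's own argument. In both, $\widehat A$ is initial among derived $I$-complete $\mathbb E_k$-$A$-algebras because completion is compatible with the tensor product. Hence the space of sections of $\widehat A\oplus N\to\widehat A$, which is $\Map(\mathbb L^{\mathbb E_k}_{\widehat A/A},N)$, is contractible for every complete $N$, and Lemma~\ref{lem:completionoperadic} then forces the completed cotangent complex to vanish. Your Koszul-object justification of the monoidality step spells out what the paper delegates to the graded form of \cite[Variant 7.3.5.6]{SAG}.
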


\begin{proof}
    The graded version of \cite[Variant 7.3.5.6]{SAG} makes the completion functor monoidal, with respect to the completed tensor product, and therefore induces the usual adjunction on $\mathbb E_k$-algebra objects.  Thus $\widehat A$ is initial among derived $I$-complete $\mathbb E_k$-$A$-algebras.

    Let $N$ be a derived $I$-complete operadic $\widehat A$-module. The square-zero extension $\widehat A\oplus N$ is derived $I$-complete, since the complete subcategory is stable.  The universal property of completion implies that both mapping spaces
    \[
        \Map_{\Alg_{\mathbb E_k,A/}}(\widehat A,\widehat A\oplus N)
        \quad\text{and}\quad
        \Map_{\Alg_{\mathbb E_k,A/}}(\widehat A,\widehat A)
    \]
    are contractible.  The fiber of the map between them induced by the projection $\widehat A\oplus N\to\widehat A$, taken over the identity of $\widehat A$, is therefore contractible.  By the universal property of the operadic cotangent complex, this fiber is
    \[
        \operatorname{Der}^{\mathbb E_k}_{A}(\widehat A,N)
        \simeq
        \Map_{\Mod^{\mathbb E_k}_{\widehat A}}
        \left(
            \mathbb L^{\mathbb E_k}_{\widehat A/A},N
        \right).
    \]
    Lemma \ref{lem:completionoperadic} now gives
    \[
        \Map_{\Mod^{\mathbb E_k}_{\widehat A}}
        \left(
            \left(\mathbb L^{\mathbb E_k}_{\widehat A/A}\right)^{\wedge}_{I},N
        \right)
        \simeq *
    \]
    for every complete $N$. Taking
    \[
        N=\left(\mathbb L^{\mathbb E_k}_{\widehat A/A}\right)^{\wedge}_{I}
    \]
    shows that its identity map is nullhomotopic, and hence that this object is zero.
\end{proof}

\subsection{Application to Lubin--Tate theory}\label{subsec:lubinTateCalculation}

Fix the $\mathbb E_3$-$MU_{(p)}$-algebra form of $\mathrm{BP}\langle n\rangle$ supplied by \cite[Theorem A and Remark 1.0.11]{truncBP}.  Write
\[
    A_*:=\pi_*\mathrm{BP}\langle n\rangle
    \simeq \mathbb Z_{(p)}[v_1,\ldots,v_n],
    \qquad
    I=I_n=(p,v_1,\ldots,v_{n-1}).
\]
We use the coordinate on the universal deformation of the Honda height-$n$ formal group for which
\[
    E_*:=\pi_*E_n
    \simeq
    W(\mathbb F_{p^n})[[u_1,\ldots,u_{n-1}]][u^{\pm1}],
    \qquad |u|=-2,
\]
and the induced coefficient map is
\[
    v_i\longmapsto u_i u^{1-p^i}\quad (1\leq i<n),
    \qquad
    v_n\longmapsto u^{1-p^n}.
\]
The argument below only uses this coefficient map and the $\mathbb E_3$-$MU_{(p)}$-algebra structure on the chosen form of $\mathrm{BP}\langle n\rangle$.

Let
\[
    \widehat A_*
    :=
    \left(A_*[v_n^{-1}]\right)^{\wedge}_{I}
    \simeq
    \mathbb Z_p[[v_1,\ldots,v_{n-1}]][v_n^{\pm1}].
\]
Since $A_*[v_n^{-1}]$ is Noetherian, the derived completion is discrete and agrees with the displayed classical completion by the graded form of \cite[Corollary 7.3.6.6]{SAG}.

\begin{proposition}\label{prop:MoravaEAdicEtale}
    The coefficient map factors as
    \[
        A_*\longrightarrow A_*[v_n^{-1}]
        \longrightarrow \widehat A_*
        \longrightarrow E_*,
    \]
    where $\widehat A_*\to E_*$ is finite \'etale as a Dirac ring map.  For every $2\leq k\leq\infty$,
    \[
        \left(
            \mathbb L^{\mathbb E_k}_{E_*/A_*}
        \right)^{\wedge}_{I}
        \simeq 0.
    \]
\end{proposition}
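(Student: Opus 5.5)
The proof has two parts. First we show that the coefficient map factors through a finite \'etale map. Then we use that factorisation and the transitivity sequence to kill the completed cotangent complex.

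\emph{Step 1: the factorisation.} The images of $v_1,\ldots,v_n$ are $u_iu^{1-p^i}$ and $u^{1-p^n}$. Since $u$ is a unit, $v_n$ maps to a unit, so the map factors through $A_*[v_n^{-1}]$. The target $E_*$ is $I$-adically complete: $I$ generates the maximal ideal $(p,u_1,\ldots,u_{n-1})$ of the complete local ring in degree zero, because each $u_i=v_i v_n^{(p^i-1)/(p^n-1)}$ up to units in the evident sense. Precisely, $u_i=v_iu^{p^i-1}$ and $u^{p^n-1}=v_n^{-1}$. Hence the map factors through $\widehat A_*$.

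We then identify $\widehat A_*\to E_*$ explicitly. Set $w=u^{-1}$. Then $E_*$ is obtained from $\widehat A_*$ in two moves:
\begin{enumerate}
\item adjoin a root $w$ of $w^{p^n-1}=v_n$;
\item tensor with $W(\mathbb F_{p^n})$ over $\mathbb Z_p$.
\end{enumerate}
After these moves, $u_i=v_iw^{1-p^i}$ ranges over $\mathbb Z_p[[v_1,\ldots,v_{n-1}]]$ with $w$ adjoined. So
\[
E_*\simeq \widehat A_*[w]/(w^{p^n-1}-v_n)\otimes_{\mathbb Z_p}W(\mathbb F_{p^n}).
\]
The first factor is finite \'etale as a Dirac ring map. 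Two facts give this. First, $p^n-1$ is prime to $p$, hence a unit in $\mathbb Z_p$. Second, $v_n$ is a unit, so the derivative $(p^n-1)w^{p^n-2}$ is a unit. Here $w$ has even degree $2$, so the Koszul signs play no role and the ordinary standard-\'etale presentation and criterion apply verbatim in Dirac rings. The unramified extension $W(\mathbb F_{p^n})/\mathbb Z_p$ is \'etale, and base change and composition of \'etale Dirac maps are \'etale. This gives the first claim.

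\emph{Step 2: transitivity.} Transitivity for $A_*\to A_*[v_n^{-1}]\to\widehat A_*\to E_*$ expresses $\mathbb L^{\mathbb E_k}_{E_*/A_*}$ as an iterated extension of three base-changed pieces:
\begin{itemize}
\item $E_*\otimes\mathbb L_{A_*[v_n^{-1}]/A_*}$;
\item $E_*\otimes\mathbb L_{\widehat A_*/A_*[v_n^{-1}]}$;
\item $\mathbb L_{E_*/\widehat A_*}$.
\end{itemize}
Here the tensor products are operadic base change as in Lemma \ref{basechange}. Completion is exact by Lemma \ref{lem:completionoperadic}, so it suffices that each piece becomes zero after $I$-completion. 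The first piece vanishes before completion. Localisation is an epimorphism of $\mathbb E_\infty$-rings and \'etale in the Dirac sense, so Proposition \ref{discretecotang} applies for $k\geq2$. The third piece vanishes by Proposition \ref{discretecotang} together with Step 1.

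\emph{Step 3: the middle piece, which is the main obstacle.} Lemma \ref{lem:completionformal}, applied to the Noetherian ring $A_*[v_n^{-1}]$, gives $(\mathbb L_{\widehat A_*/A_*[v_n^{-1}]})^\wedge_I\simeq0$ in $\Mod^{\mathbb E_k}_{\widehat A_*}$. What has to be checked is that base change along $\widehat A_*\to E_*$ commutes with completion, in the form
\[
(E_*\otimes M)^\wedge_I\simeq (E_*\otimes M^\wedge_I)^\wedge_I.
\]
I would argue via mapping spaces rather than directly. For a complete operadic $E_*$-module $N$, the extension--restriction adjunction gives
\[
\Map(E_*\otimes M,N)\simeq\Map(M,N|_{\widehat A_*}).
\]
The restriction $N|_{\widehat A_*}$ is still derived $I$-complete, because completeness is detected on the underlying graded module via the action of generators of $I$. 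So the right-hand side is contractible by Lemma \ref{lem:completionformal} and Lemma \ref{lem:completionoperadic}. Hence the completed middle piece maps trivially into every complete $N$. Taking $N$ to be the completed piece itself shows that it vanishes. Combining the three pieces gives $(\mathbb L^{\mathbb E_k}_{E_*/A_*})^\wedge_I\simeq0$.
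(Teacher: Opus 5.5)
Your proposal is correct and follows essentially the same route as the paper: the same finite \'etale presentation of $\widehat A_*\to E_*$ (your $w=u^{-1}$ with $w^{p^n-1}=v_n$ is the paper's $u^{d}=v_n^{-1}$), transitivity along $A_*\to A_*[v_n^{-1}]\to\widehat A_*\to E_*$, and Lemma~\ref{lem:completionformal} for the completion step. The only difference is the base-change step: the paper cites a graded form of \cite[Proposition 7.3.5.1]{SAG} to replace $E_*\boxtimes\mathbb L$ by $E_*\boxtimes\mathbb L^{\wedge}_I$ before completing, whereas you prove the needed vanishing directly by the extension--restriction adjunction (restriction preserves derived $I$-completeness, then test against the completed object itself), which is a more self-contained argument.
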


\begin{proof}
    Put $d=p^n-1$.  After the finite unramified extension
    \[
        \mathbb Z_p\longrightarrow W(\mathbb F_{p^n}),
    \]
    adjoin a homogeneous invertible element $u$ satisfying
    \[
        u^d=v_n^{-1}.
    \]
    This is a finite \'etale extension because the derivative
    \[
        d\,u^{d-1}
    \]
    is invertible: $d$ is prime to $p$ and $u$ is a unit.  The substitutions
    \[
        u_i=v_i u^{p^i-1}\qquad (1\leq i<n)
    \]
    identify the resulting graded ring with
    \[
        W(\mathbb F_{p^n})[[u_1,\ldots,u_{n-1}]][u^{\pm1}]=E_*.
    \]
    Hence $\widehat A_*\to E_*$ is finite \'etale.

    The localization $A_*\to A_*[v_n^{-1}]$ is an \'etale map of Dirac rings, so
    \[
        \mathbb L^{\mathbb E_k}_{A_*[v_n^{-1}]/A_*}\simeq0
    \]
    for $k\geq2$ by \cite[Proposition 4.35]{DG1}.  The transitivity sequence and Lemma \ref{lem:completionformal} therefore give
    \[
        \left(
            \mathbb L^{\mathbb E_k}_{\widehat A_*/A_*}
        \right)^{\wedge}_{I}
        \simeq0.
    \]

    Apply transitivity once more to
    \[
        A_*\longrightarrow \widehat A_*\longrightarrow E_*.
    \]
    It gives a cofiber sequence
    \[
        E_*\boxtimes_{\widehat A_*}
        \mathbb L^{\mathbb E_k}_{\widehat A_*/A_*}
        \longrightarrow
        \mathbb L^{\mathbb E_k}_{E_*/A_*}
        \longrightarrow
        \mathbb L^{\mathbb E_k}_{E_*/\widehat A_*}.
    \]
    Derived $I$-completion is exact by Lemma \ref{lem:completionoperadic}.  Moreover, the graded form of \cite[Proposition 7.3.5.1]{SAG} identifies the completion of the first term with the completion of
    \[
        E_*\boxtimes_{\widehat A_*}
        \left(
            \mathbb L^{\mathbb E_k}_{\widehat A_*/A_*}
        \right)^{\wedge}_{I},
    \]
    and this vanishes.  The last term vanishes before completion because $\widehat A_*\to E_*$ is \'etale, again by \cite[Proposition 4.35]{DG1}.  The completed middle term is therefore zero.
\end{proof}

\begin{cor}[An $\mathbb E_3$-$MU_{(p)}$-algebra structure on $E_n$]\label{cor:LubinTateE3MU}
    The standard coefficient map
    \[
        A_*=\pi_*\mathrm{BP}\langle n\rangle
        \longrightarrow E_*
    \]
    admits a realization by an $I_n$-complete $\mathbb E_3$-$MU_{(p)}$-algebra under $\mathrm{BP}\langle n\rangle$. More precisely, the space of $I_n$-complete $\mathbb E_3$-$MU_{(p)}$-algebras $B$ under $\mathrm{BP}\langle n\rangle$, equipped with a specified identification
    \[
        \pi_*B\simeq E_*
    \]
    compatible with the displayed coefficient map, is contractible.

    Consequently, there exists an $\mathbb E_3$-$MU_{(p)}$-algebra $B$ and a morphism
    \[
        \mathrm{BP}\langle n\rangle\longrightarrow B
    \]
    inducing the displayed map on homotopy groups. The underlying complex-oriented spectrum $B$ is equivalent to the Lubin--Tate spectrum $E_n$. Transporting the structure across such an equivalence yields an $\mathbb E_3$-$MU_{(p)}$-algebra map
    \[
        \mathrm{BP}\langle n\rangle\longrightarrow E_n
    \]
    inducing the standard coefficient map.
\end{cor}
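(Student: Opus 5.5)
The plan is to apply Proposition~\ref{prop:completedPVexistence} with $k=3$ and $R=MU_{(p)}$, which is an $\mathbb E_4$-ring spectrum (indeed $\mathbb E_\infty$). Take $A=\mathrm{BP}\langle n\rangle$ with the $\mathbb E_3$-$MU_{(p)}$-algebra structure of \cite{truncBP}, so that $A_0=\pi_*A=A_*$. Set $B_0=E_*$ with the displayed coefficient map, and let $I=I_n$. We then have to check the two hypotheses.

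For hypothesis (1), note that $E_*=W(\mathbb F_{p^n})[[u_1,\ldots,u_{n-1}]][u^{\pm1}]$ is classically $(p,u_1,\ldots,u_{n-1})$-adically complete in each degree. The image of $I_n$ in $E_*$ is $(p,u_1u^{1-p},\ldots)$, which differs from this ideal only by units. Since $E_*$ is Noetherian, classical completeness implies derived completeness by the graded form of \cite[Corollary 7.3.6.6]{SAG}. Shifts and suspensions of $E_*$ remain complete because the complete subcategory is stable (Lemma~\ref{lem:completionoperadic}).

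Hypothesis (2) is exactly Proposition~\ref{prop:MoravaEAdicEtale} with $k=3\geq2$. One point needs care. The cotangent complex in the proposition is taken in operadic modules in the derived category of graded $\pi_*MU_{(p)}$-modules, whereas Proposition~\ref{prop:MoravaEAdicEtale} is phrased over $A_*$. I would use the change-of-base argument from the proof of Proposition~\ref{discretecotang} to identify these two relative complexes.

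With both hypotheses verified, Proposition~\ref{prop:completedPVexistence} gives contractibility of the space of realizations, namely $\mathbb E_3$-$MU_{(p)}$-algebras $B$ under $\mathrm{BP}\langle n\rangle$ with $\pi_*B\simeq E_*$. Any such $B$ is automatically $I_n$-complete, since completeness is detected on homotopy groups and $\pi_*B=E_*$ is complete. This accounts for the phrasing of the statement.

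The final step, and in my view the main subtlety, is identifying the underlying spectrum of $B$ with $E_n$. I would argue as follows. The composite $MU_{(p)}\to\mathrm{BP}\langle n\rangle\to B$ gives $B$ a complex orientation, and the formal group of $B$ over $\pi_0$ (after using $u$ to make it even periodic) is pushed forward along the coefficient map. The coefficient map was chosen to classify the universal deformation of the Honda formal group. Hence $B$ is Landweber exact, by the standard Landweber criterion using that $(p,v_1,\ldots,v_{n-1},v_n)$ maps to a regular sequence with $v_n$ invertible. It therefore agrees as a homotopy commutative complex-oriented ring spectrum with the Landweber exact spectrum $E_n$, since both are determined by their formal group data.

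Transporting the $\mathbb E_3$-$MU_{(p)}$-structure across this equivalence then gives the map $\mathrm{BP}\langle n\rangle\to E_n$ inducing the standard coefficient map. The one remaining point is to make sure the equivalence is chosen compatibly with the identification $\pi_*B\simeq E_*$; this follows from the Landweber exact functor theorem, which produces a map of homology theories that is an isomorphism on coefficients.
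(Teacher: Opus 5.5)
Your proposal follows the paper's proof. You apply Proposition~\ref{prop:completedPVexistence} with $k=3$, $R=MU_{(p)}$ and $A=\mathrm{BP}\langle n\rangle$, check derived $I_n$-completeness of $E_*$ and the completed cotangent vanishing of Proposition~\ref{prop:MoravaEAdicEtale}, and then identify $B$ with $E_n$ through Landweber exactness.

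The only divergence is the last step, and there the paper's tool is the precise one: Hovey--Strickland's theorem identifying Landweber exact $MU$-module spectra with their coefficient modules. You instead invoke the Landweber exact functor theorem, which gives only an isomorphism of homology theories. That yields an equivalence of spectra compatible with $\pi_*B\simeq E_*$ only after phantom maps are ruled out. You can close this by noting that $B$ is an $MU_{(p)}$-module and citing that theorem, as the paper does.
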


\begin{proof}
    Proposition \ref{prop:MoravaEAdicEtale} gives
    \[
        \left(
            \mathbb L^{\mathbb E_3}_{E_*/A_*}
        \right)^{\wedge}_{I}
        \simeq0.
    \]
    The ring $E_*$ and all of its grading and suspension shifts are derived $I$-complete. Proposition \ref{prop:completedPVexistence}, applied in
    \[
        \Alg_{\mathbb E_3}(\Mod_{MU_{(p)}}),
    \]
    therefore shows that the space of realizations under the fixed source $\mathrm{BP}\langle n\rangle$, equipped with the specified identification of homotopy groups with $E_*$, is contractible.

    The induced $MU_*$-algebra structure on $E_*$ is the universal deformation of the chosen height-$n$ formal group and is Landweber exact. Theorem~2.8 of \cite{HoveyStrickland} identifies Landweber exact $MU$-module spectra with their coefficient modules. Hence the underlying $MU_{(p)}$-module spectrum of $B$ is equivalent to the standard Landweber exact realization of this coefficient algebra, namely the Lubin--Tate spectrum $E_n$. Transport of structure along an equivalence $B\simeq E_n$ gives the asserted map.
\end{proof}

Thus the existence statement of \cite[Construction 5.10]{BHLS} is recovered from the completed relative obstruction theory. However, we do not address whether the resulting $\mathbb E_3$-$MU_{(p)}$-algebra structure is equivalent to the one constructed there.

Combining this construction with the self-centrality equivalence of \cite[Proposition 5.11]{BHLS}, we recover the $\mathbb E_4$-orientation stated in \cite[Corollary 5.12]{BHLS}.

\begin{cor}[The $\mathbb E_4$-orientation]\label{cor:LubinTateE4orientation}
    There exists an $\mathbb E_4$-ring map
    \[
        MU_{(p)}\longrightarrow E_n
    \]
    whose underlying complex orientation is the standard Lubin--Tate orientation.
\end{cor}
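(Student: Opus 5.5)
We obtain the $\mathbb E_4$-ring map $MU_{(p)}\to E_n$ by combining Corollary~\ref{cor:LubinTateE3MU} with the self-centrality theorem \cite[Proposition~5.11]{BHLS}, following the pattern of \cite[Corollary~5.12]{BHLS}. By Corollary~\ref{cor:LubinTateE3MU}, $E_n$ carries an $\mathbb E_3$-$MU_{(p)}$-algebra structure together with an $\mathbb E_3$-$MU_{(p)}$-algebra map $\mathrm{BP}\langle n\rangle\to E_n$ inducing the standard coefficient map. The key input is Dunn additivity: an $\mathbb E_3$-algebra in $\Mod_{MU_{(p)}}$, where $MU_{(p)}$ is $\mathbb E_\infty$, is the same as an $\mathbb E_3$-ring $B$ together with an $\mathbb E_4$-map $MU_{(p)}\to \mathfrak Z_{\mathbb E_3}(B)$ into its $\mathbb E_3$-center. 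The center $\mathfrak Z_{\mathbb E_3}(B)$ is $\mathbb E_4$ by \cite[5.3.1.14]{HA}.

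\textbf{Step 1.} Regard $B\simeq E_n$ as an $\mathbb E_3$-ring via restriction. The $MU_{(p)}$-linear structure then supplies an $\mathbb E_4$-map
\[
    MU_{(p)}\longrightarrow \mathfrak Z_{\mathbb E_3}(E_n).
\]

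\textbf{Step 2.} Invoke \cite[Proposition~5.11]{BHLS}. For the $I_n$-complete $\mathbb E_3$-structure on $E_n$ in question, the canonical $\mathbb E_4$-map $\mathfrak Z_{\mathbb E_3}(E_n)\to E_n$ (evaluation at the unit) is an equivalence, or at least admits an $\mathbb E_4$-structure on the target compatible with the underlying ring. Composing with the map from Step 1 gives an $\mathbb E_4$-ring map $MU_{(p)}\to E_n$.

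The main obstacle is checking that the hypotheses of \cite[Proposition~5.11]{BHLS} hold for our structure rather than theirs, since we explicitly do not claim the two $\mathbb E_3$-structures agree. I would first verify that their self-centrality argument uses only the following properties of the $\mathbb E_3$-$MU_{(p)}$-algebra:
\begin{enumerate}
    \item $I_n$-completeness;
    \item the homotopy ring $E_*$;
    \item the vanishing of suitable (completed) Hochschild or $\mathbb E_3$-cotangent data, which here is supplied by Proposition~\ref{prop:MoravaEAdicEtale}.
\end{enumerate}
Under these conditions, the centrality argument, which computes $\pi_*\mathfrak Z_{\mathbb E_3}(E_n)$ via a completed $\mathbb E_3$-Hochschild spectral sequence that collapses to $E_*$, should apply verbatim.

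\textbf{Step 3.} Identify the complex orientation. The underlying unital map of the $\mathbb E_4$-map $MU_{(p)}\to E_n$ is the unit of the $MU_{(p)}$-module structure on $B$, and it factors through $\mathrm{BP}\langle n\rangle$ compatibly with the coefficient map $v_i\mapsto u_iu^{1-p^i}$. Its formal group law is therefore the universal deformation's. By the Landweber exactness identification in the proof of Corollary~\ref{cor:LubinTateE3MU}, the induced class in $\widetilde{E}_n^{2}(\mathbb{CP}^\infty)$ is the standard Lubin--Tate coordinate. This gives the asserted orientation.
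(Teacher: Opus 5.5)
Your argument is the paper's: the $\mathbb E_3$-$MU_{(p)}$-algebra structure from Corollary~\ref{cor:LubinTateE3MU} gives, by the universal property of the $\mathbb E_3$-center, an $\mathbb E_4$-map $MU_{(p)}\to Z_{\mathbb E_3}(E_n)$, \cite[Proposition~5.11]{BHLS} identifies $Z_{\mathbb E_3}(E_n)\simeq E_n$, and composing gives the orientation; your Step~3 spells out the orientation check that the paper leaves implicit. Your caveat, that \cite[Proposition~5.11]{BHLS} must apply to this possibly non-standard $\mathbb E_3$-structure, concerns a point the paper passes over by direct citation; if you pursue it, note that Proposition~\ref{prop:MoravaEAdicEtale} is a completed algebraic statement relative to $\pi_*\mathrm{BP}\langle n\rangle$, and it does not by itself control the absolute $\mathbb E_3$-Hochschild cohomology that governs $Z_{\mathbb E_3}(E_n)$.
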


\begin{proof}
    By the universal property of the $\mathbb E_3$-center, the $\mathbb E_3$-$MU_{(p)}$-algebra structure constructed above determines an $\mathbb E_4$-map
    \[
        MU_{(p)}\longrightarrow Z_{\mathbb E_3}(E_n).
    \]
    The self-centrality theorem \cite[Proposition 5.11]{BHLS} identifies the canonical map
    \[
        Z_{\mathbb E_3}(E_n)\longrightarrow E_n
    \]
    as an equivalence of $\mathbb E_4$-algebras.  Composing gives the required $\mathbb E_4$-orientation.
\end{proof}

\appendix
\section{An alternative proof by Dunn additivity}
\label{app:dunn-additivity}

We give a second proof of Theorem~\ref{k+1} for $k\geq2$, following
\cite[Proof of Theorem~7.5.4.2]{HA}.  This appendix is independent of the
proof used in the body of the paper; its only rigidity input is the case
$k=1$ of Theorem~\ref{k+1}.

Fix an $\mathbb E_{k+1}$-ring spectrum $R$ and a $\pi_*$-Dirac
$\mathbb E_k$-$R$-algebra $C$.  The category
$\Alg_{\mathbb E_1}(\Mod_R)$ is $\mathbb E_{k-1}$-monoidal.  For a
homogeneous class $x\in\pi_dB$, represented by
$\widetilde x:\Sigma^dR\to B$, let
\[
    \lambda_x,\rho_x:\Sigma^dB\longrightarrow B
\]
be graded left and right multiplication:
\[
\begin{split}
    \lambda_x&:\Sigma^dR\otimes_RB
        \xrightarrow{\widetilde x\otimes\mathrm{id}}
        B\otimes_RB\xrightarrow{\mu_B}B,\\
    \rho_x&:\Sigma^dR\otimes_RB
        \xrightarrow{\beta_{\Sigma^dR,B}}
        B\otimes_R\Sigma^dR
        \xrightarrow{\mathrm{id}\otimes\widetilde x}
        B\otimes_RB\xrightarrow{\mu_B}B.
\end{split}
\]

\begin{definition}\label{def:good-prime}
    An $\mathbb E_1$-$R$-algebra $B$ is \emph{good$'$} if
    $\lambda_x\simeq\rho_x$ for every homogeneous $x\in\pi_*B$.
\end{definition}

This is the graded analogue of Lurie's notion of a good algebra.  We use the
following analogues of the elementary observations in
\cite[Proof of Theorem~7.5.4.2]{HA}.

\begin{lemma}\label{lem:good-prime-properties}
    \begin{enumerate}
        \item The underlying $\mathbb E_1$-algebra of an
        $\mathbb E_2$-$R$-algebra is good$'$.
        \item If $B$ is good$'$, then $\pi_*B$ is a Dirac ring.
        \item Good$'$ $\mathbb E_1$-$R$-algebras are closed under tensor
        products.
    \end{enumerate}
\end{lemma}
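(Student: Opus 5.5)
We prove the three parts in order, following the elementary observations in Lurie's proof of \cite[Theorem~7.5.4.2]{HA} but keeping track of the Koszul sign built into the braiding $\beta_{\Sigma^dR,B}$.

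\textbf{Part (1).} Let $B$ be an $\mathbb E_2$-$R$-algebra. Its multiplication $\mu_B$ is then a map of $\mathbb E_1$-algebras, and the two composites $\mu_B$ and $\mu_B\circ\beta_{B,B}$ are homotopic, the homotopy being supplied by a path in $\mathcal C_2(2)\simeq S^1$. Precomposing with $\widetilde x\otimes\mathrm{id}$ and using naturality of $\beta$ in the first variable identifies $\lambda_x$ with $\mu_B\circ\beta_{B,B}\circ(\widetilde x\otimes\mathrm{id})\simeq\mu_B\circ(\mathrm{id}\otimes\widetilde x)\circ\beta_{\Sigma^dR,B}=\rho_x$. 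One subtlety is that the braiding on $\Mod_R$ is only $\mathbb E_k$-monoidal. For $k\ge2$, however, the braiding $\beta_{\Sigma^dR,B}$ appearing in the definition of $\rho_x$ is part of that structure, and I would use the same braiding throughout so that the argument stays consistent.

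\textbf{Part (2).} On homotopy, $\lambda_x$ induces $y\mapsto xy$. The map $\rho_x$ induces $y\mapsto(-1)^{|x||y|}yx$, because $\beta_{\Sigma^dR,B}$ acts on $\pi_*$ by the Koszul sign. This is the one computation that needs care: one must check that the swap of the sphere $S^d$ past $S^{|y|}$ contributes exactly $(-1)^{d|y|}$, which follows by reducing to the symmetric monoidal structure on spectra through the $R$-linear structure. Granting this, good$'$ gives $xy=(-1)^{|x||y|}yx$. Since $\pi_*B$ is already an associative graded ring, it is a Dirac ring. Whether $x^2=0$ is required for odd $x$ depends on the convention for Dirac rings in \cite{DG1}; with the definition as commutative algebras in $\operatorname{grAb}$ given above, only graded commutativity is needed.

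\textbf{Part (3).} Suppose $B$ and $B'$ are good$'$. The tensor product $B\otimes_RB'$ exists as an $\mathbb E_1$-algebra because $\Alg_{\mathbb E_1}(\Mod_R)$ is $\mathbb E_{k-1}$-monoidal with $k\ge2$. Both $\lambda$ and $\rho$ are additive in $x$ and compatible with products: $\lambda_{xy}=\lambda_x\lambda_y$ up to shift, and likewise for $\rho$. So it suffices to treat generators of $\pi_*(B\otimes_RB')$ as a module over the classes coming from $B$ and $B'$.

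This reduction is not enough by itself, since $\pi_*(B\otimes_RB')$ need not be generated by the images of $\pi_*B$ and $\pi_*B'$ (there is a Künneth spectral sequence). Instead I would argue at the level of maps. First identify $\lambda^{B\otimes B'}_z$, for an arbitrary $z:\Sigma^dR\to B\otimes_RB'$, with a composite
\[
\Sigma^dR\otimes(B\otimes B')\xrightarrow{z\otimes\mathrm{id}}(B\otimes B')\otimes(B\otimes B')\xrightarrow{\text{shuffle}}(B\otimes B)\otimes(B'\otimes B')\xrightarrow{\mu\otimes\mu'}B\otimes B'.
\]
Then show that the good$'$ property of each factor implies that the two multiplications $\mu_B$ and $\mu_B\circ\beta$, restricted along $z$, agree. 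Part (3) is therefore the main obstacle: good$'$ is stated only for maps out of spheres, whereas the argument needs it for the generalized element $z$. I would first try to prove the stronger statement that $\lambda$ and $\rho$ agree as natural transformations on all of $\Mod_R$, say by a universal-element argument with $\Sigma^dR$ replaced by $B$ itself. If that fails, I would restrict attention to the good$'$ algebras actually used later, namely those with flat homotopy.
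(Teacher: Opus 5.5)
Your parts (1) and (2) are the paper's argument. For (1), restrict the interchange homotopy $\mu_B\simeq\mu_B\circ\beta_{B,B}$ along $\widetilde x\otimes\mathrm{id}_B$. For (2), evaluate $\lambda_x\simeq\rho_x$ on $y\in\pi_eB$; the braiding with $\Sigma^dR$ contributes $(-1)^{de}$. You are also right that only graded commutativity is required of a Dirac ring here.

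\textbf{Part (3) is not proved.} Your proposal ends with two strategies, and neither closes the gap. The universal-element strengthening replaces $\Sigma^dR$ by $B$ and $\widetilde x$ by $\mathrm{id}_B$, so it asserts $\mu_B\simeq\mu_B\circ\beta_{B,B}$, i.e.\ homotopy commutativity of $B$. Good$'$ only says that the commutator $\mu_B-\mu_B\circ\beta_{B,B}$ dies after precomposition with $\widetilde x\otimes\mathrm{id}_B$ for maps $\widetilde x$ out of the spheres $\Sigma^dR$. So that stronger statement is not available from the hypothesis. Restricting to algebras with flat homotopy proves a different lemma.

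The step the paper uses is Lurie's sliding argument from \cite[Proof of Theorem~7.5.4.2]{HA}, run degree by degree. Take $x\in\pi_dB$, $y\in\pi_eB'$, and $z$ represented by $\widetilde x\otimes\widetilde y:\Sigma^dR\otimes_R\Sigma^eR\to B\otimes_RB'$. Naturality of the braiding gives $\lambda_z\simeq(\lambda_x\otimes\lambda_y)\circ\gamma$ and $\rho_z\simeq(\rho_x\otimes\rho_y)\circ\gamma'$. Here $\gamma,\gamma'$ are rearrangements of $\Sigma^dR\otimes\Sigma^eR\otimes B\otimes B'$ into $\Sigma^dR\otimes B\otimes\Sigma^eR\otimes B'$ with the same underlying permutation, in which only the invertible objects $\Sigma^dR,\Sigma^eR$ cross $B,B'$. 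The double braiding of $\Sigma^dR$ with any $R$-module is canonically trivial, so $\gamma\simeq\gamma'$. Good$'$ of the two factors then gives $\lambda_z\simeq\rho_z$, and additivity of $z\mapsto\lambda_z,\rho_z$ extends this to sums.

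\textbf{Your obstacle is genuine, and the paper does not resolve it either.} The sliding argument covers exactly the image of $\pi_*B\otimes_{\pi_*R}\pi_*B'\to\pi_*(B\otimes_RB')$, but the lemma is asserted for every homogeneous class. The paper's sketch does not say how the remaining, indecomposable classes are handled. Good$'$ says only that a commutator map for each factor vanishes on homotopy groups. Vanishing on homotopy is not in general stable under tensoring with another module. So those classes need either a separate argument or an extra hypothesis that makes the K\"unneth spectral sequence collapse, for instance $\pi_*B'$ flat over $\pi_*R$.

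Your flatness fallback points in the right direction. But as a proof of (3) as stated, your proposal is missing both the sliding step for decomposable classes and any argument for the rest.
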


\begin{proof}
    The first assertion follows by restricting the interchange homotopy along
    $\widetilde x\otimes\mathrm{id}_B$.  For the second, evaluating
    $\lambda_x\simeq\rho_x$ on $y\in\pi_eB$ gives
    \[
        xy=(-1)^{de}yx.
    \]

    For the third, repeat the proof of observation~(iii) in
    \cite[Proof of Theorem~7.5.4.2]{HA} degree by degree.  A homogeneous
    class of degree $d$ is represented by a map from the invertible object
    $\Sigma^dR$.  The double braiding of $\Sigma^dR$ with any $R$-module is
    canonically homotopic to the identity, while a single braiding records the
    Koszul sign.  Lurie's sliding argument therefore identifies graded left
    and right multiplication by every homogeneous class of a tensor product.
\end{proof}

We also need the graded analogue of \cite[Lemma~7.5.4.8]{HA}.

\begin{lemma}[Tensor base change for $\pi_*$-\'etale maps]
\label{lem:graded-etale-tensor}
    Let $f:B\to B'$ be a $\pi_*$-\'etale morphism of $\pi_*$-Dirac
    $\mathbb E_1$-$R$-algebras, and let $D$ be another
    $\mathbb E_1$-$R$-algebra.  If $B\otimes_RD$ is $\pi_*$-Dirac, then
    \[
        f\otimes\mathrm{id}_D:
        B\otimes_RD\longrightarrow B'\otimes_RD
    \]
    is $\pi_*$-\'etale, and
    \[
        \pi_*(B'\otimes_RD)
        \simeq
        \pi_*B'\otimes_{\pi_*B}\pi_*(B\otimes_RD).
    \]
\end{lemma}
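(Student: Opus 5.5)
The plan is to follow Lurie's proof of \cite[Lemma~7.5.4.8]{HA}, replacing $\pi_0$-flatness by flatness of the Dirac map $\pi_*B\to\pi_*B'$. The argument has an algebraic part, a Künneth-type computation, and a formal conclusion.

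\textbf{Algebraic input.} Put $B_*=\pi_*B$ and $B'_*=\pi_*B'$. By Definition~\ref{defet} and the remark that \'etale maps of Dirac rings are even and flat, $B'_*$ is a flat graded $B_*$-module. Moreover, \'etale maps of Dirac rings are stable under base change along an arbitrary map of Dirac rings $B_*\to T_*$. Finite presentation is preserved by base change. For the formal lifting property, the universal property of the tensor product in Dirac rings reduces a lifting problem for $T_*\to T_*\otimes_{B_*}B'_*$ to one for $B_*\to B'_*$.

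\textbf{Künneth step.} Regard $B'\otimes_R D$ as $B'\otimes_B(B\otimes_R D)$, where $B'$ is a right $B$-module via $f$ and $B\otimes_R D$ is a left $B$-module. The Künneth (Tor) spectral sequence
\[
E^2_{s,t}=\operatorname{Tor}^{B_*}_{s,t}\bigl(B'_*,\pi_*(B\otimes_RD)\bigr)\Longrightarrow \pi_{s+t}(B'\otimes_RD)
\]
collapses onto the line $s=0$ by flatness of $B'_*$. This gives
\[
\pi_*(B'\otimes_RD)\simeq B'_*\otimes_{B_*}\pi_*(B\otimes_RD),
\]
and the isomorphism is induced by $f\otimes\mathrm{id}_D$ together with the multiplication on $B'\otimes_RD$.

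One point needs care: $B_*$ is a Dirac ring but $B$ is only $\mathbb E_1$. The spectral sequence is still available for $\mathbb E_1$-rings, with $\operatorname{Tor}$ taken over the associative graded ring $B_*$. Flatness of $B'_*$ as a right $B_*$-module follows from flatness as a left module, because $B_*$ is graded-commutative and left and right flatness agree up to Koszul signs.

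\textbf{Conclusion.} The isomorphism above is compatible with the ring structures: the map $\pi_*(B\otimes_RD)\to\pi_*(B'\otimes_RD)$ is a ring map, and $B'_*\to\pi_*(B'\otimes_RD)$, induced by $b'\mapsto b'\otimes 1$, is a ring map. Since $B\otimes_RD$ is assumed $\pi_*$-Dirac, $\pi_*(B'\otimes_RD)$ is then identified as a ring with the Dirac tensor product $B'_*\otimes_{B_*}\pi_*(B\otimes_RD)$. That tensor product is a Dirac ring, so $B'\otimes_RD$ is $\pi_*$-Dirac. The map $\pi_*(f\otimes\mathrm{id}_D)$ is then the base change of the \'etale map $B_*\to B'_*$, hence \'etale, which proves that $f\otimes\mathrm{id}_D$ is $\pi_*$-\'etale.

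\textbf{Expected obstacle.} The hardest step is proving that the module isomorphism respects multiplication. This requires that the images of $B'_*$ and of $\pi_*(B\otimes_RD)$ in $\pi_*(B'\otimes_RD)$ graded-commute with each other in the appropriate sense. I would first try to deduce this from the $\pi_*$-Dirac hypothesis on $B\otimes_RD$ together with the Koszul sign bookkeeping of braidings of $\Sigma^dR$, as in the proof of Lemma~\ref{lem:good-prime-properties}.
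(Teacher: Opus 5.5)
Your route is the same as the paper's: identify $B'\otimes_RD\simeq B'\otimes_B(B\otimes_RD)$, use flatness of $\pi_*B'$ over $\pi_*B$ to collapse the K\"unneth spectral sequence, and get \'etaleness by base change. Those steps are fine. The genuine gap is the one you flag yourself, but your \textbf{Conclusion} paragraph then treats it as settled.

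Write $T=\pi_*(B\otimes_RD)$, $S=\pi_*(B'\otimes_RD)$, $F=f\otimes\mathrm{id}_D$, and $\iota\colon B'\to B'\otimes_RD$. Multiplicativity of the K\"unneth map $b'\otimes t\mapsto\iota_*(b')F_*(t)$ and graded-commutativity of $S$ both amount to $\iota_*(\pi_*B')$ graded-commuting with $F_*(T)$ inside $S$. That $\iota_*$ and $F_*$ are ring maps and $T$ is graded-commutative does not give this. The paper's proof has the same lacuna: it is covered only by the sentence that the induced homotopy-ring map is the base change of $\pi_*B\to\pi_*B'$.

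No bookkeeping on homotopy alone can close it. Take, all in degree $0$, $\pi_*B=k$ a field, $\pi_*B'=k\times k$, $T=k[N]/(N^2-1)$ and $S=M_2(k)$, with $(1,0)\mapsto E_{11}$ and $N\mapsto E_{12}+E_{21}$. Then both multiplication maps $\pi_*B'\otimes_kT\to S$ and $T\otimes_k\pi_*B'\to S$ are bijective, yet $E_{11}N=E_{12}\neq E_{21}=NE_{11}$. This is not a counterexample to the lemma, but it shows that the homotopy-level data, including both K\"unneth isomorphisms, do not force the commutation.

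What would close the gap is spectrum-level input. For each homogeneous $b'\in\pi_eB'$, you need a homotopy of $R$-module maps $\lambda_{b'}\circ(\mathrm{id}\otimes f)\simeq\rho_{b'}\circ(\mathrm{id}\otimes f)\colon\Sigma^eR\otimes_RB\to B'$, with $\lambda,\rho$ as in the appendix. After tensoring with $D$, and using that the double braiding of $\Sigma^eR$ is trivial, these two maps induce $t\mapsto\iota_*(b')F_*(t)$ and $t\mapsto\pm F_*(t)\iota_*(b')$ on $T$. So such a homotopy makes $\iota_*(\pi_*B')$ graded-central in $S$, and both conclusions follow.

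Graded-commutativity of $\pi_*B'$ only says the two maps agree on $\pi_*$. That information is lost after $-\otimes_RD$, because a class in $T$ need not be a sum of products of classes from $\pi_*B$ and $\pi_*D$.

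The sliding argument of Lemma~\ref{lem:good-prime-properties} that you propose to imitate consumes exactly such homotopies, and the hypotheses here supply none for $B'$. If $B$ is good$'$, you get them for $b'$ in the image of $\pi_*B$. For general $b'$ you must either prove that good$'$ passes along $\pi_*$-\'etale maps, or add such a hypothesis. The former is also what the appendix's applications, whose sources are good$'$, would need.
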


\begin{proof}
    There is a canonical equivalence of underlying modules
    \[
        B'\otimes_RD\simeq B'\otimes_B(B\otimes_RD).
    \]
    Since $\pi_*B'$ is flat over $\pi_*B$, the K\"unneth spectral sequence
    for extension of scalars collapses and gives the displayed isomorphism.
    The induced homotopy-ring map is the base change of
    $\pi_*B\to\pi_*B'$, and is therefore \'etale.  In particular,
    $B'\otimes_RD$ is $\pi_*$-Dirac.
\end{proof}

Let $\mathcal C$ be the full subcategory of
\[
    \bigl(\Alg_{\mathbb E_1}(\Mod_R)\bigr)_{/C}
\]
spanned by the maps $B\to C$ for which $B$ is good$'$.  By
Lemma~\ref{lem:good-prime-properties}, $\mathcal C$ is closed under tensor
products and inherits an $\mathbb E_{k-1}$-monoidal structure.

Let $\mathcal D$ be the full subcategory of
\[
    \Fun\!\left(
        \Delta^1,
        \bigl(\Alg_{\mathbb E_1}(\Mod_R)\bigr)_{/C}
    \right)
\]
spanned by the $\pi_*$-\'etale maps $B\to B'$ whose source is good$'$.
The pointwise tensor product makes $\mathcal D$ an
$\mathbb E_{k-1}$-monoidal category.  Indeed, if
$f_i:B_i\to B_i'$ belong to $\mathcal D$, then
\[
    f_0\otimes f_1
    =
    (\mathrm{id}_{B_0'}\otimes f_1)
    \circ
    (f_0\otimes\mathrm{id}_{B_1}).
\]
The source $B_0\otimes_RB_1$ is good$'$, and two applications of
Lemma~\ref{lem:graded-etale-tensor} show that the composite is
$\pi_*$-\'etale.

The functor
\[
    \pi_*:\mathcal C\longrightarrow
    (\CAlg^{\heartsuit}_{\pi_*R})_{/\pi_*C}
\]
is lax $\mathbb E_{k-1}$-monoidal.  Form the fiber product of
$\infty$-operads
\[
    (\mathcal C')^{\otimes}
    :=
    \mathcal C^{\otimes}
    \times_{((\CAlg^{\heartsuit}_{\pi_*R})_{/\pi_*C})^{\otimes}}
    \Fun^{\mathrm{\acute et}}\!\left(
        \Delta^1,
        (\CAlg^{\heartsuit}_{\pi_*R})_{/\pi_*C}
    \right)^{\otimes},
\]
where the second map is evaluation at the source.  An object of
$\mathcal C'$ is a good$'$ map $B\to C$ together with an \'etale triangle
\[
    \pi_*B\longrightarrow D_*\longrightarrow\pi_*C.
\]
Its tensor product is
\[
\begin{split}
 &(B_0,\pi_*B_0\to D_{0,*})\otimes
   (B_1,\pi_*B_1\to D_{1,*})\\
 &\quad\simeq
 \left(
    B_0\otimes_RB_1,
    \pi_*(B_0\otimes_RB_1)
    \longrightarrow
    D_{0,*}\otimes_{\pi_*B_0}
    \pi_*(B_0\otimes_RB_1)
    \otimes_{\pi_*B_1}D_{1,*}
 \right).
\end{split}
\]
Thus the construction uses the actual homotopy ring of the tensor product
and does not require $\pi_*$ to be strong monoidal.

The assignment
\[
    \theta:\mathcal D\longrightarrow\mathcal C',
    \qquad
    (B\to B')\longmapsto(B,\pi_*B\to\pi_*B')
\]
is $\mathbb E_{k-1}$-monoidal by
Lemma~\ref{lem:graded-etale-tensor}.  We claim that it is an equivalence.
There is a commutative diagram
\[
\begin{tikzcd}
    \mathcal D \ar[rr,"\theta"] \ar[dr,"p"']
        && \mathcal C' \ar[dl,"q"]\\
    & \mathcal C,
\end{tikzcd}
\]
where $p$ and $q$ are coCartesian fibrations and $\theta$ carries
$p$-coCartesian morphisms to $q$-coCartesian morphisms.  Over a fixed
good$'$ map $B\to C$, the induced functor on fibers is
\[
    \Alg_{\mathbb E_1}(\Mod_R)^{\pi_*\text{-\'et}}_{B//C}
    \longrightarrow
    \left(\CAlg^{\heartsuit}_{\pi_*B}\right)^{\mathrm{\acute et}}_{/\pi_*C},
\]
which is an equivalence by the case $k=1$ of Theorem~\ref{k+1}.  Hence
$\theta$ is an equivalence on underlying categories by
\cite[Corollary~2.4.4.4]{HTT}, and therefore an equivalence of
$\mathbb E_{k-1}$-monoidal categories by
\cite[Remark~2.1.3.8]{HA}.

\begin{proposition}[Alternative proof of Theorem~\ref{k+1}]
\label{prop:good-prime-dunn-proof}
    For $k\geq2$, Theorem~\ref{k+1} follows from its case $k=1$ by Dunn
    additivity.
\end{proposition}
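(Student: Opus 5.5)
Following Lurie's proof of \cite[Theorem~7.5.4.2]{HA}, I will deduce the case $k\geq2$ from the equivalence $\theta:\mathcal D\xrightarrow{\simeq}\mathcal C'$ of $\mathbb E_{k-1}$-monoidal categories constructed above. The steps, in order, are as follows.

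First, apply $\Alg_{\mathbb E_{k-1}}(-)$ to $\theta$. This gives an equivalence
\[
\Alg_{\mathbb E_{k-1}}(\mathcal D)\simeq\Alg_{\mathbb E_{k-1}}(\mathcal C').
\]
Next, identify both sides using Dunn additivity \cite[5.1.2.2]{HA}, i.e.\ $\Alg_{\mathbb E_{k-1}}(\Alg_{\mathbb E_1}(\Mod_R))\simeq\Alg_{\mathbb E_k}(\Mod_R)$. Since evaluation at source and target is monoidal, an $\mathbb E_{k-1}$-algebra in $\mathcal D$ is a morphism $A\to B$ of $\mathbb E_k$-$R$-algebras over $C$ that is $\pi_*$-\'etale. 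The source is automatically good$'$ by Lemma~\ref{lem:good-prime-properties}(1), and so is the target. Thus
\[
\Alg_{\mathbb E_{k-1}}(\mathcal D)\simeq\Fun^{\pi_*\text{-\'et}}\!\bigl(\Delta^1,\Alg_{\mathbb E_k}(\Mod_R)_{/C}\bigr).
\]
Similarly, an $\mathbb E_{k-1}$-algebra in $\mathcal C'$ is an $\mathbb E_k$-$R$-algebra $A$ over $C$, together with an $\mathbb E_{k-1}$-algebra in the fiber of $\Fun^{\text{\'et}}(\Delta^1,(\CAlg^{\heartsuit}_{\pi_*R})_{/\pi_*C})$ over $\pi_*A$. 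The target here is an ordinary $1$-category of commutative objects in which the tensor product is the cocartesian coproduct. By \cite[2.4.3.9]{HA}, algebras there are therefore just objects, so this fiber is the category of \'etale triangles $\pi_*A\to D_*\to\pi_*C$.

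Then take the fiber of both sides over a fixed $\mathbb E_k$-algebra $A\to C$, using evaluation at the source, which is compatible with $\theta$. This yields
\[
\Alg_{\mathbb E_k}(\Mod_R)^{\pi_*\text{-\'et}}_{A//C}\simeq\bigl(\CAlg^{\heartsuit}_{\pi_*A}\bigr)^{\text{\'et}}_{/\pi_*C}.
\]
Finally, check that this composite is induced by $\pi_*$, which holds because $\theta$ sends $B\to B'$ to $\pi_*B\to\pi_*B'$.

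The main obstacle is the algebra-level description of $\mathcal C'$. The lax monoidal structure uses the actual homotopy ring $\pi_*(B_0\otimes_RB_1)$ rather than a strong monoidal $\pi_*$. So I must verify that an $\mathbb E_{k-1}$-algebra structure on $(A,\pi_*A\to D_*)$ carries no extra data beyond the \'etale triangle, i.e.\ that the structure maps
\[
D_*\otimes_{\pi_*A}\pi_*(A\otimes_RA)\otimes_{\pi_*A}D_*\longrightarrow D_*
\]
are uniquely determined over $\pi_*(A\otimes_RA)\to\pi_*A$. My first approach is to use that the fibration $\mathcal C'\to\mathcal C$ is coCartesian with fibers equivalent to $1$-categories of \'etale algebras. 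In those fibers the relative tensor product is a coproduct, so algebra structures over a fixed algebra in $\mathcal C$ form a contractible space, and \cite[Corollary~2.4.4.4]{HTT} can be applied fiberwise again.
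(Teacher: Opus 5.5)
Your proposal is correct and follows essentially the same route as the paper. You apply $\Alg_{\mathbb E_{k-1}}$ to the monoidal equivalence $\theta$, identify $\Alg_{\mathbb E_{k-1}}(\mathcal D)$ via Dunn additivity and Lemma~\ref{lem:good-prime-properties}(1), describe $\Alg_{\mathbb E_{k-1}}(\mathcal C')$ as a fiber product of algebra categories whose cocartesian algebraic factors contribute only their objects, and take fibers over $A\to C$. The worry in your last paragraph is already settled by that fiber-product description (algebras in a fiber product of $\infty$-operads form the fiber product of the algebra categories), which is exactly how the paper handles it, so the additional fiberwise argument is not needed.
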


\begin{proof}
    Dunn additivity, Lemma~\ref{lem:good-prime-properties}(1), the monoidal
    equivalence $\theta$, and the coCartesian monoidal structures on the
    algebraic categories give the following sequence of equivalences:
    \[
    \begin{aligned}
    &\Fun^{\pi_*\text{-\'et}}\!\left(
        \Delta^1,
        \bigl(\Alg_{\mathbb E_k}(\Mod_R)\bigr)_{/C}
    \right)\\
    &\quad\simeq \Alg_{\mathbb E_{k-1}}(\mathcal D)\\
    &\quad\simeq \Alg_{\mathbb E_{k-1}}(\mathcal C')\\
    &\quad\simeq
    \Alg_{\mathbb E_{k-1}}(\mathcal C)
    \times_{\Alg_{\mathbb E_{k-1}}((\CAlg^{\heartsuit}_{\pi_*R})_{/\pi_*C})}
    \Alg_{\mathbb E_{k-1}}\!\left(
        \Fun^{\mathrm{\acute et}}\!\left(
            \Delta^1,
            (\CAlg^{\heartsuit}_{\pi_*R})_{/\pi_*C}
        \right)
    \right)\\
    &\quad\simeq
    \bigl(\Alg_{\mathbb E_k}(\Mod_R)\bigr)_{/C}
    \times_{(\CAlg^{\heartsuit}_{\pi_*R})_{/\pi_*C}}
    \Fun^{\mathrm{\acute et}}\!\left(
        \Delta^1,
        (\CAlg^{\heartsuit}_{\pi_*R})_{/\pi_*C}
    \right).
    \end{aligned}
    \]
    Taking the fiber over $A\to C$ identifies the left-hand side with
    $\Alg_{\mathbb E_k}(\Mod_R)^{\pi_*\text{-\'et}}_{A//C}$ and the
    right-hand side with
    $\left(\CAlg^{\heartsuit}_{\pi_*A}\right)^{\mathrm{\acute et}}_{/\pi_*C}$.
\end{proof}

\begin{rmk}
    This proof uses the associative case of Theorem~\ref{k+1}, whereas the
    proof in the body treats all $k$ uniformly. The reason for this is that in an early draft of the paper, the author only proved the associative case by the obstruction-theoretic method, and then used Dunn additivity to bootstrap to all $k$.  The author later realized that the obstruction-theoretic method works for all $k$ simultaneously, but left this appendix in place as an alternative argument. 
\end{rmk}

\section*{Acknowledgements}
This paper is a revised and polished version of the author's master's thesis, written under the supervision of Robert Wayne Burklund, whom the author thanks for his guidance, support, and many helpful conversations. The author also thanks Pengkun Huang, Andreas Studsgaard, Peixiang Tan, and Sophus Willumsgaard for useful discussions and encouragement. The author is especially grateful to Hongxiang Zhao for carefully reading the first draft and suggesting revisions.

\bibliographystyle{alphaurl}
\bibliography{refs}
\end{document}